
\documentstyle[amsfonts, amssymb, latexsym, 12pt]{amsart}



\newtheorem{theorem}{Theorem}[section]
\newtheorem{lemma}[theorem]{Lemma}
\newtheorem{proposition}[theorem]{Proposition}

\newtheorem{definition}[theorem]{Definition}
\textwidth16cm
\topmargin1cm
\oddsidemargin0cm
\evensidemargin0cm
\textheight21.1cm
\def\C{{\mbox{\rm\kern.24em
\vrule width.03em height1.43ex depth-.052ex \kern-.26em C}}}
\def\QSet{\mbox{\rm\kern.24em
\vrule width.03em height1.48ex depth-.051ex \kern-.26em Q}}
\def\Z{{\mbox{\rm\kern.25em
\vrule width.03em height0.57ex depth0ex
\kern.033em
\vrule width.03em height1.52ex depth-0.96ex \kern-.338em Z}}}

\def\R{{\mbox{\rm I\kern-.22em R}}}
\def\N{{\mbox{\rm I\kern-.22em N}}}
\def\P{{\bf P}}
\def\Q{{\bf Q}}
\def\\D{{\bf D}}
\def\T{{\bf T}}
\def\P{{\bf P}}
\def\supp{{\rm supp}}
\def\size{{\rm size}}

\def\energy{{\rm energy}}

\def\n{{\bf n}}
\def\s{{\bf s}}

\def\A{{\cal{A}}}
\def\D{{\cal{D}}}

\def\I{{\mathbb{I}}}
\def\C{{\cal{C}}}

\def\l{{\ell}}

\def\dist{{\rm dist}}

\def\111{\gamma}

\def\be{\begin{equation}\label}
\def\ee{\end{equation}}
\def\ban{\begin{align}}
\def\ean{\end{align}}
\def\beq{\begin{eqnarray*}}
\def\eeq{\end{eqnarray*}}
\def\bi{\begin{itemize}}
\def\ei{\end{itemize}}
\newenvironment{proof}{\noindent {\bf Proof.} }{\endprf\par}
\def \endprf{\hfill  {\vrule height6pt width6pt depth0pt}\medskip}
\def\emph#1{{\it #1}}

\title[Iterated trilinear Fourier integrals with arbitrary symbols]{\;\;\;\;\;\; Iterated trilinear Fourier integrals with\;\;\;\;\; arbitrary symbols}

\author{Joeun Jung}
\address{Department of Mathematics, Cornell University, Ithaca, NY 14853}
\email{joeunj.at.math.cornell.edu}

\begin{document}
\maketitle{}


\begin{abstract}

We prove 
$L^p$ estimates for trilinear multiplier operators with singular symbols. These operators arise in the study of iterated trilinear Fourier integrals, which are trilinear variants of the bilinear Hilbert transform. Specifically, we consider trilinear operators determined by multipliers that are products of two functions ${{m}}_1(\xi_1, \xi_2)$ and ${m}_2(\xi_2, \xi_3)$, such that the singular set of $m_1$ lies in the hyperplane $ \xi_1=\xi_2$ and that of $m_2$ lies in the hyperplane $\xi_2=\xi_3$. While previous work \cite{MTT2} requires that the multipliers satisfy $\chi_{\xi_1 <\xi_2} \cdot \chi_{\xi_2<\xi_3}$, our results allow for the case of the arbitrary multipliers, which have common singularities.
\end{abstract}


\section{Background}

$L^p$ estimates for multilinear singular operators and their connections to other fields such as partial differential equations, ergodic theory, and probability, have been the focus of a great deal of mathematical activity in recent years.

In 1979, Coifman and Meyer \cite{CM} investigated the bilinear operators defined by:
 \begin{equation}
B(f_1, f_2)(x) := \int_{\scriptsize{\R^2}}
m(\xi_1, \xi_2)
 \widehat{f_1}(\xi_1) 
\widehat{f_2}(\xi_2)
e^{2\pi i x(\xi_1 + \xi_2)}
d\xi_1
d\xi_2,
 \end{equation}\label{1}
where the multiplier $m$ satisfies the classical Marcinkiewicz-Mikhlin-H\"ormander condition
$$|\partial^{\alpha}(m(\xi))|\lesssim \frac{1}{|\xi|^{|\alpha|}}$$
for sufficiently many multi-indices $\alpha$ and where $\xi :=(\xi_1, \xi_2)$. Here, $\widehat{f}$ denotes the Fourier transform, which is defined by $ \widehat{f}(\eta) =\int_{\scriptsize{\R}} e^{-2\pi i x \eta} f(x)dx$, and $A\lesssim B$ denotes the assertion that $A\leq CB$ for some large constant $C$. Specifically, they obtained the following results:

\begin{theorem}
The operator $B$ maps $L^{p_1} \times L^{p_2} \rightarrow L^{p_3}$, provided $1<p_1,p_2\leq \infty$ and $0<p_3<\infty$, where $1/{p_1}+1/{p_2} =1/{p_3}$.
\end{theorem}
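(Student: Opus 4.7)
The plan is to reduce the theorem to known paraproduct estimates via a Littlewood-Paley decomposition of the symbol $m$ around its singularity at the origin, and then to handle the quasi-Banach range $p_3<1$ by a direct vector-valued argument rather than by duality.

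First I would fix a smooth radial $\psi$ with $\psi$ supported in an annulus $\{1/2\leq|\xi|\leq 2\}$ and $\sum_{k\in\Z}\psi(\xi/2^k)=1$ for $\xi\neq 0$, and decompose
\[
m(\xi_1,\xi_2)=\sum_{k\in\Z} m_k(\xi_1,\xi_2),\qquad m_k(\xi):=m(\xi)\,\psi(\xi/2^k).
\]
Each $m_k$ is smooth, supported in a box of side $\sim 2^k$, and the Mikhlin hypothesis gives $\|\partial^\alpha m_k\|_\infty\lesssim 2^{-k|\alpha|}$ uniformly in $k$. Expanding $m_k$ in a Fourier series on a slightly larger cube produces a representation
\[
m_k(\xi_1,\xi_2)=\sum_{n_1,n_2\in\Z^2}c^{k}_{n_1,n_2}\,\phi^{k}_{n_1}(\xi_1)\,\phi^{k}_{n_2}(\xi_2),
\]
with coefficients $c^{k}_{n_1,n_2}$ decaying faster than any polynomial in $(n_1,n_2)$, and $\phi^{k}_{n_j}$ bumps adapted to intervals of length $\sim 2^k$.

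Next I would sort the resulting trilinear (after pairing with a dual function $f_3$) sum by frequency type. By the constraint $|\xi_1+\xi_2|\lesssim 2^k$ together with $|\xi|\sim 2^k$, for each $k$ at least two of the three frequencies $\xi_1,\xi_2,\xi_1+\xi_2$ are of size $\sim 2^k$ while the third is $\lesssim 2^k$. Up to the rapidly decaying coefficients $c^k_{n_1,n_2}$ and a symmetry, the operator is therefore a finite sum of discrete paraproducts of the model form
\[
\Pi(f_1,f_2)(x)=\sum_k (f_1*\check{\phi}^k_0)(x)\,(f_2*\check\psi_k)(x),
\]
where one factor carries a high-frequency Littlewood-Paley projection and the other a low-frequency averaging projection. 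The crux is then the boundedness of $\Pi:L^{p_1}\times L^{p_2}\to L^{p_3}$ in the stated range, together with summation in $k$ via the Fefferman-Stein vector-valued maximal inequality and the $L^p$-boundedness of the Littlewood-Paley square function.

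Above the Banach threshold $p_3\geq 1$ one concludes by duality, Hölder, and the Fefferman-Stein inequality applied with exponents $1<p_1,p_2<\infty$, followed by interpolation against the easier $L^\infty$ endpoint. The main obstacle is the range $p_3<1$, where duality is unavailable. I would address this by a good-$\lambda$ / Calder\'on-Zygmund decomposition argument: decompose $f_1,f_2$ at a level associated to the Hardy-Littlewood maximal function, show that the bad parts contribute only on a set of small measure, and estimate the good parts directly by an $L^2$-based bound and the known weak-type endpoints, interpolating multilinearly to recover the full quasi-Banach range. The endpoints $p_1=\infty$ or $p_2=\infty$ are handled by noting that an $L^\infty$ input can be absorbed into the symbol, reducing to a classical linear Calder\'on-Zygmund operator on the remaining factor.
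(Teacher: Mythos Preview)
The paper does not supply a proof of this statement: Theorem~1.1 is quoted as background, attributed to Coifman and Meyer \cite{CM}, and is used only to situate the later results. So there is no ``paper's own proof'' to compare against.

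Your outline is a reasonable sketch of the standard modern proof of the Coifman--Meyer theorem (Littlewood--Paley decomposition of the symbol, Fourier series on each dyadic shell, reduction to model paraproducts, then square-function/maximal-function control). A couple of points would need tightening before this counts as a proof. First, a notational slip: the Fourier series indices $n_1,n_2$ should run over $\Z$, not $\Z^2$. Second, the treatment of the quasi-Banach range $1/2<p_3<1$ is vague: a good-$\lambda$ argument alone does not straightforwardly give the full range, and the cleaner route is either the multilinear Calder\'on--Zygmund theory of Grafakos--Torres (weak endpoint $L^1\times L^1\to L^{1/2,\infty}$ via simultaneous Calder\'on--Zygmund decomposition of both inputs, then multilinear Marcinkiewicz interpolation), or the direct pointwise domination of the paraproduct square function by $Mf_1\cdot Sf_2$ followed by H\"older in the quasi-Banach range. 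Third, the $L^\infty$ endpoint is not literally a reduction to a linear multiplier: freezing $f_1\in L^\infty$ does not produce a symbol in $\xi_2$ alone, since the resulting kernel depends on $x$ through $f_1$. One instead checks that the frozen operator is Calder\'on--Zygmund with constants $O(\|f_1\|_\infty)$, or argues via Carleson measures. None of these are fatal, but as written the proposal is an outline rather than a proof.
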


If $m$ is identically equal to one, then the operator $B$ is the pointwise product operator, so the wide range of $L^p$ estimates for the operators $B$ serves as a generalization of H\"older's inequality in which products are replaced by classical paraproducts, which satisfies the classical Marcinkiewicz-Mikhlin-H\"ormander condition. Theorem 1.1 is critical to understanding the general Korteweg-de Vries equation, which describes weakly non-linear shallow water waves, and the Leibnitz rule, which refers to inequalities of the type
 \begin{equation}
 \| D^\alpha(f g)\|_p \lesssim \| D^\alpha f\|_{p_1}  
 \|g\|_{q_1} 
 +\|  f\|_{p_2}
 \| D^\alpha g\|_{q_2}
 \end{equation}
 where $1<p_i,q_i \leq \infty, \; 1/{p_i}+1/{q_i} =1/p$ for $i=1,2$ and $1/{(1+\alpha)}<p<\infty$.


If the multiplier $m$ in (\ref{1}) is replaced with $\chi_{ \xi_1 <\xi_2}$, which has discontinuities along the line $\{\xi_1 =\xi_2\}$, one establishes the bilinear Hilbert transform(BHT) given by the equation
\begin{equation}\label{BHT}
BHT(f_1, f_2)(x) := \int_{\xi_1 <\xi_2}
 \widehat{f_1}(\xi_1) 
\widehat{f_2}(\xi_2)
e^{2\pi i x(\xi_1 + \xi_2)}
d\xi_1
d\xi_2, 
\end{equation}
modulo minor modification, where $f_1, f_2$ are test functions on $\R$.
The bilinear Hilbert transform was introduced by Calder\'on when he understood the first Calder\'on commutator in connection to the Cauchy integral on Lipschitz curves. Lacey and Thiele, \cite{LT1} in 1997 and \cite{LT2} in 1999,
 provide a wide range of $L^p$ estimates for the bilinear Hilbert transform. Specifically, they announced the following theorem: 
\begin{theorem}
The operator $BHT$ maps $L^{p_1} \times L^{p_2} \rightarrow L^{p_3}$, provided $1<p_1,p_2\leq \infty$ and $2/3<p_3<\infty$, where $1/{p_1}+1/{p_2} =1/{p_3}$.
\end{theorem}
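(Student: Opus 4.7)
The plan is to follow the Lacey-Thiele time-frequency approach of \cite{LT1, LT2}. By duality the desired bounds follow from restricted weak type estimates for the trilinear form
\[
\Lambda(f_1,f_2,f_3) := \int_{\R} BHT(f_1,f_2)(x)\,f_3(x)\,dx,
\]
which, after projecting the integration onto $\{\xi_1+\xi_2+\xi_3=0\}$, is a two-dimensional singular integral whose symbol has codimension-one singularities along the line $\xi_1=\xi_2$. The first step is to resolve the singularity by a smooth Whitney decomposition of the half-plane $\{\xi_1<\xi_2\}$ into cubes of sidelength $\sim 2^{-k}$ at distance $\sim 2^{-k}$ from the line.

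Each Whitney piece, after Fourier expansion of its bump symbol, contributes a model sum indexed by \emph{tri-tiles} $P=(P_1,P_2,P_3)$: triples of time-frequency rectangles of unit area sharing a common spatial interval $I_P$, with frequency intervals $\omega_{P_j}$ separated at the Whitney scale. The form becomes
\[
\Lambda(f_1,f_2,f_3) \approx \sum_{P\in\mathbf{P}} |I_P|^{-1/2}\,\langle f_1,\phi_{P_1}\rangle\,\langle f_2,\phi_{P_2}\rangle\,\langle f_3,\phi_{P_3}\rangle,
\]
where each $\phi_{P_j}$ is an $L^2$-normalized wave packet adapted to $P_j$. Proving Theorem 1.2 reduces to uniform bounds on this discrete model sum.

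The core estimate is a size-energy analysis. For each index $j$, the \emph{size} of a collection of tri-tiles is the supremum over trees $T$ of the normalized $\ell^2$ mass $\bigl(|I_T|^{-1}\sum_{P\in T}|\langle f_j,\phi_{P_j}\rangle|^2\bigr)^{1/2}$, and the \emph{energy} measures how many disjoint maximal trees fit inside the collection. Sizes are controlled pointwise by maximal averages of $f_j$, while energies are controlled by $\|f_j\|_2$ via a quasi-orthogonality argument across disjoint trees. A greedy selection algorithm that extracts maximal trees at geometric size levels reduces the model sum to an abstract inequality of the form
\[
|\Lambda(f_1,f_2,f_3)| \lesssim \prod_{j=1}^{3}\mathrm{size}_j^{\theta_j}\,\mathrm{energy}_j^{1-\theta_j}
\]
for any $(\theta_1,\theta_2,\theta_3)\in(0,1)^3$ with $\sum_j\theta_j=1$.

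The main obstacle is extending this to the full range $p_3>2/3$. For $p_3\ge 1$, restricted weak type estimates with all three test sets fixed follow directly, but for $p_3<1$ no positive dual to $L^{p_3}$ exists and one must adaptively remove from $F_3$ an exceptional set on which a carefully chosen maximal function of $f_1,f_2$ is large, retaining only tri-tiles whose interval $I_P$ lies in the complementary good set. The delicate step is then balancing the free exponents $\theta_j$ in the size-energy inequality against the H\"older constraint $1/p_1+1/p_2=1/p_3$; this balancing, combined with the BMO-type quasi-orthogonality for the energies that must respect the modulation invariance of $BHT$, is what forces the restriction $p_3>2/3$. Once restricted weak type estimates are obtained at the vertex exponents, multilinear Marcinkiewicz interpolation delivers the strong-type bounds.
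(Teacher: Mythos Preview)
The paper does not give its own proof of this statement: Theorem~1.2 is quoted in the Background section as the Lacey--Thiele result, with the proof deferred entirely to \cite{LT1,LT2}. So there is no in-paper argument to compare against. Your sketch is a faithful outline of the Lacey--Thiele approach those references contain --- Whitney decomposition of the symbol, reduction to a model sum over tri-tiles, the size/energy machinery and tree selection, exceptional-set removal for $p_3<1$, and restricted-type interpolation --- and it is consistent with the way the present paper later uses exactly these ingredients (Definitions~7.1--7.3, Proposition~7.6, Lemmas~7.7--7.8, and the restricted weak-type framework of Section~3) when proving its own main theorem. As a proof \emph{sketch} it is correct; to be a complete proof you would of course need to supply the tree-selection lemma and the single-tree estimate in full, but nothing you wrote is wrong or misdirected.
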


Work of Muscalu, Tao, Thiele \cite{MTT2}  in 2001 provides H\"older type $L^p$ estimates on a trilinear variant of the bilinear Hilbert transform given by
 \begin{equation}\label{biest}
 T(f_1, f_2, f_3)(x) := \int_{\xi_1 <\xi_2<\xi_3}
 \widehat{f_1}(\xi_1) 
 \widehat{f_2}(\xi_2)
\widehat{f_3}(\xi_3)
e^{2\pi i x(\xi_1 + \xi_2+\xi_3)}
d\xi_1
d\xi_2
d\xi_3.
 \end{equation}
The operator $T$ may be considered to be the continuous analogue of an iterated Fourier series with a product of three functions. The operator $T$ has deep connections to AKNS systems (named after Ablowitz-Kaup-Newell-Segur), which describe various models in partial differential equations, including the Korteg-de Vries, and non-linear  Schor\"dinger equations. 

\vskip 1cm

 \section{Statement of Results}

We now construct trilinear operators $T_{m_1 m_2}$ with more generic symbols with the same singularity sets as the one of $T$ in (\ref{biest}).  We define such an operator by:
 \begin{equation}\label{mine}
 T_{m_1 m_2}(f_1, f_2,f_3)(x) := \int_{\scriptsize{\R^3}} m_1(\xi_1, \xi_2)m_2(\xi_2, \xi_3)
\widehat{f_1}(\xi_1)
\widehat{f_2}(\xi_2)
\widehat{f_3}(\xi_3)
e^{2\pi i x(\xi_1 + \xi_2+\xi_3)}
d\xi_1
d\xi_2
d\xi_3,
 \end{equation}
where $m_i(\xi_i, \xi_{i+1})$, for each $i=1,2$, is smooth away from the line
 \begin{equation}\label{Gamma}
\Gamma_i=\{(\xi_i, \xi_{i+1})\in \R^2 : \xi_i=\xi_{i+1}\}
\end{equation}
and satisfies the condition
$$|\partial^{\alpha}(m_i(\xi))|\lesssim \frac{1}{\dist(\Gamma_i,  \xi)^{|\alpha|}}$$
for every $\xi \in \R^2\setminus \Gamma_i$ and sufficiently many multi-indices $\alpha$. 

 If we consider the singular symbol $\chi_{\xi_1 <\xi_2<\xi_3}$ of $T$ as described in (\ref{biest}) as $\chi_{\xi_1 <\xi_2}\cdot\chi_{\xi_2<\xi_3}$, then it is easy to see that the symbol $ m_1(\xi_1, \xi_2)\cdot m_2(\xi_2, \xi_3)$  is a natural variant of $\chi_{\xi_1 <\xi_2}\cdot\chi_{\xi_2<\xi_3}$ with the same singular sets along two hyperplanes, $\xi_1=\xi_2$ and $ \xi_2=\xi_3$.  A simple example for this symbol is $a(\xi_1-\xi_2)\cdot b(\xi_2-\xi_3)$ for classical Marcinkiewicz-Mikhlin-H\"ormander symbols $a$ and $b$. If we replace both $ m_1(\xi_1, \xi_2)$ and $m_2(\xi_2, \xi_3)$ with classical symbols $a(\xi_1, \xi_2)$ and $b(\xi_2, \xi_3)$ satisfying the classical Marcinkiewicz-Mikhlin-H\"ormander condition, then 
 we obtain an example of flag paraproducts 
 $$T_{ab}(f_1, f_2, f_3)(x):= \int_{\scriptsize{\R^3}} a(\xi_1, \xi_2) b(\xi_2, \xi_3)
 \widehat{f_1}(\xi_1) 
 \widehat{f_2}(\xi_2)
\widehat{f_3}(\xi_3)
e^{2\pi i x(\xi_1 + \xi_2+\xi_3)}
d\xi_1
d\xi_2
d\xi_3,
$$ 
 which were introduced by Muscalu \cite{MC} in 2007, who proved a variety of H\"older type $L^p$ estimates for this family. Flag paraproducts arise naturally in many settings, including non-linear partial differential equations and probability theory. In fact, we may consider the operator $T_{m_1 m_2}$ as an infinite sum of flag paraproducts. This is why we need to introduce tiles and wave packets associated to tiles, which are discussed later, as is done in the proof in \cite{LT1} of $L^p$ estimates for the bilinear Hilbert transform, in order to overcome this complexity.
  Let us consider the 3-dimensional affine hyperspace
$$S := \{(\alpha_1, \alpha_2, \alpha_3, \alpha_4) \in \R^4 |\  \alpha_1 + \alpha_2 + \alpha_3 + \alpha_4 = 1\}.$$

We denote by $\\D'$ the open interior of the convex hull of the following twelve extremal points $A_1, \cdots, A_{12},$ which belong to $S$: 
\begin{align}
&A_1 =(1, 1/2, 1, -3/2) \; \;   A_2 =(1/2, 1, 1, -3/2) \; \;   A_3=(1/2,1,-3/2,1) \nonumber\\
&A_4=(1,1/2,-3/2, 1) \; \;  A_5=(1, -1/2, 0, 1/2) \; \;   A_6=(1, -1/2, 1/2, 0)\nonumber\\
& A_7=(1/2,-1/2,0,1)\; \;  A_8=(1/2,-1/2,1, 0) \; \;   A_9 = (-1/2, 1, 0, 1/2) \nonumber\\
&  A_{10} =(-1/2, 1, 1/2, 0)\; \;   A_{11}=(-1/2,1/2,1,0)\; \; A_{12}=(-1/2,1/2,0, 1).\nonumber
 \end{align}

In addition, we denote by $\\D''$ the open interior of the convex hull of the twelve extremal points, $\tilde{A}_1, \cdots, \tilde{A}_{12},$ in $S$, where the points $\tilde{A}_j$ are given by exchanging the 1st coordinate and the 3rd coordinate of $A_j$ for $j=1,\dots, 12$.
Then, we set $\\D := \\D' \cap \\D''$. 

\begin{theorem}
The operator $T$ maps $L^{p_1} \times L^{p_2} \times L^{p_3} \rightarrow L^{p_4}$, provided
$(1/{p_1}, 1/{p_2},1/{p_3},1-1/{p_4}) \in \\D$ where $1<p_1,p_2, p_3 \leq \infty$ and $0<p_4<\infty$.
\end{theorem}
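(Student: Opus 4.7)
The plan is to carry over the time-frequency analysis of Lacey--Thiele \cite{LT1} as extended by Muscalu--Tao--Thiele \cite{MTT2} and by Muscalu \cite{MC} for flag paraproducts. The first step is to exploit the hypothesis on each $m_i$ to obtain a discrete wave-packet representation. Since $m_i$ is smooth away from $\Gamma_i$ and satisfies Mikhlin-type bounds transverse to $\Gamma_i$, a Whitney decomposition of $\R^2\setminus\Gamma_i$ together with a Fourier series on each Whitney rectangle yields, up to rapidly decaying tails, an expansion
\begin{equation*}
m_i(\xi_i,\xi_{i+1}) \;=\; \sum_{P\in\mathbb{P}_i} c_{i,P}\,\widehat{\phi_{P,1}}(\xi_i)\,\widehat{\phi_{P,2}}(\xi_{i+1}),
\end{equation*}
with $|c_{i,P}|\lesssim 1$, where $P$ runs over a collection of bi-tiles and each $\widehat{\phi_{P,j}}$ is a bump adapted to a dyadic frequency interval on one side of $\Gamma_i$. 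Substituting the two expansions into (\ref{mine}) and pairing against a test function $f_4$ by duality reduces matters to uniform bounds for a $4$-linear form given by a double sum over $P_1\in\mathbb{P}_1$ and $P_2\in\mathbb{P}_2$, coupled by the fact that wave packets from both families act on the middle input $f_2$ at frequency $\xi_2$.

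After the standard modulation-dilation symmetries and a finitary reduction, the $4$-linear form can be reorganized as a sum over rank-$1$ families of ``generalized $5$-tri-tiles'' produced by matching each $P_1$ with each $P_2$ through their common $\xi_2$-factor, essentially exhibiting $T_{m_1 m_2}$ as an infinite superposition of flag paraproducts in the sense of \cite{MC}. For each such family I would run the standard Lacey--Thiele--Muscalu stopping-time argument: sort the tri-tiles according to size and energy parameters associated to each input $f_j$, extract ``top trees'' at each scale, invoke a John--Nirenberg-type size lemma together with a Bessel-type energy lemma, and sum a telescoping geometric series. Restricted weak-type estimates at each of the twelve vertices $A_1,\dots,A_{12}$ of $\\D'$, combined with multilinear interpolation, give the full range $\\D'$. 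The symmetric argument in which the roles of $f_1$ and $f_3$ are exchanged (reflecting which of the two outer inputs is placed in the exceptional set during the stopping-time decomposition) yields the polytope $\\D''$, and intersecting produces the desired $\\D=\\D'\cap\\D''$.

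The main obstacle will be the shared middle frequency. In \cite{MTT2} the explicit symbol $\chi_{\xi_1<\xi_2}\cdot\chi_{\xi_2<\xi_3}$ automatically enforces a compatible ordering of the $\xi_2$-scales coming from the two factors, and one ends up with essentially decoupled BHT-type pieces; here $m_1$ and $m_2$ are arbitrary, so wave packets from the two decompositions may coincide or overlap at the same $\xi_2$-scale. The key new technical step is therefore a \emph{coupled size} estimate controlling sums such as
\begin{equation*}
\sum_{P_1,P_2} |c_{1,P_1}|\,|c_{2,P_2}|\,|\langle f_2,\phi_{P_1,2}\rangle|\,|\langle f_2,\phi_{P_2,1}\rangle|
\end{equation*}
uniformly over coupled tile collections, with the correct size/energy scaling to feed into the stopping-time scheme. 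Establishing this coupled size estimate, and arranging the two-parameter stopping-time extraction so that the resulting geometric series converges precisely on the polytope $\\D$, is the crux of the argument; once in place, the remainder combines the techniques of \cite{LT1}, \cite{MTT2} and \cite{MC} in an essentially standard way.
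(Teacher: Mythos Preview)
First, a clarification: the stated theorem concerns the Biest operator $T$ of equation~(\ref{biest}) with the characteristic-function symbol $\chi_{\xi_1<\xi_2<\xi_3}$, which the paper attributes to Muscalu--Tao--Thiele \cite{MTT2} and does not itself prove. Your proposal is evidently aimed at Theorem~\ref{main theorem} concerning the general operator $T_{m_1m_2}$, so I compare it against the paper's proof of that result.

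Your outline correctly identifies the starting point (Whitney decomposition plus Fourier series on each $m_i$) and the endgame (size/energy, trees, restricted weak-type at the twelve vertices, multilinear interpolation). The genuine gap is the middle. You name a ``coupled size estimate'' as the crux but give no mechanism for producing it, and the sum you display --- involving two separate inner products $\langle f_2,\phi_{P_1,2}\rangle$ and $\langle f_2,\phi_{P_2,1}\rangle$ --- does not actually arise: $f_2$ appears once in (\ref{mine}), so after inserting the two Whitney--Fourier expansions the coupling is a \emph{product} $\widehat{\phi}_{Q,2}(\xi_2)\,\widehat{\phi}_{Q',1}(\xi_2)$ of two bumps at possibly very different scales hitting a single $\widehat{f_2}(\xi_2)$. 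That is precisely why the trick from \cite{MTT2}, of rewriting $\chi_{\xi_2<\xi_3}$ as $\chi_{(\xi_1+\xi_2)/2<\xi_3}$, is unavailable for generic $m_1,m_2$, as the paper emphasizes.

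The paper's resolution is different from anything in your sketch. After splitting into the three regions $|\xi_1-\xi_2|\ll,\sim,\gg|\xi_2-\xi_3|$ (the diagonal region is handled by \cite{MTT4}), the key new move in the region $|\xi_1-\xi_2|\ll|\xi_2-\xi_3|$ is to \emph{Taylor-expand} the coarse-scale bump $\phi_{Q'_1,\n,1}(\xi_2)$ about the midpoint $(\xi_1+\xi_2)/2$ to order $M$. The zeroth-order term reproduces exactly the \cite{MTT2} structure; the $\ell$-th order term carries a gain $2^{-(\#+1)\ell}$, where $2^\#\sim|Q'|/|Q|$ is the scale ratio, and this beats the $2^{\#/2}$ loss that the paper shows is unavoidable in the size and energy estimates for $a^{(3),\#}_{Q_3}$ under the additional constraint $2^\#|\omega_{Q_3}|\sim|\omega_{P_1}|$ (Lemmas~8.2 and~8.4). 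The order-$M$ remainder is then absorbed into a trilinear Coifman--Meyer symbol. Without this Taylor-expansion decoupling your coupled-size program has no source of summability in the scale-ratio parameter $\#$, and the argument does not close. Note also that the $\\D'$/$\\D''$ dichotomy in the paper comes from the two off-diagonal regions ($T_\I$ versus $T_{\I\I\I}$), not merely from choosing which function to place in the exceptional set.
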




 In particular, $T$ maps $L^{p_1} \times L^{p_2} \times L^{p_3} \rightarrow L^{p_4}$ as long as $1 < p_1, p_2, p_3 \leq \infty$ and $1\leq p_4 <\infty$.

One of the essential ideas in \cite{MTT2} is to consider the symbol 
$\chi_{\xi_1 <\xi_2}\cdot\chi_{\xi_2<\xi_3}$ as $\chi_{\xi_1 <\xi_2}\cdot\chi_{{\frac{\xi_1+\xi_2}{2}}<\xi_3}$
 in the region $\{|\xi_3- \xi_2| \gg |\xi_2- \xi_1|\}$,
  and, similarly, to consider
$\chi_{\xi_1 <\xi_2}\cdot\chi_{\xi_2<\xi_3}$ as $\chi_{\xi_1<{\frac{\xi_2+\xi_3}{2}}}\cdot \chi_{\xi_2 <\xi_3}$
 in the region  $\{|\xi_3- \xi_2| \ll |\xi_2- \xi_1|\}$. This observation enabled the authors to decompose the multiplier operator corresponding to each region with a composition of two bilinear Hilbert transforms with some constraint on the inner bilinear Hilbert transform. This decomposition enabled the authors to establish its $L^p$ estimates by applying the idea from the corresponding proof for the bilinear Hilbert transform. However, we cannot apply this technique to $ T_{m_1 m_2}$, because the symbols $m_1(\xi_1, \xi_2) \cdot m_2(\xi_2, \xi_3)$ are truly dependent on its variables, so we cannot rely on the benefit from the characteristic functions; This makes the whole proof here is more challenging.



 The main purpose of this paper is to obtain a large set of $L^p$ estimates for $T_{m_1 m_2}$. Specifically, we obtain the following result:

\begin{theorem}\label{main theorem}
The operator  $T_{m_1 m_2}$ maps 
\begin{equation}\label{main}
T_{m_1 m_2} :L^{p_1} \times L^{p_2} \times L^{p_3} \rightarrow L^{p_4},
\end{equation}
 provided
$(1/{p_1}, 1/{p_2},1/{p_3},1-1/{p_4}) \in \\D$ where $1<p_1,p_2, p_3 \leq \infty$, $0<p_4<\infty$, and $1/{p_1}+1/{p_2}+1/{p_3} =1/{p_4}$.
\end{theorem}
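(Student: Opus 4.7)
The plan is to follow the time-frequency paradigm developed for the bilinear Hilbert transform in \cite{LT1,LT2} and adapted to iterated operators in \cite{MTT2}, but rebuilt so that it tolerates symbols $m_1,m_2$ that are merely of Marcinkiewicz--Mikhlin--H\"ormander type near their singular lines rather than characteristic functions. First I would dualize and study the quadrilinear form $\form(f_1,f_2,f_3,f_4) = \langle T_{m_1 m_2}(f_1,f_2,f_3), f_4\rangle$, which unfolds to an integral on the hyperplane $\xi_1+\xi_2+\xi_3+\xi_4=0$ carrying the coupled symbol $m_1(\xi_1,\xi_2)\,m_2(\xi_2,\xi_3)$. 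By the standard reduction this quadrilinear form suffices for (\ref{main}), and the endpoint/interpolation theory of Muscalu--Tao--Thiele reduces matters to proving generalized restricted weak-type estimates at the extreme points $A_1,\dots,A_{12}$ and $\tilde A_1,\dots,\tilde A_{12}$ spanning $\\D$.

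Next I would decompose each $m_i$ via a Whitney-type partition of unity adapted to the dyadic distance from $\Gamma_i$, and then expand each Whitney piece as a rapidly converging double Fourier series on the surrounding cube. This turns $m_i$ into a rapidly decaying superposition of tensorized bump functions associated to pairs of frequency intervals, which, after absorbing the $e^{2\pi i x(\xi_i+\xi_{i+1})}$ factors, produces wave packets adapted to bi-tiles $P_i = (I_{P_i}, \omega_{P_i,1}, \omega_{P_i,2})$. The coupled structure $m_1\cdot m_2$ then expresses $\form$ as an absolutely convergent sum over pairs $(P_1,P_2)$ of bi-tiles in which the middle frequency tiles $\omega_{P_1,2}$ and $\omega_{P_2,1}$ are nested (they both live above $\xi_2$). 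This pairing is exactly what forces one to work with tri-tiles coupled through a common middle frequency component, a mechanism analogous to, but genuinely more intricate than, the bi-tile structure used for $BHT$ or for the flag paraproducts in \cite{MC}.

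I would then prove $\form$ estimates by introducing size and energy functionals tailored to these coupled tri-tile collections: a size measuring the normalized $L^{2}$-mass of a family restricted to trees, and an energy measuring the maximal $\ell^2$-sum of coefficients over antichains. Single-tree estimates, obtained by factoring the packets in a tree through a BMO/John--Nirenberg argument as in \cite{LT1,MTT2}, would bound the tree contribution by a product of four size- or energy-type quantities. A stopping-time selection organizes the whole collection into a union of trees whose sizes decay geometrically, so that summing the tree estimates and matching exponents recovers the sought generalized restricted weak-type bounds at each extreme point of $\\D$; interpolation then yields Theorem \ref{main theorem}.

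The main obstacle, as highlighted in the introduction, is that $m_1$ and $m_2$ are truly bivariate: in \cite{MTT2} one could rewrite $\chi_{\xi_1<\xi_2}\chi_{\xi_2<\xi_3}$ as a composition of two $BHT$-type symbols with a harmless middle constraint, collapsing the analysis onto an outer $BHT$ applied to an inner $BHT$. Here no such rewriting is available, so one cannot peel off an outer operator and must estimate the quadrilinear form directly. Concretely, this means the single-tree estimate must simultaneously exploit orthogonality from both the $m_1$-packets (localized near $\xi_1=\xi_2$) and the $m_2$-packets (localized near $\xi_2=\xi_3$), while respecting the fact that their middle frequency tiles are correlated rather than independent. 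Constructing sizes and energies that are additive over such coupled tri-tile families, and matching them to the interpolation geometry of $\\D$, is the step I expect to require the most care.
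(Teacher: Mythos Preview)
Your high-level outline---dualize to the quadrilinear form, reduce via restricted weak-type interpolation to the extreme points of $\\D$, Whitney-decompose each $m_i$ and expand in double Fourier series, then run a size/energy argument on a discrete tri-tile model---matches the paper's architecture. The gap is at the passage from the Fourier-series expansion to a workable discrete model. After the expansion, in the dominant region $|Q|\ll|Q'|$ the outer piece is $\phi_{Q'_1}(\xi_2)\phi_{Q'_2}(\xi_3)$, which depends on $\xi_2$; but for an outer tri-tile collection $\vec\P$ to carry the analysis, its first frequency coordinate must be $\xi_1+\xi_2$, the Fourier variable of the inner bilinear output. In \cite{MTT2} this was free because $\chi_{\xi_2<\xi_3}=\chi_{(\xi_1+\xi_2)/2<\xi_3}$ on that region; for a generic $m_2$ this identity fails, and your ``nested middle tiles'' description does not by itself produce a model of the shape $\sum_{\vec P}|I_{\vec P}|^{-1/2}\langle B_{P_1}(f_1,f_2),\Phi_{P_1}\rangle\langle f_3,\Phi_{P_2}\rangle\langle f_4,\Phi_{P_3}\rangle$ on which the tree machinery acts.

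The paper closes this gap by Taylor-expanding $\phi_{Q'_1}(\xi_2)$ about $(\xi_1+\xi_2)/2$ to order $M$. The $\ell=0$ term reproduces exactly the biest model of \cite{MTT2}; each term with $\ell\ge 1$ gains a factor $(|Q|/|Q'|)^\ell=2^{-\#\ell}$ but pins the scale ratio, so the discrete form $\Lambda^{\#}_{\vec\P,\vec\Q}$ acquires the extra constraint $2^{\#}|\omega_{Q_3}|\sim|\omega_{P_1}|$. That constraint breaks the $\vec P$/$\vec Q$ decoupling lemma of \cite{MTT2}, and the paper supplies new size and energy estimates for the resulting coefficients $a^{(3),\#}_{Q_3}$ (Section~8) that lose a factor $2^{\#/2}$, which is absorbed by $2^{-\#\ell}$; the Taylor remainder is dispatched as a Coifman--Meyer symbol (Section~6). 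So contrary to your diagnosis, one \emph{does} peel off an outer operator---the point is that doing so for generic $m_2$ requires the Taylor device and forces a scale-linked model whose tile norms need a genuinely new argument, not merely a reorganization of the existing sizes and energies.
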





The notation and techniques used here to prove Theorem 2.2 are inspired by the work of Muscalu, Tao and Thiele in \cite{MTT1} and \cite{MTT2}.

\vskip .5cm


\textbf{ Acknowledgements:}
The author would like to express thanks to her dissertation adviser, Camil Muscalu, for valuable conversations
and his guidance regarding this paper.
\vskip 1cm
\section{Restricted Weak-type Interpolation}
In this section, we will review the restricted weak-type interpolation theorems from \cite{MTT4}. These theorems allow us to reduce multilinear $L^p$ estimates, such as those in Theorem 2.2, to certain restricted weak-type estimates.


 To prove the $L^p$ estimates on $T_{m_1m_2}$ it is convenient to use duality to convert (\ref{main})
 from a trilinear operator estimate to a quadrilinear form estimate because this makes the estimate more
symmetric. We shall investigate the following quadrilinear form $\Lambda$ associated to $T_{m_1m_2}$:
 $$\Lambda(f_1, f_2, f_3, f_4) :=\int_{\scriptsize{\R}} T_{m_1m_2}(f_1, f_2, f_3)(x)f_4(x)dx.$$
The assertion that the operator $T_{m_1m_2}$ maps from $L^{p_1} \times L^{p_2} \times L^{p_3}$ to $L^{{p_4}}$ is equivalent to the assertion that $\Lambda$ is bounded on $L^{p_1} \times L^{p_2} \times L^{p_3}\times L^{p'_4}$, for $1<{p_4}<\infty$, where $1/{p_4}+1/{p'_4} =1$. For ${p_4}\leq1$, this simple duality relationship breaks down, but the interpolation arguments in \cite{MTT4} will enable us to reduce the statement in $(\ref{main})$ to certain restricted type estimates on $\Lambda.$ As in \cite{MTT4}, we find it more convenient to work with the quantities $\alpha_i = 1/{p_i}$, $i = 1, 2, 3$ and $\alpha_4 = 1/{p'_4}$, where $p_i$ stands for the exponent of $L^{p_i}$.
 
\begin{definition} A tuple $\alpha= (\alpha_1, \alpha_2, \alpha_3, \alpha_4)$ with $-\infty<\alpha_i <1$, for all $1 \leq i \leq 4$, is called admissible if  $\sum_{i=1}^4 \alpha_i =1$
 and there is at most one index $j$ such that $\alpha_j < 0$. We call an index $i$ good if $\alpha_i \geq 0$, and we call it bad if $\alpha_i < 0$. A good tuple is an admissible tuple which has no bad indices and a bad tuple
is an admissible tuple with one bad index.
\end{definition}

\begin{definition}Let $E$ be a subset $E \subset \R$ with finite measure and $E'$ be a subset of $E$. Define that $E'$ is a major subset of $E$, provided $2|E'| \geq  |E|$.
\end{definition}

\begin{definition}
Let $E$ be a subset $E \subset \R$ with finite measure. $X(E)$ denotes the space of all functions $f$ with $|f| \leq \chi_E$ almost everywhere.
\end{definition}

\begin{definition}
Fix an admissible tuple $\alpha=(\alpha_1, \alpha_2, \alpha_3,\alpha_4)$.
A quadrilinear form $\Lambda$ is of restricted type $\alpha$ if for each tuple $(E_1,\, E_2,\,E_3,\,E_4)$ of subsets of $\R$ with finite
measure, there exists a constant C and a major subset $E'_j$ of $E_j$, for each (one or none) bad index $j$, such
that 
$$|\Lambda(f_1, f_2, f_3, f_4)|\leq C |E_1|^{\alpha_1} |E_2|^{\alpha_2}|E_3|^{\alpha_3}|E_4|^{\alpha_4}$$
for each tuple $(f_1, f_2, f_3, f_4)$ of functions with $f_j \in X(E'_j)$ when $j$ is a bad index (if exists) and $f_i \in X(E_i)$ when $i$ is a good index. 
\end{definition}

The following restricted type result will be proved in the last two sections:
\begin{theorem} 
For every vertex $A_i, \tilde{A}_i, \, i = 1, . . . , 12$, there exists an admissible tuple $\alpha$ arbitrarily close to the vertex such that the form $\Lambda$ is of restricted type $\alpha$.
\end{theorem}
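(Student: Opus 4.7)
The plan is to perform a time--frequency discretization of $\form$ into sums over tiles and wave packets, analogous to the Lacey--Thiele analysis of BHT and its flag--paraproduct extension by Muscalu, but doubled because we have two singular lines $\Gamma_1$ and $\Gamma_2$. First I would carry out a Whitney decomposition of $\R^2\setminus\Gamma_i$ for $i=1,2$ and on each Whitney cube expand $m_i$ in a Fourier series of smooth bumps adapted to appropriate frequency intervals. Collecting the coefficients exhibits $\form$ as a rapidly convergent sum of model forms
\[
 \form^{\mathrm{disc}}(f_1,f_2,f_3,f_4) \;=\; \sum_{P\in\mathbf P}\sum_{Q\in\mathbf Q}\; \frac{1}{|I_P|^{1/2}|I_Q|^{1/2}}\,\langle f_1,\phi_{P_1}\rangle\,\langle f_2,\phi_{P_2}\phi_{Q_1}\rangle\,\langle f_3,\phi_{Q_2}\rangle\,\langle f_4,\phi_{P_3}\phi_{Q_3}\rangle,
\]
where the tri-tiles $P=(P_1,P_2,P_3)$ and $Q=(Q_1,Q_2,Q_3)$ are associated to $\Gamma_1$ and $\Gamma_2$ respectively, and are coupled through $f_2$ (and, after integration in $x$, through $f_4$). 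A uniform bound on each such $\form^{\mathrm{disc}}$ then implies the claim for $\form$.

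Following the framework of \cite{MTT1,MTT2}, I would attach to the tile families $\mathbf P$ and $\mathbf Q$ the usual quantities $\size_i$ and $\energy_i$ measuring the local BMO and $L^2$-orthogonal contributions of $f_i$ to the relevant wave packets, together with mixed flag-type sizes and energies for the $f_2$ and $f_4$ slots that reflect their simultaneous localization against both families. The standard Bessel inequalities give $\energy_i\lesssim\|f_i\|_2$ and John--Nirenberg inequalities give $\size_i\lesssim\|f_i\|_\infty$, with analogous statements for the hybrid slots. A stopping-time algorithm then organizes $\mathbf P\times\mathbf Q$ into a double family of trees, and a multilinear tree estimate converts the problem into a bound of the shape
\[
 |\form^{\mathrm{disc}}(f_1,f_2,f_3,f_4)| \;\lesssim\; \prod_{i=1}^4 \size_i^{\theta_i}\,\energy_i^{1-\theta_i}
\]
for any admissible choice of exponents $\theta_i\in[0,1]$ summing appropriately.

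At each vertex $A_i$ or $\tilde A_i$, $i=1,\dots,12$, the restricted weak-type estimate arbitrarily close to that vertex reduces to choosing the $\theta_i$ so that substituting $f_i\in X(E_i)$ (or $X(E_j')$ for the bad index) into the previous display yields the required scaling $|E_1|^{\alpha_1}|E_2|^{\alpha_2}|E_3|^{\alpha_3}|E_4|^{\alpha_4}$. For a vertex with only good indices the estimate is immediate from $\size_i\lesssim 1$ and $\energy_i\lesssim|E_i|^{1/2}$, and by letting $\theta_i$ vary one reaches admissible tuples arbitrarily close to the target. For a bad vertex with bad index $j$, the major subset $E_j'$ is constructed by removing from $E_j$ the set on which a suitable maximal--square function, adapted to the double flag structure and applied to $\chi_{E_i}$ for $i\neq j$, exceeds a threshold calibrated by the target exponents; Hardy--Littlewood and BHT-type maximal bounds, together with their flag variants, control the measure of this exceptional set by $|E_j|/2$.

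The principal obstacle is the coupling of the two tile families through $f_2$. Because $P_2$ and $Q_1$ live in frequency intervals whose scales are not comparable, the product wave packet $\phi_{P_2}\phi_{Q_1}$ does not fit cleanly into either $\mathbf P$ or $\mathbf Q$ alone, and one loses the clean orthogonality and John--Nirenberg identities that drive the one-family argument. In the special case of \cite{MTT2}, the identity $\chi_{\xi_1<\xi_2}\chi_{\xi_2<\xi_3}=\chi_{\xi_1<\xi_2}\chi_{(\xi_1+\xi_2)/2<\xi_3}$ in the region $|\xi_3-\xi_2|\gg|\xi_2-\xi_1|$ removes this coupling by recasting the operator as a composition of two BHT-type pieces; for generic smooth $m_1,m_2$ this identity is unavailable, so one must instead prove a genuine two-family size/energy lemma that simultaneously respects both tile structures. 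Establishing this lemma, and the corresponding mixed maximal estimates used in constructing the exceptional sets, is where the bulk of the technical work will lie.
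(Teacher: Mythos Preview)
You correctly identify the central difficulty: for generic $m_1,m_2$ one cannot rewrite $m_1(\xi_1,\xi_2)m_2(\xi_2,\xi_3)$ as $m_1(\xi_1,\xi_2)\tilde m_2(\tfrac{\xi_1+\xi_2}{2},\xi_3)$ in the region $|\xi_2-\xi_1|\ll|\xi_3-\xi_2|$, so the two tile families remain coupled through the variable $\xi_2$. However, your proposed resolution --- a model form with product wave packets $\phi_{P_2}\phi_{Q_1}$ acting on $f_2$, together with unspecified ``mixed flag-type sizes and energies'' and a ``two-family size/energy lemma'' --- is only a restatement of the obstacle, not a mechanism for overcoming it. Such products of incomparable-scale wave packets do not carry the orthogonality needed for a Bessel-type energy bound, and no version of the John--Nirenberg or tree lemma in the existing literature applies to them; you would essentially have to invent a new time--frequency theory for coupled tile families, and nothing in the proposal indicates how.

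The paper takes a quite different route that avoids product wave packets entirely. After the Whitney/Fourier decomposition, one Taylor-expands the bump $\phi_{Q'_1}(\xi_2)$ about the midpoint $(\xi_1+\xi_2)/2$:
\[
\phi_{Q'_1}(\xi_2)=\sum_{\ell=0}^{M}\frac{1}{\ell!}\,\phi_{Q'_1}^{(\ell)}\!\Bigl(\tfrac{\xi_1+\xi_2}{2}\Bigr)\Bigl(\tfrac{\xi_2-\xi_1}{2}\Bigr)^{\ell}+R_M.
\]
Each term now factors cleanly: the $(\xi_2-\xi_1)^\ell$ piece is absorbed into the inner Whitney bump $\phi_Q(\xi_1,\xi_2)$, while the derivative $\phi_{Q'_1}^{(\ell)}$ is a bump in the variable $\xi_1+\xi_2$. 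This recasts the model form as a genuine nested BHT, with the coupling now through a single wave packet $\Phi_{Q_3}$ rather than a product. The price is an explicit scale-gap parameter $\#$ (recording $|Q'|/|Q|\sim 2^{\#}$) and a loss of $2^{\#/2}$ in the size/energy estimates for the composite sequence $a^{(3),\#}_{Q_3}$; this loss is paid for by the gain $2^{-(\#+1)\ell}$ from the Taylor factor, summed over $\ell\ge 1$, while the remainder $R_M$ is disposed of by Coifman--Meyer. The restricted-type estimates at the vertices then follow from single-family size/energy bounds (Lemmas 8.2 and 8.4 for $a^{(3),\#}_{Q_3}$) and the standard Proposition 7.6, with no need for a two-family tree lemma. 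This Taylor-expansion decoupling is the missing idea in your proposal.
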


By interpolation of restricted weak-type estimates in \cite{MTT4},  we thus obtain that Theorem 3.5 implies that $\Lambda$ is of restricted type $\alpha$ for any admissible tuples $\alpha \in \\D$.



It only remains to convert these restricted type estimates into strong type estimates
$(\ref{main})$. To do this, one can just apply (exactly as in \cite{MTT4}) the multilinear Marcinkiewicz
interpolation theorem in \cite{JS} for good tuples and the interpolation lemma 3.11 in
\cite{MTT4} for bad tuples.
This completes the proof of Theorem 2.2. Therefore, it remains to prove Theorem 3.5 here.

\vskip 1cm


\section{Discretization}
In order to prove Theorem 3.5, we first discretize the continuous form $\Lambda$ and reduce the relevant result to estimates for a discretized variant involving sums of inner products with wave packets.
We begin this section by recalling some standard definitions from \cite{MTT2}.
 
  For any interval $I$, we denote by $|I|$ the Lebesgue measure of $I$ and by $cI$ with $c>0$ the interval with the same center as $I$ but $c$ times the side-length. For any (quasi) cube and square $Q$, we denote by $|Q|$ its side-length and by $cQ$ with $c>0$ the cube with the same center as $Q$ but with the side-length $c|Q|$.


 \begin{definition} Fix a positive integer $n$ and a n-tuple $\sigma \in \{0, \frac 1 3 , \frac 2 3 \}^n$. Define the shifted n-dyadic grid $D=D_\sigma^n$ to be the collection of cubes of the form
 $$D_\sigma^n :=\{2^j(k+(0,1)^n +(-1)^j\sigma)| j\in \Z, k\in \Z^n\}.$$
 \end{definition}
 
 \begin{definition} 
 A subset $D'$ of a shifted n-dyadic grid $D$ is called sparse if it satisfies the following properties, for any two cubes $Q, Q'$ in $D$ with $Q \neq Q'$:
  \begin{enumerate}
 \item $|Q|<|Q'|$ implies $10^9|Q|<|Q'|.$
\item $ |Q|=|Q'|$ implies $10^9Q \cap 10^9Q'= \emptyset.$
\end{enumerate}
 \end{definition}
 
 We observe that any subset of a shifted n-dyadic grid with $n \leq 3$ can be split into
$O(1)$ sparse subsets.
 
  \begin{definition} Let a 3-tuple $\sigma \in \{0, \frac 1 3 ,\frac 2 3\}^3$ be a shift.  A collection $\Q \subset D^3_\sigma$ of cubes is said to have rank $1$ if it satisfies the following properties for all $Q,Q' \in \Q$:
  \begin{enumerate}
\item If $Q \neq Q'$, then $Q \cap Q' = \emptyset$.
\item If $Q \neq Q'$, then $Q_i \neq Q'_i$ for all $i = 1, 2, 3$.
\item If $3Q'_i \subset 3Q_i$ for some $i=1,2,3,$ then $10^7Q'_j \subset 10^7Q_j$ for all $j=1,2,3$.
\item If $|Q'| < 10^9|Q|$ and $3Q'_i \subset 3Q_i$ for some $i=1,2,3,$ then $3Q'_j \cap  3Q_j = \emptyset$ for all $j \neq i$.
  \end{enumerate}
 \end{definition}

  \begin{definition} Fix $\sigma = (\sigma_1, \sigma_2, \sigma_3) \in \{0, \frac1 3 , \frac 2 3\}^3$ and fix $1 \leq i \leq 3$. An i-tile with shift $\sigma_i$
is a rectangle $P = I_P \times \omega_P$ in the phase plane such that $I_P \in D_0^1,\; \omega_P \in D_{\sigma_i}^1$, and $|I_P|\cdot |\omega_P|=1$. A tri-tile with shift $\sigma$ is a 3-tuple $\vec P = (P_1, P_2, P_3)$ such that each $P_i$ is an i-tile with shift $\sigma_i$, and each $I_{P_i}$ is independent of $i$, which we denote $I_{\vec P}$. The frequency cube $Q_{\vec P}$ of a tri-tile $\vec P$ is defined by $\prod_{i=1}^3 \omega_{P_i} $.
 \end{definition}
 
  We shall sometimes refer to $i$-tiles with shift $\sigma_i$ just as $i$-tiles, or even as tiles, if the
parameters $\sigma_i$, $i$ are insignificant.
 
  \begin{definition}A set $\vec \P$ of tri-tiles is called sparse, if all tri-tiles in $\vec \P$ have the same shift and the set of the frequency cube $Q_{\vec P}$ with $\vec P\in \vec\P$ is sparse.
 \end{definition}
 Similarily, observe that any set of tri-tiles can be split into $O(1)$ sparse subsets.
  \begin{definition} Let $P$ and $P'$ be tiles. We define the following notations:
 \begin{enumerate} 
  \item $P' < P$ if $I_{P'}\subsetneq  I_P$ and $3\omega_P \subseteq 3 \omega_{P'}$.
  \item $P' \leq P$ if $P' < P$ or $P' = P$.
  \item $P'\lesssim P$  if $I_{P'} \subseteq I_P$ and $10^7\omega_P \subseteq 10^7\omega_{P'}$. 
  \item $P' \lesssim' P$ if $P'\lesssim P$ and $P' \nleq P$.
\end{enumerate} 
 \end{definition}
 
 
 
 \begin{definition} A collection $\vec\P$ of tri-tiles is said to have rank $1$ if it satisfies the following properties for all $\vec P, \vec P'  \in \vec\P$:
\begin{enumerate} 
\item If $\vec P \neq \vec P'$, then $P_i \neq P'_i$ for all $i = 1, 2, 3$.
\item If $P'_i \leq P_i$ for some $i=1,2,3,$ then  $P'_j \lesssim P_j$ for all $j=1,2,3$.
\item f $P'_i \leq P_i$ for some $i=1,2,3$ and furthermore $| I_{\vec {P'}}| < 10^9| I_{\vec {P}}|$, then we have $P'_j \lesssim' P_j$ for all $j \neq i$. 
\end{enumerate}
 \end{definition}
\begin{definition} Let $P$ be a tile. A wave packet $\Phi_P$ adapted to $P$ is a function such that $\widehat{\Phi}_{P}$ is supported in $\frac 9 {10} \omega_P$ and 
\begin{equation}\label{kk}
|\Phi_P(x)| \lesssim |I_P|^{-1/2} \widetilde{\chi}_{I_P}(x)^M
\end{equation}
for all $M > 0$, with the implicit constant depending on $M$. Here, $\widetilde{\chi}_{I_P}$ denotes the approximate cutoff function of the interval ${I_P}$, which is defined by
 $$\widetilde{\chi}_{I_P}(x) := (1+ ( \frac {|x-x_{I_P}|}{|{I_P}|})^2)^{-1/2},$$ where $x_{I_P}$ is the center of the interval ${I_P}$.
  \end{definition}

 The discretized variant of Theorem 3.5 is as follows.

\begin{theorem} Let $\sigma, \sigma' \in \{0, \frac 1 3 ,\frac 2 3\}^3$ be shifts, and let $\vec\P, \vec\Q$ be finite collections of tri-tiles with shifts $\sigma, \sigma'$, respectively, such that both $\vec\P$ and $\vec\Q$ have rank $1$. For each $\vec P \in \vec\P$, and $\vec Q \in \vec\Q$, we let  $\Phi_{P_i} = \Phi_{{P_i},i}$ and $\Phi_{Q_i} = \Phi_{{Q_i},i}$ be wave packets on $P_i$ and $Q_i$, $i = 1, 2, 3$, respectively. Define the forms $\Lambda_{ \vec\P,\vec\Q}$ and $\Lambda^{\#}_{ \vec\P,\vec\Q}$ by 
 $$\Lambda^{\#}_{ \vec\P,\vec\Q}(f_1, f_2, f_3, f_4) := \sum_{\vec P \in \vec\P} \frac {1}{|I_{\vec P}|^{1/2}}
 \langle B^{\#}_{P_1}(f_1, f_2),  \Phi_{P_1}\rangle
 \langle f_3 , \Phi_{P_2}\rangle
       \langle f_4 , \Phi_{P_3}\rangle$$
 where
  $$B^{\#}_{P_1}(f_1, f_2):= \sum_{\vec Q \in \vec\Q: \atop \omega_{Q_3}\subset \omega_{P_1}, \;2^{\#}|\omega_{Q_3}| \sim |\omega_{P_1}|}
\frac {1} {|I_{\vec Q}|^{1/2}}
\langle f_1, \Phi_{Q_1}\rangle 
   \langle f_2 , \Phi_{Q_2}\rangle
   \Phi_{Q_3}$$
   and
   $$\Lambda_{ \vec\P,\vec\Q}(f_1, f_2, f_3, f_4)
:= \sum_{\l=1}^{M}
\sum_{\#\geq 1000}
2^{-(\#+1)\l}
\Lambda^{\#}_{ \vec\P,\vec\Q}(f_1, f_2, f_3, f_4)$$
 for $\# \geq 1000$ and for a sufficiently big number $M\in \Z$. Then $\Lambda^{\#}_{ \vec\P,\vec\Q}$ is of restricted type $\alpha$, modulo extra factor $2^{\#/2}$,
 for all admissible tuples $\alpha \in \\D$, uniformly in the parameters $\sigma, \sigma', \vec \P, \vec \Q, \Phi_{P_i}, \Phi_{Q_i}$. Therefore, for any positive integer $M$, $\Lambda_{ \vec\P,\vec\Q}$ is of restricted type $\alpha$ for all admissible tuples $\alpha \in \\D$, uniformly in the parameters $ \sigma, \sigma', \vec \P, \vec \Q, \Phi_{P_i}, \Phi_{Q_i}$. Furthermore, in the case that $\alpha$ has a bad index $j$, the major subset $E'_j$ can be chosen independently of the parameters $\sigma, \sigma', \vec \P, \vec \Q, \Phi_{P_i}, \Phi_{Q_i}$.
 
 \end{theorem}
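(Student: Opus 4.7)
The plan is to establish the restricted-type bound for each $\Lambda^{\#}_{\vec\P,\vec\Q}$ with the advertised $2^{\#/2}$ loss at admissible tuples arbitrarily close to each of the $24$ extremal vertices $A_i,\tilde A_i$, with constants uniform in $\sigma,\sigma',\vec\P,\vec\Q$ and the wave packets. Once such a vertex-by-vertex bound is in hand, summing the geometric series
\[
\sum_{\ell\ge 1}\sum_{\#\ge 1000}2^{-(\#+1)\ell}\cdot 2^{\#/2}\;\lesssim\;1
\]
yields the restricted-type estimate for $\Lambda_{\vec\P,\vec\Q}$ uniformly in $M$, and the multilinear interpolation theorems of \cite{MTT4} then deliver the estimate at every admissible $\alpha\in\\D$.

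\textbf{Exceptional set.} Given sets $E_1,\dots,E_4$ and a bad index $j$ (if any), define an exceptional set $\Omega$ as the union of level sets of Hardy--Littlewood maximal functions of the $\chi_{E_i}/|E_i|$ together with discretized BHT-maximal and flag-paraproduct-maximal operators applied to the $\chi_{E_i}$, with thresholds chosen so that $|\Omega|\le |E_j|/2$. Setting $E'_j:=E_j\setminus\Omega$ produces a major subset of $E_j$ that depends only on the sets $E_1,\ldots,E_4$, hence automatically independent of the discretization data $\sigma,\sigma',\vec\P,\vec\Q,\Phi_{P_i},\Phi_{Q_i}$, as required.

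\textbf{Trees and the key definition.} After splitting $\vec\P$ and $\vec\Q$ into a bounded number of sparse, rank-$1$ sub-collections and discarding tri-tiles whose spatial interval is contained in $\Omega$, we apply the tree-selection algorithm of \cite{MTT1,MTT2} slot-by-slot. In slot $1$ of the outer family the tested object is the bilinear output $B^{\#}_{P_1}(f_1,f_2)$, so we introduce a compound size $\size_1^{\mathrm{out}}(T)$ defined by an $L^2$-normalized tree average of $|\langle B^{\#}_{P_1}(f_1,f_2),\Phi_{P_1}\rangle|^2$ and estimate it in terms of the usual sizes of $f_1,f_2$ against the inner family $\vec\Q$. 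The frequency constraint $2^{\#}|\omega_{Q_3}|\sim|\omega_{P_1}|$ allows at most $O(2^{\#})$ disjoint $\omega_{Q_3}$ inside each $\omega_{P_1}$; almost orthogonality in the $Q_3$-slot then costs at most a $2^{\#/2}$ factor, which is precisely the advertised loss.

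\textbf{Single tree estimate and main obstacle.} The core of the proof is a single tree estimate
\[
|\Lambda^{\#}_{T,\vec\Q_T}(f_1,f_2,f_3,f_4)|\;\lesssim\;2^{\#/2}\,|I_T|\cdot \size_1^{\mathrm{out}}(T)\cdot\prod_{i=2}^{4}\size_i(T),
\]
combined with the classical energy estimates $\energy_i\lesssim\|f_i\chi_{E_i}\|_2$, the control of sizes on $\Omega^c$ by powers of $|E_i|/|E_j|$, and summation across the stratification $\vec\P=\bigsqcup_n\vec\P_n$ with $\sum_{T\in\vec\P_n}|I_T|\lesssim 2^{2n}\energy^2$. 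Optimizing the size/energy exponents at each vertex $A_i,\tilde A_i$ then reproduces the $12{+}12$ symmetry of $\\D$. \textbf{The main obstacle} is the slot-$1$ tree estimate: because $m_1$ and $m_2$ are genuine singular symbols rather than characteristic functions, one cannot reduce $B^{\#}_{P_1}$ to a composition of two bilinear Hilbert transforms as in \cite{MTT2}. Instead, a John--Nirenberg-type argument at the inner-tile level is required to extract the $2^{\#/2}$ factor while preserving the correct dependence on the sizes of $f_1$ and $f_2$. Once this slot-$1$ tree estimate is established, the remaining counting and summation steps follow the well-trodden path of \cite{MTT1,MTT2}.
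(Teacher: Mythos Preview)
Your proposal diverges from the paper's argument at the very first structural step, and this difference matters. The paper does \emph{not} keep the $\vec\P$-sum as the outer sum with a ``compound size'' attached to slot~1 via $B^{\#}_{P_1}(f_1,f_2)$. Instead it \emph{reverses the order of summation}, writing
\[
\Lambda^{\#}_{\vec\P,\vec\Q}(f_1,f_2,f_3,f_4)\;=\;\sum_{\vec Q\in\vec\Q}\frac{1}{|I_{\vec Q}|^{1/2}}\,a^{(1)}_{Q_1}a^{(2)}_{Q_2}a^{(3),\#}_{Q_3},
\]
where $a^{(1)}_{Q_1}=\langle f_1,\Phi_{Q_1}\rangle$ and $a^{(2)}_{Q_2}=\langle f_2,\Phi_{Q_2}\rangle$ are standard, and all of the bilinear complexity is pushed into
\[
a^{(3),\#}_{Q_3}=\sum_{\substack{\vec P\in\vec\P:\;\omega_{Q_3}\subset\omega_{P_1}\\ 2^{\#}|\omega_{Q_3}|\sim|\omega_{P_1}|}}\frac{1}{|I_{\vec P}|^{1/2}}\langle f_3,\Phi_{P_2}\rangle\langle f_4,\Phi_{P_3}\rangle\langle\Phi_{P_1},\Phi_{Q_3}\rangle.
\]
The point of the reversal is spatial localization: under the constraint one has $|I_{\vec P}|\sim 2^{-\#}|I_{\vec Q}|$, so the \emph{inner} tiles now have the smaller spatial intervals. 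In your ordering the inner $\vec Q$-tiles have the larger intervals, and your heuristic ``at most $O(2^{\#})$ disjoint $\omega_{Q_3}$ inside each $\omega_{P_1}$, hence a $2^{\#/2}$ loss by almost orthogonality'' does not control the tree norms: for a fixed $\vec P$ there are $\sim 2^{\#}$ admissible frequency intervals $\omega_{Q_3}$, but for each of them there is no bound on the number of spatial intervals $I_{\vec Q}$, so the counting you invoke is not available at the size level. This is precisely the obstruction the paper flags at the start of Section~8: the extra constraint $2^{\#}|\omega_{Q_3}|\sim|\omega_{P_1}|$ destroys the decoupling trick from \cite{MTT2}, and a new argument is required.

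After the reversal, the paper proves separate size and energy bounds for $(a^{(3),\#}_{Q_3})_{\vec Q}$. The size bound (Lemma~8.2) is obtained, for a fixed $1$- or $2$-tree $T\subset\vec\Q$, by first showing (Lemma~8.1) that the collections $\{\omega_{Q_3}\}_{\vec Q\in T}$ and $\{\omega_{P_2}\},\{\omega_{P_3}\}$ are lacunary while $\{\omega_{P_1}\}$ is not, and then replacing $\langle\Phi_{P_1},\Phi_{Q_3}\rangle$ by $2^{-\#/2}\langle\tilde\Phi_{\tilde P_1},\Phi_{Q_3}\rangle$ with $\tilde\Phi_{\tilde P_1}$ adapted to an interval of length $2^{\#}|I_{\vec P}|$; a separate lemma (Lemma~8.3) shows that the resulting $\vec P'$-collection depends only on $T$ and not on the individual $\vec Q$, which is what lets the modulated Calder\'on--Zygmund machinery run. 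The $2^{\#/2}$ loss appears \emph{not} in the size but in the energy estimate (Lemma~8.4): after dualizing via Lemma~7.5 and applying Proposition~7.6 with $\theta_1=0$, one is reduced to $\energy_1((b^{(1)}_{P_1}))\lesssim 2^{\#/2}$, which comes from $|\langle\Phi_{P_1},\Phi_{Q_3}\rangle|\lesssim 2^{-\#/2}(1+\dist(I_{\vec P},I_{\vec Q})/|I_{\vec Q}|)^{-100}$ together with the count $|\A^{k}_{\vec P,\T}|\lesssim 2^{k}2^{\#}$. Finally, the exceptional set is much simpler than you propose: only the Hardy--Littlewood maximal function of the $\chi_{E_j}$ is used, there is no need for BHT-maximal or flag-paraproduct-maximal operators, and one partitions $\vec\Q$ (and, in the bad-index-$3,4$ cases, also $\vec\P$) according to $\dist(I_{\vec Q},\R\setminus\Omega)/|I_{\vec Q}|$.
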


 Note that once we obtain the first conclusion in Theorem 4.9, say
 \begin{equation}\label{first}
 |\Lambda^{\#}_{ \vec\P,\vec\Q}(f_1, f_2, f_3, f_4)| \lesssim 2^{\#/2}|E_1|^{\alpha_1} |E_2|^{\alpha_2}|E_3|^{\alpha_3}|E_4|^{\alpha_4} 
 \end{equation}
 for given tuple $(E_1,\, E_2,\,E_3,\,E_4)$ of subsets of $\R$ with finite
measure and for each tuple $(f_1, f_2, f_3, f_4)$ of functions with $f_j \in X(E'_j)$ when $j$ (if exists) is a bad index, and $f_i \in X(E_i)$ when $i$ is a good index, then we obtain the following conclusion:
 \begin{align}
 |\Lambda_{ \vec\P,\vec\Q}(f_1, f_2, f_3, f_4)|
&\leq 
\sum_{\l=1}^{M}
\sum_{\#\geq 1000}
2^{-(\#+1)\l}
|\Lambda^{\#}_{ \vec\P,\vec\Q}(f_1, f_2, f_3, f_4)|\nonumber\\
&\lesssim
\sum_{\l=1}^{M}
\sum_{\#\geq 1000}
2^{-(\#+1)\l}
2^{\#/2}|E_1|^{\alpha_1} |E_2|^{\alpha_2}|E_3|^{\alpha_3}|E_4|^{\alpha_4}\nonumber\\
&\leq
|E_1|^{\alpha_1} |E_2|^{\alpha_2}|E_3|^{\alpha_3}|E_4|^{\alpha_4}\nonumber
\end{align}
for any positive integer $M$. Henceforth, we shall focus on showing the first conclusion.

 \vskip .7cm

 The remainder for this section is devoted to showing how Theorem 3.5 can be deduced from Theorem 4.9.
 First, we divide the symbol  $m_1 m_2$ in (\ref{mine}) as 
\begin{align}
m_1(\xi_1, \xi_2)&=\sum_{Q} m_1(\xi_1, \xi_2)\phi_Q(\xi_1, \xi_2),\nonumber\\
m_2(\xi_2, \xi_3)&=\sum_{Q'} m_2(\xi_2, \xi_3) \phi_{{Q'}}(\xi_2, \xi_3)\nonumber
\end{align}
by a standard partition of unity, where $Q, {{Q'}}$ are the shifted dyadic squares such that $Q=Q_1 \times Q_2$ and $Q'=Q'_1 \times Q'_2$. Here, $Q_j, Q'_j$ are shifted dyadic intervals $2^k (n+(1, 0)+\sigma)$, for any $k, n \in \Z$ and $\sigma \in \{0, 1/3, 2/3\}$, satisfying the property 
\begin{align}
\dist(Q, \Gamma_1) &\simeq C_0 |Q|\nonumber\\
\dist(Q', \Gamma_2) &\simeq C'_0 |Q'|\nonumber,
\end{align}
where $C_0, C'_0$ are fixed large constants and $\Gamma_j$, for each $j=1,2$, is defined by (\ref{Gamma}). Here, each $ \phi_Q, \phi_Q'$ are smooth bumps adapted to $Q, Q'$ and supported in $\frac 8 {10} Q, \frac 8 {10} Q',$ respectively. Then, we obtain
\begin{align}
m_1(\xi_1, \xi_2)m_2(\xi_2, \xi_3)&=\sum_{Q, {{Q'}}}  m_1(\xi_1, \xi_2) \phi_Q(\xi_1, \xi_2) \cdot m_2(\xi_2, \xi_3)\phi_{{Q'}}(\xi_2, \xi_3)\nonumber\\
&=\sum_{|Q|\ll|{Q'}|} +\sum_{|Q|\sim|{Q'}|}+\sum_{|Q|\gg|{Q'}|}:= m_{\I}+m_{\I\I}+m_{\I\I\I},\nonumber
\end{align}
which is essentially similar to looking at the symbol separately in the three regions $\chi_{|\xi_2-\xi_1|\ll |\xi_3-\xi_2|}$, $\chi_{|\xi_2-\xi_1|\sim |\xi_3-\xi_2|}$ and $\chi_{|\xi_2-\xi_1|\gg |\xi_3-\xi_2|}$. 

The multiplier operator $T_{\I\I}$ defined by
\begin{align*}
T_{\I\I}(f_1, f_2,f_3)(x) := \int_{\scriptsize{\R^3}} m_{\I\I}(\xi_1,\xi_2, \xi_3)
\widehat{f_1}(\xi_1)
\widehat{f_2}(\xi_2)
\widehat{f_3}(\xi_3)
e^{2\pi i x(\xi_1 + \xi_2+\xi_3)}
d\xi_1
d\xi_2
d\xi_3
\end{align*}
is similar to the Bilinear Hilbert Transform with three functions. Specifically, it satisfies
$$|\partial^{\alpha}(m_{\I\I}(\xi))|\lesssim \frac{1}{\dist(\Gamma, \xi)^{|\alpha|}}$$
for every $\xi \in \R^3\setminus \Gamma$ and sufficiently many multi-indices $\alpha$, where $\Gamma$ is given by the line $\Gamma:=\{(\xi_1, \xi_2, \xi_3)\in \R^3 : \xi_1=\xi_2=\xi_3\}$. In fact, such operators were studied in \cite{MTT4} and Muscalu, Tao and Thiele established a wider range of $L^p$ estimates for them than those in Theorem 2.2.

Now we claim that the multiplier operator  $T_{\I}$ given by
\begin{align*}
T_{\I}(f_1, f_2,f_3)(x) := \int_{\scriptsize{\R^3}} m_{\I}(\xi_1,\xi_2, \xi_3)
\widehat{f_1}(\xi_1)
\widehat{f_2}(\xi_2)
\widehat{f_3}(\xi_3)
e^{2\pi i x(\xi_1 + \xi_2+\xi_3)}
d\xi_1
d\xi_2
d\xi_3
\end{align*}
maps $L^{p_1} \times L^{p_2} \times L^{p_3} \rightarrow L^{p_4}$, provided
$(1/{p_1}, 1/{p_2},1/{p_3},1-1/{p_4}) \in \\D'$, where $1<p_1,p_2, p_3 \leq \infty$, $0<p_4<\infty$, and $1/{p_1}+1/{p_2}+1/{p_3} =1/{p_4}$.  And similarly, claim that the operator $T_{{\I\I\I}}$ with the symbol $m_{\I\I\I}$ has the same $L^p$ estimates as long as $(1/{p_1}, 1/{p_2},1/{p_3},1-1/{p_4}) \in \\D''.$ If these claims for $T_{\I}$ and $T_{\I\I\I}$ hold, then we finally obtain Theorem 2.2. We shall only prove the claim for  $T_{\I}$, as the claim for $T_{\I\I\I}$ follows by a permutation of the indices 1 and 3.

Consider 
\begin{align}
m_\I&=\sum_{|Q|\ll|{Q'}|} m_1(\xi_1, \xi_2) \phi_Q(\xi_1, \xi_2) \cdot m_2(\xi_2, \xi_3) \phi_{{Q'}}(\xi_2, \xi_3)\nonumber\\
&=\sum_{Q} m_1(\xi_1, \xi_2)\phi_Q(\xi_1, \xi_2) \left(\sum_{{{Q';}} |Q|\ll|{Q'}|}m_2(\xi_2, \xi_3)\phi_{{Q'}}(\xi_2, \xi_3)\right)\nonumber\\
&=\sum_{Q} m_1(\xi_1, \xi_2) \phi_Q(\xi_1, \xi_2) \left(\sum_{{{Q';}} |Q|\ll|{Q'}|} \sum_{\n \in  \scriptsize{\Z^2}} C_{\n}^{{{Q'}}} \phi_{Q'_1, \n,1}(\xi_2)\phi_{Q'_2, \n,2}(\xi_3)\right).\label{5}
\end{align}
We establish the last equality by computing a double Fourier series of $m_2(\xi_2, \xi_3)\phi_{{Q'}}(\xi_2, \xi_3)$ where $\phi_{Q'_i,\n,i}$ is a bump function adapted to $Q'_i$ and supported to $\frac 9 {10}Q'_i$ uniformly in $n_i$ for $(n_1, n_2):=\n \in \Z^2$. Here, the Fourier coefficient is given by
\begin{equation}\label{fc1}
C_{\n}^{{{Q'}}} = \frac 1 {|{Q'}|^2} \int_{\scriptsize{\R^2}} m_2(\xi_2, \xi_3) \phi_{{Q'}}(\xi_2, \xi_3) e^{-2\pi i n_1 {\xi_2 \over {|Q'|}}} e^{-2\pi i n_2 {\xi_3 \over {|Q'|}}}d\xi_2 d\xi_3.
\end{equation}
We now assert that $|C_{\n}^{{{Q'}}}| \lesssim C(\n)$, where the implicit constant does not depend on each $Q'$, and $C(\n)$ is a rapidly decreasing sequence. This assertion is justified later in Section 5.

Then, we can majorize $(\ref{5})$ by
\begin{equation}\label{6}
\sum_{\n \in  \scriptsize{\Z^2}} C(\n)\sum_{Q, {{Q'}}; |Q|\ll|{Q'}| } m_1(\xi_1, \xi_2) \phi_Q(\xi_1, \xi_2) \phi_{Q'_1, \n,1}(\xi_2)\phi_{Q'_2, \n,2}(\xi_3).
\end{equation}
Because of big decaying factor $C(\n)$, once we have $L^p$ estimates of the corresponding trilinear operator with the symbol 
\begin{equation}\label{symbol1212}
\sum_{Q, {{Q'}}; |Q|\ll|{Q'}| } m_1(\xi_1, \xi_2)\phi_Q(\xi_1, \xi_2)  \phi_{Q'_1, \n,1}(\xi_2)\phi_{Q'_2, \n,2}(\xi_3),
\end{equation}
then we can control the last summation with respect to $\n\in \Z^2$.

Now by applying the Taylor series to $\phi_{Q'_1, \n,1}(\xi_2)$, we obtain that
\begin{equation}\nonumber
 \phi_{Q'_1, \n,1}(\xi_2)=\sum_{\l=0}^M \phi_{Q'_1, \n,1}^{(\l)}\left(\frac{\xi_1+\xi_2}{2}\right)\left(\frac{\xi_2-\xi_1}{2}\right)^\l \frac 1 {\l!} +R_M(\xi_1, \xi_2) 
\end{equation}
 for a sufficiently big number $M\in \Z$. Here $R_M(\xi_1, \xi_2)$ is the remainder term given by
 \begin{equation}\label{remainder}
 R_M(\xi_1, \xi_2)= \frac 1 {M!} \phi_{Q'_1, \n,1}^{(M)}(\xi_{\theta})\left(\frac{\xi_2-\xi_1}{2}\right)^{M},
 \end{equation}
  where $\xi_{\theta} =(1-\frac \theta 2)\xi_2 +\frac \theta 2 \xi_1$ for some $\theta$ between $0$ and $1$.

We split (\ref{symbol1212}) into three terms with $\l=0$, with $1\leq \l< M$, and with the remainder term $R_M(\xi_1, \xi_2)$ as following:
\begin{align}
& \sum_{Q, {{Q'}}; |Q|\ll|{Q'}| } 
 m_1(\xi_1, \xi_2)\phi_Q(\xi_1, \xi_2)\phi_{Q'_1, \n,1}\left(\frac{\xi_1+\xi_2}{2}\right)\phi_{Q'_2, \n,2}(\xi_3)\nonumber\\
&+\sum_{\l=1}^{M-1}\sum_{Q, {{Q'}}; |Q|\ll|{Q'}| } m_1(\xi_1, \xi_2) \phi_Q(\xi_1, \xi_2)  \phi_{Q'_1, \n,1}^{(\l)}\left(\frac{\xi_1+\xi_2}{2}\right)\left(\frac{\xi_2-\xi_1}{2}\right)^\l \frac 1 {\l!} \phi_{Q'_2, \n,2}(\xi_3)\nonumber\\
&+\sum_{Q, {{Q'}}; |Q|\ll|{Q'}| } m_1(\xi_1, \xi_2) \phi_Q(\xi_1, \xi_2) R_M(\xi_1, \xi_2)\phi_{Q'_2, \n,2}(\xi_3) \nonumber\\
&:=m_{\I, \{\l=0\}}+\sum_{\l=1}^{M-1}    m_{\I, \l}+m_{\I, R_M}.\label{8}
\end{align}

 We can easily check that $m_{\I, \{\l=0\}}$ is similar to the symbol of the operator $T$ in (\ref{biest}) after splitting $m_1(\xi_1, \xi_2)\phi_Q(\xi_1, \xi_2)$ as a double Fourier series in $\xi_1, \xi_2$
(modulo extra outer summation and rapidly decaying factor from the Fourier coefficients, like $C(\n)$ in (\ref{6})). Thus, we now consider the other two cases in (\ref{8}) more carefully.

For $1\leq \l < M$, we rewrite  $m_{\I, \l}$ as
 \begin{align}\label{9090}
\sum_{\#\geq 1000}
\sum_{Q, {{Q'}}; \atop k_2-k_1=\# }  
 m_1(\xi_1, \xi_2) \phi_Q(\xi_1, \xi_2)
 \phi_{Q'_1, \n,1}^{(\l)}\left(\frac{\xi_1+\xi_2}{2}\right)
 \left(\frac{\xi_2-\xi_1}{2}\right)^\l \frac 1 {\l!} \phi_{Q'_2, \n,2}(\xi_3)
 \end{align}
 by letting $|{Q}|=2^{k_1}$ and $|{{Q'}}|=2^{k_2}$, for $k_1, k_2 \in \Z$, and by assuming that $k_2 = k_1 + \#$, with $\# \geq 1000$ as $k_1 \ll k_2$. Then (\ref{9090}) is equal to
 \begin{align}
  \sum_{\#\geq 1000}
\sum_{Q, {{Q'}}; \atop k_2-k_1=\# } 
2^{(k_1-1)\l} \left[ m_1(\xi_1, \xi_2) \phi_Q(\xi_1, \xi_2)\left(\frac{\xi_2-\xi_1}{2^{k_1}}\right)^\l \frac 1 {\l!} \right]
\phi_{Q'_1, \n,1}^{(\l)}\left(\frac{\xi_1+\xi_2}{2}\right) \phi_{Q'_2, \n,2}(\xi_3)\nonumber
\end{align}
\begin{align}
 =\sum_{\#\geq 1000}
\sum_{Q, {{Q'}}; \atop 2^\#{|Q|} \sim |Q'|} 
\frac {2^{(k_1-1)\l}}{2^{{k_2} \l}} \left[ m_1(\xi_1, \xi_2) \phi_Q(\xi_1, \xi_2)\left(\frac{\xi_2-\xi_1}{2^{k_1}}\right)^\l \frac 1 {\l!} \right]
  2^{{k_2} \l} \phi_{Q'_1, \n,1}^{(\l)}\left(\frac{\xi_1+\xi_2}{2}\right)& \phi_{Q'_2, \n,2}(\xi_3)\nonumber
\end{align}
 \begin{align}
  =\sum_{\#\geq 1000}
2^{-(\#+1)\l}
\sum_{Q, {{Q'}}; \atop 2^\#{|Q|} \sim |Q'|} 
\left[ \sum_{\s \in \scriptsize{\Z^2}} C_{\s}^{{Q,\l}} \phi_{Q_1, \s,1}(\xi_1)\phi_{Q_2, \s,2}(\xi_2) \right] 
2^{{k_2} \l} \phi_{Q'_1, \n,1}^{(\l)}\left(\frac{\xi_1+\xi_2}{2}\right) &\phi_{Q'_2, \n,2}(\xi_3)\nonumber
\end{align}

by computing a double Fourier series of $m_1(\xi_1, \xi_2) \phi_Q(\xi_1, \xi_2)\left(\frac{\xi_2-\xi_1}{2^{k_1}}\right)^\l \frac 1 {\l!}$, where $\phi_{{Q}_i, \s,i}$ is a bump function adapted to ${Q}_i$ and supported to $\frac 9 {10}{Q}_i$ uniformly in $s_i$ for $(s_1, s_2):=\s \in \Z^2$. Here, the Fourier coefficient $C_{\s}^{Q,\l}$ is given by
\begin{equation}\label{fc2}
C_{\s}^{Q,\l} = \frac 1 {2^{2k_1}} \int_{\scriptsize{\R^2}} m_1(\xi_1, \xi_2) \phi_Q(\xi_1, \xi_2)\left(\frac{\xi_2-\xi_1}{2^{k_1}}\right)^\l \frac 1 {\l!}  e^{-2\pi i s_1 {\xi_1 \over 2^{k_1}}} e^{-2\pi i s_2 {\xi_2 \over 2^{k_1}}}d\xi_1 d\xi_2.
\end{equation}
 We now assert that $|C_{\s}^{Q,\l}| \lesssim C(\s)$, where the implicit constant does not depend on $Q$ and $\l$, and $C(\s)$ is a rapidly decreasing sequence. This assertion is justified later in Section 5. Then, we can majorize $m_{\I, \l}$ by
\begin{align}
 \sum_{\s \in \scriptsize{\Z^2}} C(\s)
\sum_{\#\geq 1000}
2^{-(\#+1)\l}
\sum_{Q, {{Q'}}; \atop 2^\#{|Q|} \sim |Q'|
 } 
\phi_{Q_1, \s,1}(\xi_1)\phi_{Q_2, \s,2}(\xi_2) 
2^{{k_2} \l}& \phi_{Q'_1, \n,1}^{(\l)}\left(\frac{\xi_1+\xi_2}{2}\right) \phi_{Q'_2, \n,2}(\xi_3)\label{cm}
\end{align}
Here again, because of big decaying factor $C(\s)$, once we have $L^p$ estimates of the 
corresponding trilinear operator with the symbol  
\begin{equation}\label{9}
\sum_{\#\geq 1000}
2^{-(\#+1)\l}
\sum_{Q, {{Q'}}; \atop 2^\#{|Q|} \sim |Q'|
 } 
\phi_{Q_1, \s,1}(\xi_1)\phi_{Q_2, \s,2}(\xi_2)  2^{{k_2} \l} \phi_{Q'_1, \n,1}^{(\l)}\left(\frac{\xi_1+\xi_2}{2}\right) \phi_{Q'_2, \n,2}(\xi_3),
\end{equation}
then we can control the last summation with respect to $\s\in \Z^2$.

 We now claim that $ 2^{{k_2} \l} \phi_{Q'_1, \n,1}^{(\l)}(\cdot) $ is also a bump function adapted to a dyadic interval ${Q'_1}$. Observe that if a dyadic interval ${Q'_1}$ is given by ${2^{k_2}}[m, m+1]$, for some $m \in \Z$, then we can denote the bump function $\phi_{Q'_1, \n,1}(\xi)$ adapted to ${Q'_1}$ in (\ref{9}) by
 \begin{equation}\label{515}
  \phi_{Q'_1, \n,1}(\xi):=\phi\left(\frac \xi {2^{k_2}} -m\right)\cdot e^{2\pi i n_1 {\xi \over 2^{k_2}}},
  \end{equation}
  where $\phi$ is a bump function adapted to $[0, 1]$, and then (\ref{515}) is equal to
  $$\phi\left(\frac \xi {2^{k_2}} -m\right)\cdot e^{2\pi i n_1 ({\xi \over 2^{k_2}}-m)}= {\phi_{n_1}}\left(\frac \xi {2^{k_2}} -m\right),$$
 where ${\phi_{n_1}}(\xi)$ is given by $\phi(\xi)e^{2\pi i n_1 \xi}$, which is a bump function adapted to $[0, 1].$
 Then, we have
  \begin{equation}\label{11}
  2^{{k_2} \l} \phi_{Q'_1, \n,1}^{(\l)}(\xi) =\phi_{n_1}^{(\l)}\left(\frac \xi {2^{k_2}}-m\right),
  \end{equation}
  which is a bump function adapted to ${Q'_1}={2^{k_2}}[m, m+1]$, for all $1 \leq \l < M$, so that we denote  $2^{{k_2} \l} \phi_{Q'_1, \n,1}^{(\l)}(\xi)$ by $ \widetilde{\phi}_{{Q'_1}, \n, 1}(\xi),$ since each number $\l$ is unimportant here.
  
  Thus, (\ref{9}) can be written as
  $$\sum_{\#\geq 1000}
2^{-(\#+1)\l}
\sum_{Q, {{Q'}}; \atop 2^\#{|Q|} \sim |Q'|
 } 
\phi_{Q_1, \s,1}(\xi_1)\phi_{Q_2, \s ,2}(\xi_2) \widetilde{\phi}_{{Q'_1}, \n,1}  \left(\frac{\xi_1+\xi_2}{2}\right) \phi_{Q'_2, \n,2}(\xi_3)$$

\begin{equation}\label{12}
=\sum_{\#\geq 1000}
2^{-(\#+1)\l}
\sum_{Q, {{Q'}}; \atop 2^\#{|Q|} \sim |Q'|
 } 
\phi_{Q_1, \s,1}(\xi_1)\phi_{Q_2, \s,2}(\xi_2) \widetilde{\phi}_{{Q''_1}, \n, 1} ({\xi_1+\xi_2}) \phi_{Q'_2,\n,2}(\xi_3),
\end{equation}
where ${Q''_1}={2^{k_2 +1}}[m, m+1].$ Henceforth, we shall redefine ${Q'_1}$ to be ${Q''_1}$ and redefine $ \widetilde{\phi}_{{Q'_1}, \n, 1}$ accordingly for simplicity. 

In the last expression (\ref{12}), we see that
$\xi_1 \in \frac 9 {10} Q_1$ and $\xi_2 \in \frac 9 {10} Q_2$, which follows that 
$\xi_1 + \xi_2 \in \frac 9 {10} Q_1+ \frac 9 {10} Q_2$. As a consequence, one can find a shifted dyadic interval $Q_3$ with the properties that
$\frac 9 {10} Q_1+\frac 9 {10} Q_2 \subseteq \frac 7 {10} Q_3$
and 
$|Q_1|=|Q_2| \simeq |Q_3|$. In particular, there exists bump functions $\phi_{Q_3,\n,3}$ adapted to $Q_3$ uniformly in $\n \in \Z^2$ and supported in $\frac 9 {10} Q_3$, such that $\phi_{Q_3,\n,3}\equiv1$ on $\frac 9 {10} Q_1+\frac 9 {10} Q_2$. 

 Similarly, we can find a shifted dyadic interval $Q'_3$ satisfying $\frac 9 {10} Q'_1+\frac 9 {10} Q'_2 \subseteq \frac 7 {10} Q'_3$ and $|Q'_1|\simeq |Q'_2| \simeq |Q'_3|$, and bump functions $\phi_{Q'_3,\s,3}$ adapted to $Q'_3$ uniformly in $\s \in \Z^2$ and supported in $\frac 9 {10} Q'_3$, such that $\phi_{Q'_3,\s,3}\equiv1$ on $\frac 9 {10} Q'_1+\frac 9 {10} Q'_2$.

Thus (\ref{12}) can be written as
 \begin{align}
 \sum_{\#\geq 1000}
2^{-(\#+1)\l}
\sum_{Q, {{Q'}}; \atop 2^\#{|Q|} \sim |Q'|
 } 
 \phi_{Q_1, \s,1}(\xi_1)
 &\phi_{Q_2, \s,2}(\xi_2)\phi_{Q_3,\s,3}(\xi_1+\xi_2)\cdot\nonumber\\
 &\widetilde{\phi}_{{Q'_1},\n,1} ({\xi_1+\xi_2}) \phi_{Q'_2, \n,2}(\xi_3)\phi_{Q'_3, \n,3}(\xi_1+\xi_2+\xi_3)\label{13}
\end{align}
where shifted dyadic quasi-cubes $Q$, $Q'$ in $\R^3$ are defined by $Q:=Q_1\times Q_2 \times Q_3, \;  Q':=Q'_1\times Q'_2 \times Q'_3$. Since any set of a shifted dyadic quasi-cubes in $\R^3$ can be split into $O(1)$ sparse subsets, we can assume that the sum (\ref{13}) runs over sparse collections of $Q$ and $Q'$ modulo finitely many such corresponding expressions. Then we can see that, for each shifted dyadic quasi-cubes $Q$ in such a sparse collection, there exists a unique shifted dyadic cube $\tilde{Q}$ in $\R^3$ such that $Q \subseteq \frac 7 {10} \tilde{Q}$ and $|Q| \sim |\tilde Q|$. Thus we can now assume that the sum (\ref{13}) runs
over sparse collections $\Q, \Q'$ of shifted dyadic cubes $Q, Q'$, respectively. Then, we can see that the multipliers of the type (\ref{13}) are well localized, which allows us to simplify the corresponding trilinear operator $T_{m_{\#, \l}}$ with the symbol $m_{\#,\l}$, where $m_{\#,\l}$ denotes the inner sum in (\ref{13}). More specifically, 
in order to establish $L^p$ estimates for the trilinear operator $T_{m_{\#, \l}}$, for each $\#\geq 1000$, we consider the quadrilinear form $\Lambda_{\#,\l}$ associated to $T_{m_{\#, \l}}$ defined by
\begin{align}
\int_{\scriptsize{\R}} T_{m_{\#, \l}}&(f_1, f_2, f_3)(x) f_4(x) dx
\nonumber\\
=&\int_{\scriptsize{\R}}
 \left(\int_{\scriptsize{\R^3}}
 m_{\#, \l}(\xi_1, \xi_2, \xi_3)
\widehat{f_1}(\xi_1)
\widehat{f_2}(\xi_2)
\widehat{f_3}(\xi_3)
e^{2 \pi i x (\xi_1+\xi_2+\xi_3)}
d\xi_1
d\xi_2
d\xi_3
\right)
f_4(x) 
dx
\nonumber\\
=&\int_{\scriptsize{\R^3}} 
 m_{\#, \l}(\xi_1, \xi_2, \xi_3)
\widehat{f_1}(\xi_1)
\widehat{f_2}(\xi_2)
\widehat{f_3}(\xi_3)
\widehat{f_4}(-\xi_1-\xi_2-\xi_3)
d\xi_1
d\xi_2
d\xi_3\nonumber
\end{align}
\begin{align}
=\sum_{Q\in\Q, {{Q'\in\Q'}}; \atop 2^\#{|Q|} \sim |Q'|
 } &
\int_{\xi_1+\xi_2+\xi_3+\xi_4=0} 
\phi_{Q_1, \s,1}(\xi_1)
\phi_{Q_2, \s,2}(\xi_2)
\phi_{Q_3,\s,3}(\xi_1+\xi_2)
\widetilde{\phi}_{{Q'_1}, \n, 1} ({\xi_1+\xi_2})
\cdot
\nonumber\\
&\phi_{Q'_2, \n,2}(\xi_3)\phi_{Q'_3,\n,3}(\xi_1+\xi_2+\xi_3)
\widehat{f_1}(\xi_1)
\widehat{f_2}(\xi_2)
\widehat{f_3}(\xi_3)
\widehat{f_4}(\xi_4)
d\xi_1
d\xi_2
d\xi_3
d\xi_4\nonumber
\end{align}
\begin{align}
=\sum_{Q\in\Q, {{Q'\in\Q'}}; \atop 2^\#{|Q|} \sim |Q'| } &
\int_{\xi_1+\xi_2+\xi_3+\xi_4=0} 
\widehat{f_1\ast \check \phi_{Q_1, \s,1}}(\xi_1)\;\;
\widehat{f_2\ast \check \phi_{Q_2, \s,2}}(\xi_2)\;\;
\widehat{\check \phi_{Q_3,\s,3} \ast \check{\widetilde {\phi}}_{{Q'_1}, \n,1}}(\xi_1+\xi_2)\nonumber\\
&\widehat{f_3\ast \check \phi_{Q'_2,\n,2}}(\xi_3)\;
\widehat{f_4\ast \check{\widetilde{\phi}}_{Q'_3,\n,3}}(\xi_4)\;
d\xi_1
d\xi_2
d\xi_3
d\xi_4\nonumber
\end{align}
\begin{align}
=\sum_{Q'\in\Q'}
\int_{\scriptsize{\R}}&
\left(
\left[
\sum_{Q\in\Q;\atop 2^\#{|Q|} \sim |Q'|}(
({f_1\ast \check \phi_{Q_1, \s,1}})(x)
({f_2\ast \check \phi_{Q_2, \s,2}})(x)
)
\ast
\check{ \phi}_{Q_3,\s,3} 
\right]
\ast \check{\widetilde {\phi}}_{{Q'_1}, \n, 1}\right)
(x)
\nonumber\\
&({f_3\ast \check \phi_{Q'_2, \n,2}})(x)
({f_4\ast \check{\widetilde{\phi}}_{Q'_3,\n,3}})(x)
dx\label{131}
\end{align}
by Plancherel's Theorem. Here, ${\widetilde{\phi}}_{Q'_3,\n,3}(\xi):={\phi}_{Q'_3,\n,3}(-\xi)$ and we redefine $Q'_3$ to be $-Q'_3$, which is the selected interval in $\R$ about the origin. Furthermore, (\ref{131}) is equal to
\begin{align}
\sum_{Q'\in\Q'} |Q'|^{\frac 3 2}
\int_{\scriptsize{\R}}&
\left\langle
\left[\sum_{Q\in\Q; 2^\#{|Q|} \sim |Q'|}
|Q|^{\frac 3 2}
\int_{\scriptsize{\R}}
\left\langle f_1, \Phi_{x'}^{{Q_1}, \s, 1} \right\rangle
\left\langle  f_2,\Phi_{x'}^{{Q_2}, \s, 2} \right\rangle
\overline\Phi_{x'}^{{Q_3}, \s, 3}
dx'
\right],\;
\Phi_{x}^{{Q'_1}, \n,1}
\right\rangle\nonumber\\
&\left\langle  f_3, \Phi_{x}^{{Q'_2},\n, 2} \right\rangle
\left\langle  f_4, \Phi_{x}^{{Q'_3}, \n, 3}\right \rangle
dx,\label{14}
\end{align}
where $\Phi_{x}^{Q_j, \s,j}(y)$ is defined by $|Q|^{-1/2} \overline{\check \phi_{Q_j,\s,j}(x-y)}$ and we define $\Phi_{x'}^{Q'_j, \n,j}(y) $ accordingly. Finally, by defining $\Phi_{P_j,t,\n, j}:=\Phi^{Q'_j,\n,j}_{ x_{\vec P}+|I_{\vec P}| t} $ and $\Phi_{Q_j,t',\s,j}:=\Phi^{Q_j,\s,j}_{x_{\vec Q}+|I_{\vec Q}| t'}$,
where $x_{\vec P}$ is the center of $I_{\vec P}$, we have that (\ref{14}) is equal to
$$\int_0^1 \int_0^1 \sum_{\vec P : Q_{\vec P} \in \Q'} \frac 1 {|I_{\vec P}|^{1/2}}
\left\langle B_{P_1,\s,t'}(f_1, f_2), \Phi_{P_1,t,\n,1} \right\rangle
\left\langle  f_3, \Phi_{P_2,t,\n,2} \right\rangle
\left\langle  f_4, \Phi_{P_3,t,\n,3}\right \rangle
dtdt',
$$
where 
$$ B_{P_1,\s,t'}(f_1, f_2)
=\sum_{\vec Q : Q_{\vec Q} \in \Q , \atop
 -\omega_{Q_3} \subset \omega_{P_1}, 
 2^\#{| \omega_{Q_3}|} \sim |\omega_{P_1}|}
 \frac 1 {|I_{\vec Q}|^{1/2}}
\left\langle f_1, \Phi_{Q_1,t',\s,1} \right\rangle
\left\langle  f_2, \Phi_{Q_2,t',\s,2} \right\rangle
\Bar \Phi_{Q_3, t',\s,3}
$$
and where $\vec P$ and $\vec Q$ range over all tri-tiles with frequency cube $Q_{\vec P}:=Q'$ and  $Q_{\vec Q}:=Q$, respectively. Observe that the collection of tri-tiles $\vec P$ and that of tri-tiles $\vec Q$ have rank 1 by construction. The condition $ -\omega_{Q_3} \subset \omega_{P_1}$ is automatic for nonzero summands. By redefining $Q_3$ to be $-Q_3$ and redefining $ \Phi_{Q_3, t',\s,3}$ accordingly, we can replace $\Bar \Phi_{Q_3, t',\s,3}$ by $ \Phi_{Q_3, t',\s,3}$ and the constraint $ -\omega_{Q_3} \subset \omega_{P_1}$ by $\omega_{Q_3} \subset \omega_{P_1}$ in the expression of $B_{P_1,\s,t'}$. Then Theorem 3.5 follows by integrating the conclusion of Theorem 4.9 over $t, t'$ because of the uniformity assumption of Theorem 4.9 (modulo $L^p$ estimates for the operator corresponding to
$m_{\I, R_M}$
 in (\ref{8}), which we show in Section 6). By the usual limiting argument, we can get rid of the finiteness condition on the collections $\vec \P$ and $\vec \Q$ of $\vec P$ and $\vec Q$, respectively. Finally, we have an operator, as those in Theorem 4.9, in terms of wave packets which are perfectly localized in frequency, but not in time space.

\vskip 1cm

\section{Fourier Coefficients}
In this section, we will prove the boundedness conditions of the Fourier coefficients $C_{\n}^{{{Q'}}}$ and $C_{\s}^{Q,\l}$ in $(\ref{fc1})$ and $(\ref{fc2})$, respectively,  which were necessary properties in the previous chapter. First, recall that
$$C_{\n}^{{{Q'}}} = \frac 1 {|{Q'}|^2} \int_{\scriptsize{\R^2}} m_2(\xi_2, \xi_3) \phi_{{Q'}}(\xi_2, \xi_3) e^{-2\pi i n_1 {\xi_2 \over {|{Q'}|}}} e^{-2\pi i n_2 {\xi_3 \over {|{Q'}|}}}d\xi_2 d\xi_3,$$  and 
$$C_{\s}^{Q,\l} = \frac 1 {{|{Q}|^2}} \int_{\scriptsize{\R^2}} m_1(\xi_1, \xi_2) \phi_Q(\xi_1, \xi_2)\left(\frac{\xi_2-\xi_1}{{|{Q}|}}\right)^\l \frac 1 {\l!}  e^{-2\pi i s_1 {\xi_1 \over {|{Q}|}}} e^{-2\pi i s_2 {\xi_2 \over {|{Q}|}}}d\xi_1 d\xi_2.$$
\begin{lemma}
$|C_{\n}^{{{Q'}}}| \lesssim C(\n)$, which is not depending on $Q'$, where $\n:=(n_1, n_2)$$\in\Z^2$. Similarly,
 $|C_{\s}^{Q,\l}| \lesssim C(\s)$, which is not depending on $Q$ and $\l$,
where $\s:=(s_1, s_2)$$\in\Z^2$. 
\end{lemma}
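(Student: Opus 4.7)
The plan is to bound both $|C_\n^{Q'}|$ and $|C_\s^{Q,\l}|$ via the classical integration-by-parts argument that produces rapid decay for Fourier coefficients of smooth, compactly supported functions. The main issue is checking that the derivative bounds on the integrand are uniform in $Q'$, $Q$, and $\l$; once that is in hand, the rest is routine.

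For the first coefficient I would first rescale by setting $\eta_j := \xi_j / |{Q'}|$, so that
\begin{equation*}
C_{\n}^{{Q'}} = \int_{\R^2} m_2(|{Q'}|\eta_2,\, |{Q'}|\eta_3)\, \phi_{{Q'}}(|{Q'}|\eta_2,\, |{Q'}|\eta_3)\, e^{-2\pi i (n_1 \eta_2 + n_2 \eta_3)}\, d\eta_2\, d\eta_3.
\end{equation*}
The integrand is supported in a translate of $\frac{8}{10}\, {Q'}/|{Q'}|$, a set of diameter $O(1)$. On the support of $\phi_{Q'}$ one has $\dist(\Gamma_2, \xi) \sim |{Q'}|$, so the hypothesis on $m_2$ gives $|\partial^{\alpha} m_2(\xi)| \lesssim |{Q'}|^{-|\alpha|}$, which after rescaling becomes an $O(1)$ bound. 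The adapted estimate $|\partial^\alpha \phi_{Q'}| \lesssim |{Q'}|^{-|\alpha|}$ rescales the same way, and by Leibniz the rescaled integrand has all derivatives bounded uniformly in $Q'$. Integrating by parts $N$ times in each of $\eta_2, \eta_3$ then yields $|C_{\n}^{Q'}| \lesssim (1+|n_1|)^{-N}(1+|n_2|)^{-N}$ for every $N$, which is a rapidly decreasing sequence $C(\n)$ independent of $Q'$.

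For the second coefficient I do the same rescaling $\eta_j := \xi_j/|Q|$, turning $(\xi_2 - \xi_1)^\l / |Q|^\l$ into $(\eta_2 - \eta_1)^\l$ and reducing matters to estimating the Fourier transform of
\begin{equation*}
m_1(|Q|\eta_1, |Q|\eta_2)\, \phi_{Q}(|Q|\eta_1, |Q|\eta_2)\, \frac{(\eta_2-\eta_1)^\l}{\l!}.
\end{equation*}
The $m_1$ and $\phi_Q$ factors are handled exactly as in the previous case because $\dist(\Gamma_1, \xi) \sim |Q|$ on the support of $\phi_Q$. The polynomial factor is where the uniformity in $\l$ enters: differentiating $(\eta_2-\eta_1)^\l$ a total of $|\alpha|$ times produces $\l!/(\l-|\alpha|)!$ when $|\alpha|\le \l$ (and zero otherwise), and the prefactor $1/\l!$ reduces this to $1/(\l-|\alpha|)! \le 1$. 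Since $|\eta_2 - \eta_1|$ is bounded on the support, $|\partial^\alpha [(\eta_2-\eta_1)^\l/\l!]| \lesssim 1$ uniformly in $\l$, and Leibniz plus integration by parts delivers $|C_{\s}^{Q,\l}| \lesssim C(\s)$ with $C(\s)$ rapidly decreasing and independent of both $Q$ and $\l$.

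The only genuine obstacle is maintaining uniformity in $\l$ for $C_{\s}^{Q,\l}$, and the $1/\l!$ normalization built into the Taylor expansion is exactly what is needed to absorb the combinatorial blow-up from differentiating the monomial $(\eta_2-\eta_1)^\l$. Everything else is a standard rescaling and integration-by-parts computation.
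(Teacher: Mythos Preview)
Your proposal is correct and follows essentially the same approach as the paper: rescale to unit scale using the relation $\dist(\Gamma_i,\xi)\sim |Q|$ (resp.\ $|Q'|$) on the support of $\phi_Q$ (resp.\ $\phi_{Q'}$), then integrate by parts to obtain rapid decay in $\n$ and $\s$. In fact you are more explicit than the paper about the uniformity in $\l$, correctly observing that the $1/\l!$ normalization controls the derivatives of $(\eta_2-\eta_1)^\l$; the paper simply asserts this step.
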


   \begin{proof}
 Let $|{Q}|=2^{k_1}$ and $|{{Q'}}|=2^{k_2}$ for $k_1, k_2 \in \Z$. We denote $m_2(\xi_2, \xi_3) \phi_{{Q'}}(\xi_2, \xi_3)$ by $m_{2, Q'}(\xi_2, \xi_3)$, which is the smooth restriction for the symbol $m_2$ to the cube $Q'$. Since $m_{2, Q'}$ is smooth, we can integrate by parts as much as we want. Before doing that, we have
 $$C_{\n}^{{{Q'}}} =\int_{\scriptsize{\R^2}} m_{2, Q'}(2^{k_2}\xi_2, 2^{k_2}\xi_3) e^{-2\pi i (n_1 \xi_2+ n_2 \xi_3)}d\xi_2 d\xi_3$$ 
 and because of the support $Q'$ of $m_{2,Q'}$, we can see that
 $|2^{k_2} \xi_2 -2^{k_2} \xi_3| \sim 2^{k_2}$ for any $(\xi_2, \xi_3) \in Q'$.
Then after taking integration by parts sufficiently many times, we have
$$ |C_{\n}^{{{Q'}}}| \lesssim  \frac {1}{(1+|n_1| )^M} \frac {1}{(1+|n_2|)^M} \leq \frac {1}{(1+|n_1| +|n_2|)^M}$$
for some large constant $M \in \Z$,
 because we have that $$|\partial^{\alpha}({m_{2, Q'}} (\xi, \eta))| \leq \frac {1}{\dist(\Gamma_2, (\xi, \eta))^{|\alpha|}}\sim \frac {1}{2^{{k_2}|\alpha|}},$$ 
 where $\Gamma_2=\{(\xi_2, \xi_3)\in \R^2 : \xi_2=\xi_3\}$
 so that $|\partial^{\alpha}({m_{2, Q'}} (2^{k_2}\xi_2, 2^{k_2}\xi_3))| \sim 1.$
We note that  $ \frac {1}{(1+|n_1| +|n_2|)^M}$ is a rapidly decreasing sequence depending only on $\n$, and not on $Q'$.

 Similarly, we can see that
$$ C_{\s}^{{{Q},\l}} =\int_{\scriptsize{\R^2}} m_1(2^{k_1}\xi_1, 2^{k_1}\xi_2)\phi_{Q}(2^{k_1}\xi_1, 2^{k_1}\xi_2)\left({\xi_2-\xi_1}\right)^\l \frac 1 {\l!}     e^{-2\pi i (s_1 \xi_1 + s_2 \xi_2 )}d\xi_1 d\xi_2.$$
Because of the support $Q$ of $\phi_{Q}$, we have
 $|2^{k_1}\xi_1 -2^{k_1}\xi_2| \sim 2^{k_1}.$
 Thus, we can easily see that 
 $$ C_{\s}^{{{Q},\l}} \lesssim  \frac {1} {(1+|s_1| +|s_2|)^{M'}}$$
 for some large constant $M' \in \Z$.
 We also note that  $ \frac {1} {(1+|s_1| +|s_2|)^{M'}}$
 is a rapidly decreasing sequence depending only on $\s$, and not on $Q, \l$.
 \end{proof}

 \vskip 1cm

 \section{Remainder Term}
 
 In this section, we will obtain $L^p$ estimates for the operator corresponding to $m_{\I,R_M}$ in (\ref{8}).
 Let us recall from (\ref{8}) that $m_{\I,R_M}$ be given by
 \begin{equation}\label{rmt}
  \sum_{Q, {{Q'}}; |Q|\ll|{Q'}| } m_1(\xi_1, \xi_2) \phi_Q(\xi_1, \xi_2) R_M(\xi_1, \xi_2)\phi_{Q'_2, \n,2}(\xi_3).
  \end{equation}
Here, the remainder term in the Taylor series in (\ref{remainder}) is defined by
$$R_M(\xi_1, \xi_2)= \frac 1 {M!} \phi_{Q'_1, \n,1}^{(M)}(\xi_{\theta})\left(\frac{\xi_2-\xi_1}{2}\right)^{M}$$
for a big number $M\in \Z$ and $\xi_{\theta} =(1-\frac \theta 2)\xi_2 +\frac \theta 2 \xi_1$ for some $\theta$ between $0$ and $1$.

 Let $|{Q}|=2^{k_1}$ and $|{{Q'}}|=2^{k_2}$ for $k_1, k_2 \in \Z$. Then, we have that (\ref{rmt}) is equal to 
 $$\sum_{\#\geq 1000}
 \sum_{Q, {{Q'}}\atop k_2-k_1=\# } m_1(\xi_1, \xi_2) \phi_Q(\xi_1, \xi_2)  \phi_{Q'_1, \n,1}^{(M)}(\xi_{\theta})\left(\frac{\xi_2-\xi_1}{2}\right)^{M} \frac 1 {M!} \phi_{Q'_2, \n,2}(\xi_3)$$
\begin{align}
=\sum_{\#\geq 1000}
\sum_{Q, {{Q'}}\atop  k_2-k_1+1=\# }
&
2^{(k_1-1) M}2^{-k_2 M}
\left( 
m_1(\xi_1, \xi_2) \phi_Q(\xi_1, \xi_2)
 \left(\frac{\xi_2-\xi_1}{2^{k_1}}\right)^{M} \frac 1 {M!} \right)\cdot \nonumber\\
& 
 2^{k_2 M}   \phi_{Q'_1, \n,1}^{(M)}(\xi_{\theta}) \phi_{Q'_2, \n,2}(\xi_3).
\label{15}
\end{align}
We compute a double Fourier series of $m_1(\xi_1, \xi_2) \phi_Q(\xi_1, \xi_2)
 \left(\frac{\xi_2-\xi_1}{2^{k_1}}\right)^{M} \frac 1 {M!}$ to see that (\ref{15}) is equal to
 \begin{equation}\label{296}
\sum_{\#\geq 1000}
\sum_{Q, {{Q'}}\atop k_2-k_1=\# }2^{-(\#+1) M}
\left( \sum_{\s \in  \scriptsize{\Z^2}} C_{\s}^{{Q}} \phi_{Q_1, \s,1}(\xi_1)\phi_{Q_2, \s,2}(\xi_2) \right)
 2^{k_2 M}   \phi_{Q'_1, \n,1}^{(M)}(\xi_{\theta}) \phi_{Q'_2, \n,2}(\xi_3),
\end{equation}
where $C_{\s}^{{Q}}=C_{\s}^{Q,\l}$ in (\ref{fc2}) when $\l=M$. Thus, we can obtain that $|C_{\s}^{Q,M}| \lesssim C(\s)$ by the same proof in Section 5. Furthermore,
we observe that if a dyadic interval ${Q'_1}$ is given by ${2^{k_2}}[m, m+1]$, for some $m \in \Z$, then $2^{k_2 M}   \phi_{Q'_1, \n,1}^{(M)}(\xi_{\theta})$ can be denoted by
 $\widetilde{\phi}^{(M)}_{n_1}(\frac {\xi_{\theta}}{2^{k_2}}-m)$, where $\widetilde{\phi}_{n_1}(\eta):=\phi(\eta)e^{2\pi i n_1 \eta}$ for a bump function $\phi$ adapted to $[0, 1]$. Therefore, (\ref{296}) can be majorized by
\begin{equation}\label{16}
\sum_{\s \in \scriptsize{\Z^2}} C(\s)
\sum_{\#\geq 1000}2^{-(\#+1) M}
\sum_{Q, {{Q'}}\atop k_2-k_1=\# }
 \phi_{Q_1, \s,1}(\xi_1)\phi_{Q_2, \s,2}(\xi_2) 
 \widetilde{\phi}^{(M)}_{n_1}\left(\frac {\xi_{\theta}}{2^{k_2}}-m\right)
  \phi_{Q'_2, \n,2}(\xi_3).
\end{equation}
 
 For simplicity, we denote $$\widetilde{m}_\#:=\sum_{Q, {{Q'}}\atop k_2-k_1=\# } \phi_{Q_1, \s,1}(\xi_1)\phi_{Q_2, \s,2}(\xi_2) 
  \widetilde{\phi}^{(M)}_{n_1}\left(\frac {\xi_{\theta}}{2^{k_2}}-m\right)
  \phi_{Q'_2, \n,2}(\xi_3).$$
Then, it is easy to remark that the operator corresponding to $m_{\I,R_M}$ can be majorized by $\sum_{\s \in  \scriptsize{\Z^2}} C(\s) \sum_{\#\geq 1000} 2^{-(\#+1) M} T_\#$, where $T_\#$ has a symbol $\widetilde{m}_\#$. Now, we claim that the symbol $\widetilde{m}_\#$ satisfies
\begin{equation}\label{RRM}
|\partial^{\alpha}(\widetilde{m}_\#(\xi))|\lesssim 
2^{\#|\alpha|}
 \frac{1}{\dist(\Gamma,  \xi)^{|\alpha|}}
\end{equation}
 for many multi-indices $\alpha$, where $\Gamma = \{(\xi_1, \xi_2, \xi_3)\in R^3: \xi_1=\xi_2=\xi_3\}$. If this claim holds, then the Coifman-Meyer Theorem
implies that $T_{\#}$ is bounded with a bound that is of type $O(2^{100\#})$, say.
This completes the proof for the desired $L^p$ estimates for the operator corresponding to $m_{\I,R_M}$ because of the big decaying factor $2^{-(\#+1) M}$ in (\ref{16}). These
arguments show that after justifying the claim (\ref{RRM}), Theorem 4.9 is the only one that remains
to be proved. 

In order to prove the claim (\ref{RRM}), fix $\xi_o \in \R^3$. Then, there exists a unique shifted dyadic quasi-cubes $Q_1 \times Q_2 \times Q_2'$ containing $\xi_o$ in $\R^3$ so that we obtain 
$$|
\partial^{\alpha}(\widetilde{m}_\#(\xi_o))|
 \leq
 \left| \partial^{\alpha}   \left[ \phi_{Q_1, \s,1}(\xi_1) \phi_{Q_2, \s,2}(\xi_2) 
  \widetilde{\phi}^{(M)}_{n_1}\left(\frac {\xi_{\theta}}{2^{k_2}}-n\right)
   \phi_{Q'_2, \n,2}(\xi_3)
\right]\bigg|_{\xi =\xi_o}
 \right|$$
 because of localization of $\phi_Q$ functions. Then, it is easy to check that 
 $$|
\partial^{\alpha}(\widetilde{m}_\#(\xi_o))|
\lesssim 2^{\#|\alpha|} \cdot 2^{-k_2 |\alpha|}
\lesssim
2^{\#|\alpha|}
 \frac{1}{\dist(\Gamma,  \xi_o)^{|\alpha|}}$$
 as $\dist(\Gamma,  \xi_o) \lesssim 2^{k_2}$ where $\xi_o \in Q_1 \times Q_2 \times Q_2'$. This completes the proof of the claim (\ref{RRM}).

\vskip 1cm

\section{Tile Norms and Known Estimates}
In order to establish the estimates (\ref{first}) for the forms $\Lambda^{\#}_{\vec\P, \vec\Q}$,
the standard approach is to organize our collections of tri-tiles $\vec\P,\vec\Q$ into trees as in \cite{GL} and consider the standard tile norms. We first review standard definitions and comments for trees from \cite{MTT2}. We will henceforth assume that $\vec\P$ and $\vec\Q$ are sparse of rank 1.

\begin{definition} For any $i=1,2,3$ and a tri-tile $\vec P_T \in \vec\P$, define a i-tree with top $\vec P_T$ to be a collection $T \subseteq \vec \P$ of tri-tiles such that
$$P_i \leq P_{T,i} \:for\: all\: \vec P \in T,$$
where $P_{T,i}$ is the $i$ component of $\vec P_T$ . We write $I_T$ and $\omega_{T,i}$ for $I_{\vec P_T}$ and $\omega_{P_{T},i}$ respectively.
We say that $T$ is a tree if it is a i-tree for some  $i=1,2,3$.
\end{definition}

Observe that a tree $T$ does not necessarily have to contain its top $\vec P_T$ .

\begin{definition} Fix $1 \leq i \leq 3$. Two trees $T, T'$ are said to be strongly $i$-disjoint if
\begin{enumerate}
\item $P_i \neq P'_i$ for all $\vec P \in T, \vec P' \in T'$.
\item For any $\vec P \in T, \vec P' \in T'$ with $2\omega_{P_i} \cap 2\omega_{P'_
i}\neq \emptyset$, one obtains $I_{\vec P' } \cap I_T = \emptyset$ and $I_{\vec P} \cap I_{T'} = \emptyset$.
\end{enumerate}
\end{definition}
Note that if $T$ and $T'$ are strongly $i$-disjoint, then we can see that two tiles $I_P \times 2\omega_{P_i}$ and $I_{P'} \times 2\omega_{P'_i}$ are disjoint for all $\vec P \in T, \vec P' \in T'$. Given that $\vec\P$ is sparse, we can see that if $T$ is an $i$- tree, then we have either
$\omega_{P_j} = \omega_{P'_j}$ or
$2\omega_{P_j} \cap 2\omega_{P'_j}= \emptyset$ for all $\vec P , \vec P' \in T$ and $j \neq i$. 
\vskip.5cm

We now recall the standard tile norms from \cite{MTT1} and \cite{MTT2}.
In the remainder of this paper we shall estimate expressions of the form
\begin{equation}\label{discrete}
\left| \sum_{\vec P \in \vec\P} \frac 1 {|I_{\vec P}|^{1/2}} a_{P_1}^{(1)}a_{P_2}^{(2)}a_{P_3}^{(3)}\right|,
\end{equation}
where $\vec \P$ is a collection of tri-tiles and for $i= 1, 2, 3$, $a_{P_i}^{(i)}$ are complex numbers for $P_i$, where $\vec P=(P_1, P_2, P_3) \in \vec \P.$

In some cases such as the associated expression for the Bilinear Hilbert transform, we just have
\begin{equation}\label{17}
a_{P_i}^{(i)} = \langle f_i, \Phi_{P_i} \rangle,
\end{equation}
for $i=1,2,3$,
however, here we will have more complicate sequences $a_{P_3}^{(3)}$ when dealing with $\Lambda^{\#}_{\vec \P,\vec \Q}$.

In \cite{MTT1} the following standard tile norms were introduced:

\begin{definition}Suppose that $\vec \P$ is a finite collection of tri-tiles. For $i= 1, 2, 3$, suppose that $(a_{P_i}^{(i)})_{ \vec P\in\vec \P}$ is a sequence of complex numbers. We define the size of the sequence by
$$\size_i((a_{P_i}^{(i)} )_{ \vec P\in\vec \P} ) := \sup_{T\subset \vec\P}( \frac 1 {|I_T|} 
\sum_{\vec P\in T}
|a_{P_i}^{(i)}|^2)^{1/2},$$
where the supremum ranges over all j-trees $T$ in $\vec \P$ for some $j \neq i$. 

We also define the energy of the sequence by
$${\energy}_i((a_{P_i}^{(i)})_{\vec P\in\vec \P}) := \sup_{n\in  \scriptsize{\Z}}\sup_{\T}2^n(\sum_{T\in\T}
|I_T |)^{1/2},$$ 
where the inner supremum ranges over all collections $\T$ of strongly $i$-disjoint trees in $\vec \P$ such that
$$(\sum_{\vec P\in T} |a_{P_i}^{(i)} |^2)^{1/2} \geq 2^n |I_T |^{1/2}$$
for all $T \in \T$, and
$$(\sum_{\vec P\in T'} |a_{P_i}^{(i)} |^2)^{1/2} \leq 2^{n+1} |I_{T'} |^{1/2}$$
for all sub-trees $T' \subset T \in \T$.

\end{definition}

Observe that the number $a_{P_i}^{(i)}$ in Definition 7.3 is only associated with the i-tile $P_i$ rather than the full tri-tile $\vec P$. We can understand the size of a sequence as a measure the extent to which the sequence can concentrate on a single
tree. 

Since the size can be thought of as a phase-space variant of the BMO norm, we obtain the following relevant variant of the John-Nirenberg inequality for the size:

\begin{lemma}
Suppose that $\vec \P$ is a finite collection of tri-tiles and that for $i = 1, 2, 3$, $(a_{P_i}^{(i)} )_{ \vec P\in \vec\P}$ is a
sequence of complex numbers. Then
$$\size_i((a_{P_i}^{(i)} )_{ \vec P\in\vec \P} )\sim \sup_{T \subset \vec\P}  \frac 1 {|I_T|} \| 
(\sum_{\vec P\in T}
|a_{P_i}^{(i)}|^2 \frac {\chi_{I_{\vec P}}} {|I_{\vec P}|}
)^{1/2}\|_{L^{1, \infty}(I_T)}
$$
where the supremum ranges over all j-trees $T\subset \vec \P$ for some $j \neq i$.
\end{lemma}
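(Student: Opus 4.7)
The plan is to establish both directions of the equivalence separately, with the harder one being a John-Nirenberg type inequality adapted to the tree structure.

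For the easy direction---that the right-hand side is controlled by the left---I would fix any $j$-tree $T \subset \vec\P$ with $j \neq i$ and set
$$F_T(x) := \Bigl(\sum_{\vec P \in T} |a_{P_i}^{(i)}|^2 \frac{\chi_{I_{\vec P}}(x)}{|I_{\vec P}|}\Bigr)^{1/2},$$
so that $\|F_T\|_{L^2(I_T)}^2 = \sum_{\vec P \in T} |a_{P_i}^{(i)}|^2$ by a direct computation. Bounding the distribution function $t\,|\{F_T > t\} \cap I_T|$ by the minimum of $t|I_T|$ and $\|F_T\|_{L^2(I_T)}^2/t$ and optimizing in $t$ gives $\|F_T\|_{L^{1,\infty}(I_T)} \leq |I_T|^{1/2}\|F_T\|_{L^2(I_T)}$; after dividing by $|I_T|$ and taking the supremum over $T$, this yields the right-hand side $\lesssim$ left-hand side.

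For the harder direction I plan a Calder\'{o}n-Zygmund stopping-time argument. Let $N$ denote the right-hand side of the claimed equivalence, fix a $j$-tree $T_0 \subset \vec\P$ with $j \neq i$, and write $S(T) := \sum_{\vec P \in T} |a_{P_i}^{(i)}|^2$; the target bound is $S(T_0) \lesssim N^2 |I_{T_0}|$. For a large absolute constant $C$, I would set $E := \{x \in I_{T_0} : F_{T_0}(x) > CN\}$, so that the weak-type hypothesis immediately yields $|E| \leq |I_{T_0}|/C$. Decomposing $E$ into maximal dyadic sub-intervals and arranging each of them to be the top interval $I_{T_k}$ of a sub-tree $T_k \subseteq T_0$ of the same $j$-type, one obtains the inductive relation
$$S(T_0) \leq (CN)^2 |I_{T_0}| + \sum_k S(T_k),$$
with $\sum_k |I_{T_k}| \leq |E| \leq |I_{T_0}|/C$. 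Iterating this estimate on each $T_k$ produces a geometric series in $C^{-1}$ which closes to give $S(T_0) \lesssim N^2 |I_{T_0}|$, as required.

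The main obstacle will be organizing the stopping-time selection so that each maximal dyadic piece of $E$ indeed coincides with the top interval of a bona fide $j$-tree $T_k \subset \vec\P$; only then may the $L^{1,\infty}$ hypothesis be invoked on $T_k$. This is precisely where the sparseness (Definition 4.5) and rank $1$ (Definition 4.7) hypotheses on $\vec\P$, together with the $j$-tree structure of $T_0$, come into play: for any selected interval $I' \subset I_{T_0}$, one must verify that the tri-tiles $\vec P \in T_0$ with $I_{\vec P} \subset I'$ have $j$-components nested in a common top so that they really form a $j$-tree with a well-defined top, and that the relevant frequency scales decouple cleanly from the ambient $T_0$. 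Once these sub-trees are extracted correctly, the remainder of the iteration is the standard one carried out in the analogous lemmas in \cite{MTT1} and \cite{MTT2}.
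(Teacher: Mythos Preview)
The paper gives no proof here; it simply cites Lemma~4.2 of \cite{MTT1}. Your sketch is precisely the argument one finds there: the trivial direction via $\|F_T\|_{L^{1,\infty}(I_T)} \le |I_T|^{1/2}\|F_T\|_{L^2}$, and the substantive direction via a Calder\'on--Zygmund stopping time iterated into a geometric series. You are also right that the only genuine issue is arranging each stopped collection $T_k=\{\vec P\in T_0: I_{\vec P}\subseteq J_k\}$ to be a $j$-tree with top interval $J_k$, so that the $L^{1,\infty}$ hypothesis can be reapplied with the correct normalization $|J_k|^{-1}$.

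One correction, though: sparseness and rank~$1$ are not what resolve this point (indeed the lemma as stated assumes neither). The fix in \cite{MTT1} is simply that both suprema in the statement are insensitive to whether tree tops are required to lie in $\vec\P$. Allowing a formal top with spatial interval $J_k$ and frequency interval of length $|J_k|^{-1}$ containing the center of $\omega_{T_0,j}$ makes $T_k$ a $j$-tree with $I_{T_k}=J_k$; and the suprema over this enlarged family of trees coincide with the original ones, since any tree with a formal top decomposes into $O(1)$ trees whose tops are maximal elements of the tree and hence lie in $\vec\P$. With that adjustment your iteration closes exactly as you wrote it.
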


\begin{proof}
See Lemma 4.2 in \cite{MTT1}.
\end{proof}

By duality argument we can see that the following lemma:
\begin{lemma} 
Suppose that $\vec \P$ is a finite collection of tri-tiles and that $(a_{P_i}^{(i)} )_{ \vec P\in \vec\P}$, for $i = 1, 2, 3$, is a
sequence of complex numbers. 
Then, there
exists a collection $\T$ of strongly i-disjoint trees, and complex coefficients $c_{P_i}$, for all $\vec P \in \bigcup_{T \in \T} T$, such that
     $${\energy}_i((a_{P_i}^{(i)})_{ \vec P\in\vec \P}) \sim | \sum_{T \in \T} \sum_{\vec P \in T} a_{P_i}^{(i)} \bar{c}_{P_i} |,$$
and such that
$$\sum_{\vec P \in T'} |c_{P_i}|^2 \lesssim \frac{|I_{T'}|} {\sum_{T \in \T} |I_T|}$$
for all $T \in \T$ and all sub-trees $T' \subseteq T$ of $T$.
\end{lemma}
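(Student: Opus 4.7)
My plan is a direct duality construction, following the standard pattern for such energy-duality lemmas. The first step is to unpack the definition of $\energy_i$ in order to extract a near-extremizing pair $(n^*, \T^*)$ — an integer and a collection of strongly $i$-disjoint trees in $\vec \P$ — whose associated quantity $2^{n^*}(\sum_{T \in \T^*}|I_T|)^{1/2}$ lies within a factor of two of $\energy_i((a_{P_i}^{(i)})_{\vec P \in \vec\P})$, while still obeying both the per-tree lower bound $(\sum_{\vec P \in T}|a_{P_i}^{(i)}|^2)^{1/2} \geq 2^{n^*}|I_T|^{1/2}$ and the sub-tree upper bound $(\sum_{\vec P \in T'}|a_{P_i}^{(i)}|^2)^{1/2} \leq 2^{n^*+1}|I_{T'}|^{1/2}$ built into Definition 7.3. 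Since the energy is a double supremum, such a near-extremizer exists at no cost to the final constants.

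Given $(n^*,\T^*)$, I would then define the coefficients by the explicit normalization
$$c_{P_i} := \frac{a_{P_i}^{(i)}}{2^{n^*+1}\bigl(\sum_{T \in \T^*}|I_T|\bigr)^{1/2}} \qquad \text{for } \vec P \in \bigcup_{T \in \T^*} T,$$
and zero otherwise. The required control $\sum_{\vec P \in T'}|c_{P_i}|^2 \lesssim |I_{T'}|/\sum_T |I_T|$ is then a one-line computation: substitute the definition of $c_{P_i}$ and invoke the sub-tree upper bound.

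Finally, I would verify the two-sided comparison for the pairing $\Sigma := \sum_{T \in \T^*} \sum_{\vec P \in T} a_{P_i}^{(i)} \overline{c_{P_i}}$. The lower bound $|\Sigma| \gtrsim \energy_i$ is automatic, since $\Sigma$ equals $\bigl(2^{n^*+1}(\sum|I_T|)^{1/2}\bigr)^{-1}\sum_T\sum_{\vec P \in T}|a_{P_i}^{(i)}|^2$, which by the per-tree lower bound is at least a constant times $2^{n^*}(\sum|I_T|)^{1/2}$. The matching upper bound $|\Sigma| \lesssim \energy_i$ follows from Cauchy--Schwarz within each tree, followed by applying the sub-tree upper bound with $T'=T$, and then summing over $T\in\T^*$. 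The proof is essentially bookkeeping of constants; the only point worth flagging is that both properties of $(n^*,\T^*)$ needed in the argument — the per-tree lower bound and the sub-tree upper bound — are already packaged into Definition 7.3, so no additional stopping-time selection is required, and there is no real obstacle beyond choosing the correct normalization for $c_{P_i}$.
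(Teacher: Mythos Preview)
Your proposal is correct and matches the paper's approach exactly: the paper's one-line proof also selects the extremizing pair $(n,\T)$ from Definition~7.3 and sets $c_{P_i}$ proportional to $(\sum_{T\in\T}|I_T|)^{-1/2}a_{P_i}^{(i)}$, with the sub-tree bound and the two-sided comparison then following just as you outline. Your normalization $1/2^{n^*+1}$ is the correct choice---the paper writes $2^n$ in the numerator, which is evidently a typo for $2^{-n}$, since otherwise the sub-tree estimate would acquire an uncontrolled factor $2^{4n}$.
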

\begin{proof}
The proof follows by setting that $n$, $\T$ are the extremizers in the definition of the energy and that 
$c_{P_i}:= 2^n({\sum_{T \in \T} |I_T|})^{-1/2} a_{P_i}^{(i)}$ for all $\vec P \in \bigcup_{T \in \T} T$. 
\end{proof}



To establish the estimates on (\ref{discrete}), we recall the following standard combinatorial tool from \cite{MTT1}:
\begin{proposition}
Suppose that $\vec \P$ is a finite collection of tri-tiles and that $(a_{P_i}^{(i)} )_{ \vec P\in \vec\P}$, for $i = 1, 2, 3$, is a
sequence of complex numbers. 
 Then
$$| \sum_{ \vec P\in\vec \P}  \frac 1 {|I_{\vec P}|^{1/2}}
 a^{(1)}_{P_1}  a^{(2)}_{P_2} a^{(3)}_{P_3}|
  \lesssim
   \prod_{i=1}^3 
\size_i((a^{(i)}_{P_i} )_{ \vec P\in \vec \P} )^{\theta_i}{\energy}_i((a^{(i)}
_{P_i} )_{ \vec P\in \vec \P} )^{1-\theta_i} $$
for any $0 \leq \theta_1, \theta_2, \theta_3< 1$ with $\theta_1+\theta_2+\theta_3=1$, with the implicit constant depending on the $\theta_i$.
\end{proposition}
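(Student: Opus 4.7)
The plan is to follow the standard tree-selection paradigm of \cite{MTT1}: combine a greedy decomposition at dyadic scales of size with a clean single-tree estimate, and then interpolate via the parameters $\theta_i$. The whole proof is built from three separate ingredients, which I describe below.

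The first ingredient is a \emph{single-tree estimate}: for any tree $T \subset \vec\P$,
$$\Bigl|\sum_{\vec P \in T} \frac{1}{|I_{\vec P}|^{1/2}} a^{(1)}_{P_1} a^{(2)}_{P_2} a^{(3)}_{P_3}\Bigr| \lesssim |I_T| \prod_{i=1}^{3} \size_i\bigl( (a^{(i)}_{P_i})_{\vec P \in T}\bigr).$$
If $T$ is a $j$-tree, I would treat the index $j$ as the ``tree direction,'' where the John--Nirenberg-type identity of Lemma 7.4 applies, while for $i \neq j$ the strong disjointness of the $i$-tiles in a $j$-tree (coming from rank $1$) makes the sequence $(a^{(i)}_{P_i})$ behave like an almost-orthogonal system. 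The estimate then follows from Cauchy--Schwarz in the $j$-direction combined with Bessel-type inequalities in the other two directions.

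The second ingredient is a \emph{decomposition lemma}: for each index $i$ and each $n \in \Z$, one can split $\vec\P = \vec\P_n^{(i)} \sqcup \vec\P'$, where $\vec\P_n^{(i)} = \bigsqcup_{T \in \T_n^{(i)}} T$ is a disjoint union of strongly $i$-disjoint trees satisfying
$$\sum_{T \in \T_n^{(i)}} |I_T| \lesssim 2^{-2n}\, \energy_i\bigl((a^{(i)}_{P_i})_{\vec P\in \vec\P}\bigr)^{2},$$
while the residue $\vec\P'$ has $\size_i \leq 2^n$. This is produced by a greedy selection: one repeatedly extracts maximal trees $T$ along which $(\tfrac{1}{|I_T|}\sum_{\vec P \in T} |a^{(i)}_{P_i}|^{2})^{1/2} \gtrsim 2^n$; the measure bound comes from the dual characterization of the energy in Lemma 7.5 (applied to the coefficients $c_{P_i}$ supplied by that lemma). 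Applying this iteratively in $i=1,2,3$ yields a stratification $\vec\P = \bigsqcup_{n_1,n_2,n_3} \vec\P_{n_1,n_2,n_3}$ on which all three sizes are controlled simultaneously by $2^{n_i}$, with the total measure of the trees at each scale bounded appropriately.

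Finally, summing the single-tree estimate over this stratification gives, on each piece, a bound of the form $\prod_i \min(\size_i,\, 2^{n_i}) \cdot 2^{-2n_i}\energy_i^2$ (after bookkeeping), and the three resulting geometric series in $n_1,n_2,n_3$ converge because one truncates each summation at $n_i$ where $2^{n_i} \sim \size_i$. Distributing the summation exponents according to any triple $(\theta_1,\theta_2,\theta_3)$ with $\sum_i \theta_i = 1$ produces the claimed product $\prod_i \size_i^{\theta_i}\energy_i^{1-\theta_i}$. The main obstacle I anticipate is the simultaneous tree decomposition across all three indices without logarithmic losses: one must verify that performing the greedy selection for one index does not degrade the rank-$1$ and strong-disjointness properties required for the selections in the other indices, which is handled by performing the three decompositions in sequence on successive residues and observing that subcollections of rank-$1$ sparse collections inherit those properties.
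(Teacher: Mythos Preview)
Your outline is correct and is precisely the standard argument from \cite{MTT1} and the Appendix of \cite{MTT2}, which is exactly what the paper cites as its proof (the paper gives no independent argument here). The three ingredients you list---the single-tree estimate, the greedy size-based tree selection with the energy controlling $\sum_T |I_T|$, and the dyadic summation in the size parameters optimized via $(\theta_1,\theta_2,\theta_3)$---are the content of that appendix, and your remark about performing the selections sequentially on residues so that rank-$1$ and sparseness are preserved is the correct resolution of the bookkeeping issue.
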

\begin{proof}
See the Appendix \cite{MTT2}.
\end{proof}

 In order to use Proposition 7.6, we will need some estimates on size and
energy. We have the following lemmas from \cite{MTT2} for $a^{(i)}_{P_i}$, $i=1,2,3,$ defined in (\ref{17}):
\begin{lemma} Suppose that $\vec \P$ is a finite collection
of tri-tiles. Suppose that $E_i$, for $i= 1, 2, 3$, is a subset of $\R$ with finite
measure and $f_i$ is a function in $X(E_i)$. Then, we have
$$\size_i(( \langle f_i , \Phi_{P_i} \rangle)_{ \vec P\in \vec \P}) \lesssim \sup_{ \vec P\in \vec \P}
\int_{E_i} \frac {\tilde\chi_{I_{\vec P}}^M} {|I_{\vec P}|} $$
for all $M$, where the implicit constant depends on $M$.
\end{lemma}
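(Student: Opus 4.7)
The plan is to combine Lemma 7.4 with a pointwise bound on each inner product $\langle f_i,\Phi_{P_i}\rangle$ and an almost-orthogonality argument on trees. Write $A:=\sup_{\vec P\in\vec\P}\int_{E_i}\tilde\chi_{I_{\vec P}}^M/|I_{\vec P}|$ for brevity. From $|f_i|\leq\chi_{E_i}$ and the wave-packet decay \eqref{kk}, the basic pointwise estimate is
\[
|\langle f_i,\Phi_{P_i}\rangle|\;\leq\;\int_{E_i}|\Phi_{P_i}(x)|\,dx\;\lesssim\;|I_{\vec P}|^{1/2}\int_{E_i}\frac{\tilde\chi_{I_{\vec P}}^M}{|I_{\vec P}|}\;\leq\;A\,|I_{\vec P}|^{1/2}.
\]
By Lemma 7.4, the claim reduces to showing, for every $j$-tree $T\subseteq\vec\P$ with some $j\neq i$, that
\[
\frac{1}{|I_T|}\Bigl\|\Bigl(\sum_{\vec P\in T}|\langle f_i,\Phi_{P_i}\rangle|^2\,\frac{\chi_{I_{\vec P}}}{|I_{\vec P}|}\Bigr)^{1/2}\Bigr\|_{L^{1,\infty}(I_T)}\;\lesssim\;A.
\]

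I would then decompose $f_i=f_i\chi_{5I_T}+f_i\chi_{\R\setminus 5I_T}$. For the non-local piece, the spatial decay of $\Phi_{P_i}$ outside $5I_T$ combined with $I_{\vec P}\subseteq I_T$ yields the additional factor $(|I_{\vec P}|/|I_T|)^{M-1}$ in the pointwise bound for each $\langle f_i\chi_{\R\setminus 5I_T},\Phi_{P_i}\rangle$; summing the resulting absolutely convergent series over the dyadic scales of $\vec P\in T$ controls the non-local contribution to the square function pointwise on $I_T$ by $A$. For the local piece, the essential input is almost-orthogonality: in a $j$-tree with $j\neq i$, rank-1 condition (3) gives $P_i\lesssim' P_{T,i}$ for tri-tiles strictly below the top, forcing $3\omega_{T,i}\not\subseteq 3\omega_{P_i}$ together with $10^7\omega_{T,i}\subseteq 10^7\omega_{P_i}$. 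The frequency intervals $\omega_{P_i}$ at distinct scales $|I_{\vec P}|$ therefore form a Whitney-type decomposition of essentially disjoint dyadic intervals adjacent to $\omega_{T,i}$, while sparseness of $\vec\P$ restricts to $O(1)$ distinct frequency cubes per scale. A Bessel-type inequality for the resulting Whitney family of wave packets, combined with $\|f_i\chi_{5I_T}\|_2^2\leq|E_i\cap 5I_T|\lesssim A|I_T|$ (the latter from taking $\vec P=\vec P_T$ in the supremum defining $A$), controls the local contribution.

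The main obstacle is the almost-orthogonality step: rigorously verifying the Whitney picture of the $\omega_{P_i}$ across scales from the rank-1 conditions, and then interleaving the $L^2$ orthogonality bound with the pointwise bound $|\langle f_i,\Phi_{P_i}\rangle|\leq A|I_{\vec P}|^{1/2}$ carefully enough that the final constant depends linearly on $A$ (rather than on $A^{1/2}$). Once the two pieces are in place, the required weak-$L^1$ estimate on $I_T$ follows from $\|g\|_{L^{1,\infty}(I_T)}\lesssim|I_T|^{1/2}\|g\|_{L^2(I_T)}$ applied to the local square function together with the uniform pointwise bound on the non-local part, and the lemma is concluded via Lemma 7.4. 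This argument is closely parallel to the corresponding size estimate proved in \cite{MTT2}.
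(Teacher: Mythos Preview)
Your outline has the right skeleton—Lemma 7.4, the local/non-local split with respect to $5I_T$, and the lacunary frequency structure of $\{\omega_{P_i}\}$ in a $j$-tree with $j\neq i$—and this is indeed the argument in \cite{MTT2} that the paper simply cites. The non-local piece works exactly as you describe.

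The gap is in the local piece. Going through an $L^2$ Bessel inequality and then the embedding $\|g\|_{L^{1,\infty}(I_T)}\lesssim|I_T|^{1/2}\|g\|_{L^2}$ gives only
\[
\frac{1}{|I_T|}\|g\|_{L^{1,\infty}(I_T)}\;\lesssim\;|I_T|^{-1/2}\|f_i\chi_{5I_T}\|_2\;\le\;|I_T|^{-1/2}|E_i\cap 5I_T|^{1/2}\;\lesssim\;A^{1/2},
\]
which is strictly weaker than $A$ since $A\lesssim 1$ always holds here. You flag this exponent problem yourself, but the ``interleaving'' of the pointwise bound $|\langle f_i,\Phi_{P_i}\rangle|\le A|I_{\vec P}|^{1/2}$ with the $L^2$ bound does not recover the missing power: both inputs are already saturated in the computation above, and no convex combination of them yields the linear dependence on $A$.

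The fix, and what \cite{MTT2} actually does, is to bypass $L^2$ entirely for the local piece. The lacunary structure of $\{\omega_{P_i}\}_{\vec P\in T}$ makes the vector-valued map
\[
f\;\longmapsto\;\Bigl(\langle f,\Phi_{P_i}\rangle\,|I_{\vec P}|^{-1/2}\tilde\chi_{I_{\vec P}}^{50}\Bigr)_{\vec P\in T}
\]
a modulated Calder\'on--Zygmund operator (exactly the mechanism used in the proof of Lemma 8.2 of the present paper), hence of weak type $(1,1)$. Applying that bound directly gives
\[
\|g\|_{L^{1,\infty}}\;\lesssim\;\|f_i\chi_{5I_T}\|_1\;\le\;|E_i\cap 5I_T|\;\lesssim\;A\,|I_T|,
\]
which is the required linear dependence on $A$.
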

\begin{proof}
See Lemma 6.8 in \cite{MTT2}.
\end{proof}

\begin{lemma} Suppose that $\vec \P$ is a finite collection
of tri-tiles. Suppose that $E_i$, for $i= 1, 2, 3$, is a subset of $\R$ with finite
measure and $f_i$ is a function in $X(E_i)$. Then, we have
$$\energy_i(( \langle f_i, \Phi_{P_i} \rangle)_{ \vec P\in\vec \P} ) \leq \|f_i\|_2.$$

\end{lemma}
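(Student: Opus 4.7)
The plan is a standard $TT^{*}$ argument, using the duality characterization of energy from Lemma 7.5 together with the almost orthogonality of wave packets associated to strongly $i$-disjoint trees.

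Fix $i \in \{1,2,3\}$ and apply Lemma 7.5 to the sequence $a_{P_i}^{(i)} = \langle f_i, \Phi_{P_i}\rangle$. This produces a collection $\T$ of strongly $i$-disjoint trees and scalars $c_{P_i}$, for $\vec P \in \bigcup_{T\in\T}T$, such that
\begin{equation}\nonumber
\energy_i\bigl((\langle f_i,\Phi_{P_i}\rangle)_{\vec P\in \vec\P}\bigr) \sim \Bigl| \sum_{T\in\T}\sum_{\vec P\in T} \langle f_i,\Phi_{P_i}\rangle\,\bar c_{P_i}\Bigr| = \Bigl|\Bigl\langle f_i,\; \sum_{T\in\T}\sum_{\vec P\in T} c_{P_i}\Phi_{P_i}\Bigr\rangle\Bigr|,
\end{equation}
together with the sub-tree bound $\sum_{\vec P\in T'}|c_{P_i}|^2 \lesssim |I_{T'}|/\sum_{T\in\T}|I_T|$. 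By Cauchy--Schwarz it suffices to show that
\begin{equation}\nonumber
\Bigl\| \sum_{T\in\T}\sum_{\vec P\in T} c_{P_i}\Phi_{P_i}\Bigr\|_2 \lesssim 1.
\end{equation}

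Expanding the square norm, I would split the sum over pairs $(\vec P,\vec P')$ into three contributions. First, the diagonal pairs with $\vec P=\vec P'$ contribute $\sum_{T}\sum_{\vec P\in T}|c_{P_i}|^2\|\Phi_{P_i}\|_2^2$, which is $\lesssim \sum_{T}|I_T|/\sum_T|I_T|=1$ by applying the hypothesis on the $c_{P_i}$ with $T'=T$ and the $L^2$-normalization implicit in (\ref{kk}). Second, for pairs $\vec P\neq \vec P'$ lying in the same tree $T\in\T$, the wave packets $\Phi_{P_i}$ and $\Phi_{P'_i}$ have frequency supports in $\frac{9}{10}\omega_{P_i}$ and $\frac{9}{10}\omega_{P'_i}$; since $\vec\P$ is sparse of rank $1$ and $T$ is a $j$-tree for some $j$, these frequency intervals are either equal (forcing $P_i=P'_i$, excluded here) or disjoint, so these inner products vanish. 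Third, for pairs with $\vec P\in T$, $\vec P'\in T'$ in different trees $T\neq T'$, strong $i$-disjointness applies: either $2\omega_{P_i}\cap 2\omega_{P'_i}=\emptyset$, in which case the frequency supports are separated and the inner product is zero, or the spatial intervals satisfy $I_{\vec P'}\cap I_T=\emptyset$ and $I_{\vec P}\cap I_{T'}=\emptyset$, in which case the rapid spatial decay in (\ref{kk}) forces the inner products to be tiny.

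The main technical obstacle is controlling this last cross-tree contribution. I would handle it by summing the rapidly decaying factors $|\langle \Phi_{P_i},\Phi_{P'_i}\rangle|\lesssim |I_{\vec P}|^{-1/2}|I_{\vec P'}|^{-1/2}\int \tilde\chi_{I_{\vec P}}^{M}\tilde\chi_{I_{\vec P'}}^{M}$ against $|c_{P_i}||c_{P'_i}|$, then applying Cauchy--Schwarz in $(\vec P,\vec P')$ and using the hypothesis on $c_{P_i}$ to each sub-tree together with the geometric constraint from strong disjointness that $I_{\vec P'}$ lies outside $I_T$ (and symmetrically). Telescoping the decay over dyadic spatial scales yields an off-diagonal bound of the same order as the diagonal, namely $\lesssim 1$. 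Combining the three contributions gives $\|\sum c_{P_i}\Phi_{P_i}\|_2\lesssim 1$, and plugging back through Cauchy--Schwarz yields $\energy_i((\langle f_i,\Phi_{P_i}\rangle)_{\vec P\in\vec\P})\lesssim \|f_i\|_2$, as desired.
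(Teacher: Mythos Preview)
Your overall route via Lemma 7.5 and a $TT^{*}$/Bessel estimate is the standard one; the paper does not actually give a proof but simply defers to Lemma 6.7 of \cite{MTT2}, which proceeds along essentially the same lines. The cross-tree portion of your sketch is correct in outline. There is, however, a genuine gap in your treatment of the within-tree contribution.

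You assert that for $\vec P\neq\vec P'$ lying in the same tree $T\in\T$, the frequency intervals $\omega_{P_i},\omega_{P'_i}$ are either equal (``forcing $P_i=P'_i$'') or disjoint, and hence that these inner products vanish. Neither step is justified. First, the dichotomy quoted after Definition 7.2 (that in a $j$-tree the frequency intervals in the indices $\neq j$ are either equal or well separated) requires $T$ to be a $j$-tree with $j\neq i$; nothing in the definition of $\energy_i$ rules out $T$ being an $i$-tree, and in an $i$-tree the intervals $\omega_{P_i}$ at different scales all sit near $\omega_{T,i}$ and genuinely overlap, so $\langle\Phi_{P_i},\Phi_{P'_i}\rangle$ need not vanish. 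Second, even in a $j$-tree with $j\neq i$, the equality $\omega_{P_i}=\omega_{P'_i}$ does \emph{not} force $P_i=P'_i$: two tri-tiles at the same scale can share $\omega_{P_i}$ while having distinct (hence disjoint) dyadic spatial intervals $I_{\vec P}\neq I_{\vec P'}$, and then the inner product is again nonzero (though rapidly decaying by (\ref{kk})).

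These within-tree off-diagonal terms therefore cannot be discarded; they must be estimated by the same mechanism you use across trees, namely the spatial decay from (\ref{kk}) together with the sub-tree bound $\sum_{\vec P\in T'}|c_{P_i}|^2\lesssim |I_{T'}|/\sum_{T}|I_T|$ applied to genuine sub-trees $T'\subsetneq T$ rather than only to $T'=T$. Once you fold the same-tree pairs into the off-diagonal analysis instead of claiming they vanish, the argument can be completed.
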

\begin{proof}
See Lemma 6.7 in \cite{MTT2}.
\end{proof}

\vskip 1 cm

\section{Estimates of Size and Energy of $a^{(3), \#}_{Q_3}$}

 In this section, we will provide the relevant size and energy estimates for the form $\Lambda^{\#}_{\vec \P, \vec \Q}$ in Theorem 4.9. In order to obtain more complicated size and energy estimates than those of $a_{P_i}^{(i)} = \langle f_i, \Phi_{P_i} \rangle$ described in Lemma 7.7 and Lemma 7.8, it shall be more convenient to rewrite $\Lambda^{\#}_{\vec \P, \vec \Q}$ as
$$\Lambda^{\#}_{\vec \P, \vec \Q}(f_1, f_2, f_3, f_4) =  \sum_{\vec Q \in \vec\Q} \frac {1}{|I_{\vec Q}|^{1/2}}
a^{(1)}_{Q_1}a^{(2)}_{Q_2}a^{(3),\#}_{Q_3},$$
 where
 \begin{align}
 a^{(1)}_{Q_1}&:= \langle f_1 , \Phi_{Q_1}\rangle \nonumber\\
a^{(2)}_{Q_2}&:=\langle f_2 , \Phi_{Q_2}\rangle  \nonumber\\
a^{(3),\#}_{Q_3}&:=  \sum_{\vec P \in \vec\P:\omega_{Q_3}\subseteq \omega_{P_1} \atop 2^{\#}|\omega_{Q_3}| \sim |\omega_{P_1}|}
\frac {1} {|I_{\vec P}|^{1/2}}
\langle f_3 , \Phi_{P_2}\rangle 
   \langle f_4 , \Phi_{P_3}\rangle
  \langle  \Phi_{P_1},  \Phi_{Q_3}\rangle\label{tiles}
\end{align}
by reversing the order of summation. As a consequence, the inner summation has a stronger spatial localized than the outer summation because the $\vec P$ tiles have a narrower spatial interval than the $\vec Q$ tiles by the constraint $\omega_{Q_3}\subseteq \omega_{P_1}$.  

Compared to the form $\Lambda_{ \vec\P,\vec\Q}$ in Theorem 4.8 of \cite{MTT2}, we have the form $\Lambda^{\#}_{ \vec\P,\vec\Q}$ with extra constraint $2^{\#}|\omega_{Q_3}| \sim |\omega_{P_1}|$. Because of this constraint, we are no longer able to apply one of the essential techniques to estimate the tile norms of the type of $a^{(3)}_{Q_3}$ (described in Lemma 8.2 of \cite{MTT2} and Lemma 6.1 of \cite{MTT1}). This technique exploits the symmetry of  $\vec P$ and $\vec Q$ to decouple them. Hence we have 
$$a^{(3)}_{Q_3}:=  \sum_{\vec P \in \vec\P:\omega_{Q_3}\subseteq \omega_{P_1}}
\frac {1} {|I_{\vec P}|^{1/2}}
\langle f_3 , \Phi_{P_2}\rangle 
   \langle f_4 , \Phi_{P_3}\rangle
  \langle  \Phi_{P_1},  \Phi_{Q_3}\rangle$$
  $$= \sum_{\vec P \in \vec\P'}
\frac {1} {|I_{\vec P}|^{1/2}}
\langle f_3 , \Phi_{P_2}\rangle 
   \langle f_4 , \Phi_{P_3}\rangle
  \langle  \Phi_{P_1},  \Phi_{Q_3}\rangle,$$
  where $\vec\P'$ is the sub-collection of $\vec\P$, which only depends on a fixed $i$-tree $T \subseteq \vec\Q$, for some $i=1,2$, and not on each $\vec Q\in \vec \Q$.
Thus, here, we will need a new trick to deal with the tile norms of the type of $a^{(3),\#}_{Q_3}$ by takeing advantage of the extra constraint $2^{\#}|\omega_{Q_3}| \sim |\omega_{P_1}|$.

 We first set out some general definitions, which shall be useful in this chapter.
 Define that a collection $\{\omega\}$ of intervals is $\emph{lacunary\; around \;the\; frequency\; \xi}$ if we have $\dist(\xi, \omega) \sim |\omega|$ for all $\omega$ in the collection.
We also define a {\em modulated Calder\'{o}n-Zygmund  operator} to be any operator $T_K$ of the form
    $${T_K}f(x)=\int K(x, y)f(y) dy,$$
  where $x, y \in \R$, and the (possibly vector-valued) kernel $K$ satisfies the estimates
  $$|K(x, y)| \lesssim 1/|x-y|$$
  and 
  $$|\nabla_{x,y} (e^{2\pi i (x\xi + y\eta)} K(x, y))| \lesssim 1/|x-y|^2$$
  for all $x\neq y$ and for some $\xi, \eta \in \R$. Observe that a modulated Calder\'{o}n-Zygmund operator is the composition of an ordinary Calder\'{o}n-Zygmund operator with modulation operators such as $f \mapsto e^{2\pi i \xi} \cdot f$. Thus we see that $T_K$ is bounded on $L^p$, for all $1<p<\infty$, and is also weak-type $(1, 1)$ by standard Calder\'{o}n-Zygmund theory.
  
 In order to estimate the size of $a^{(3),\#}_{Q_3}$, we need the following lemma:
 \begin{lemma}
  Let $\vec \P$ and $\vec \Q$ be finite collections of tri-tiles and $T \subset \vec \Q$ be a $i$-tree for some $i=1,2$. Define the collections of intervals, $\{\omega_{Q_3}\}_T$ and $\{\omega_{P_i}\}_{ \vec \P, T}$, for $i=1,2,3$, by $$ \{\omega_{Q_3}\}_{\vec Q \in T} := \{\omega_{Q_3} : \vec Q \in T\} $$
   $${ \rm{and}}\; \;\;\; \; \;\;\;\;\; \;\; \{\omega_{P_i}\}_{ \vec \P, T}:=\{\omega_{P_i} : \vec P \in \vec \P, \omega_{Q_3} \subset \omega_{P_1}, 2^{\#} |\omega_{Q_3}| \sim |\omega_{P_1}| \;  { \rm{where}}\;  \vec Q \in T\}.$$
Then, the collections $\{\omega_{Q_3}\}_{\vec Q \in T}$, $\{\omega_{P_2}\}_{ \vec \P, T}$, and $\{\omega_{P_3}\}_{ \vec \P, T}$ are lacunary, however the collection $\{\omega_{P_1}\}_{ \vec \P, T}$ is non-lacunary.
  \end{lemma}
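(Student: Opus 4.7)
The plan is to fix an $i$-tree $T\subseteq\vec{\Q}$ ($i\in\{1,2\}$) with top $\vec Q_T$ and take $\xi^{\ast}:=c(\omega_{T,3})$, the center of the third-coordinate interval of the top, as the pivotal frequency. The strategy is to show that $\xi^{\ast}$ witnesses lacunarity of $\{\omega_{Q_3}\}_{\vec Q\in T}$ and simultaneously forces non-lacunarity of $\{\omega_{P_1}\}_{\vec{\P},T}$, while the shifted frequencies $\xi^{\ast}/2$ and $3\xi^{\ast}/2$ play the analogous role for $\{\omega_{P_2}\}_{\vec{\P},T}$ and $\{\omega_{P_3}\}_{\vec{\P},T}$.

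The first two claims follow directly from rank-$1$ condition (3) of $\vec{\Q}$ combined with the constraint $\omega_{Q_3}\subset\omega_{P_1}$, $2^{\#}|\omega_{Q_3}|\sim|\omega_{P_1}|$. Any $\vec Q\in T$ distinct from $\vec Q_T$ has $I_{\vec Q}\subsetneq I_T$ and, by sparseness of $\vec{\Q}$, $|I_{\vec Q}|\leq 10^{-9}|I_T|$, so the size hypothesis of (3) is met with other index $3\neq i$, yielding $Q_3\lesssim' Q_{T,3}$. Unpacking, $10^7\omega_{T,3}\subseteq 10^7\omega_{Q_3}$ gives $\xi^{\ast}\in 10^7\omega_{Q_3}$ and hence $\dist(\xi^{\ast},\omega_{Q_3})\lesssim|\omega_{Q_3}|$; simultaneously $3\omega_{T,3}\not\subseteq 3\omega_{Q_3}$ together with $|\omega_{T,3}|\ll|\omega_{Q_3}|$ forces $\xi^{\ast}$ essentially outside $\omega_{Q_3}$, giving the matching lower bound $\dist(\xi^{\ast},\omega_{Q_3})\gtrsim|\omega_{Q_3}|$. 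Non-lacunarity of $\{\omega_{P_1}\}$ is then immediate: each $\omega_{P_1}$ contains some such $\omega_{Q_3}$ and $\#\geq 1000$, so $\dist(\xi^{\ast},\omega_{P_1})\leq\dist(\xi^{\ast},\omega_{Q_3})\lesssim 2^{-\#}|\omega_{P_1}|\ll|\omega_{P_1}|$.

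For $\{\omega_{P_2}\}$ and $\{\omega_{P_3}\}$ I would return to the construction of the $\vec{\P}$ tri-tiles in Section 4. Each such $\vec P$ corresponds to a shifted dyadic cube $Q'=Q'_1\times Q'_2\times Q'_3$ from the partition of unity for $m_2$, so that $\dist(Q',\Gamma_2)\simeq C_0|Q'|$ and $Q'_3$ is the ``sum'' interval satisfying $\frac{9}{10}Q'_1+\frac{9}{10}Q'_2\subseteq\frac{7}{10}Q'_3$. Tracking these through the Taylor expansion and the redefinition $Q''_1=2^{k_2+1}[m,m+1]$ identifies $\omega_{P_1}$ with the $\xi_1+\xi_2$-variable, $\omega_{P_2}$ with $\xi_3$, and $\omega_{P_3}$ with $\xi_1+\xi_2+\xi_3$, and produces the quantitative relations $|c(\omega_{P_1})-2c(\omega_{P_2})|\sim C_0|\omega_{P_2}|$ (from the $\Gamma_2$-separation) and $c(\omega_{P_3})=c(\omega_{P_1})+c(\omega_{P_2})+O(|\omega_{P_3}|)$ (from the choice of $Q'_3$). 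Since $\xi^{\ast}\in\omega_{P_1}$ we have $|c(\omega_{P_1})-\xi^{\ast}|\leq|\omega_{P_1}|/2$, and because $C_0$ is a large constant dominating this slop, one obtains $|c(\omega_{P_2})-\xi^{\ast}/2|\sim|\omega_{P_2}|$ and $|c(\omega_{P_3})-3\xi^{\ast}/2|\sim|\omega_{P_3}|$, which is precisely the lacunary condition. The hard part will be this final step: keeping straight which $\xi$-variable each $\omega_{P_j}$ lives in after the various redefinitions of Section 4, and verifying that the $|\omega_{P_2}|$-sized error coming from ``$\xi^{\ast}$ lies somewhere inside $\omega_{P_1}$'' is genuinely dominated by the $C_0|\omega_{P_2}|$-sized $\Gamma_2$-gap, so that lacunarity holds uniformly across the collection with constants independent of the particular $\vec P$.
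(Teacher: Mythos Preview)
Your argument is essentially correct, but it takes a longer and more construction-dependent route than the paper for the $\{\omega_{P_2}\}_{\vec\P,T}$ and $\{\omega_{P_3}\}_{\vec\P,T}$ claims. The paper fixes a single pivot $\xi_o\in\omega_{T,1}$ (rather than your $\xi^\ast=c(\omega_{T,3})$) and uses it uniformly for all four collections. Once one knows $\xi_o\in C_o\omega_{P_1}$ for every relevant $\vec P$ (the non-lacunarity statement), lacunarity of $\{\omega_{P_2}\}$ and $\{\omega_{P_3}\}$ around the \emph{same} $\xi_o$ is immediate from the generic tri-tile fact that, for each fixed $\vec P$, the intervals $\omega_{P_2}$ and $\omega_{P_3}$ sit a bounded number of steps away from $\omega_{P_1}$ (same scale, disjoint, separation $\sim|\omega_{P_1}|$). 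No return to the Section~4 construction, no bookkeeping of the redefinitions $Q'_1\to Q''_1$ and $Q'_3\to -Q'_3$, and no auxiliary pivots $\xi^\ast/2$, $3\xi^\ast/2$ are needed. Your route has the merit of making the $\Gamma_2$-gap quantitative, but the paper's one-line deduction is what the rank-$1$ setup is designed to deliver. One small slip to note: after the reflection $Q'_3\mapsto -Q'_3$ in Section~4 one actually has $c(\omega_{P_3})\approx -\bigl(c(\omega_{P_1})+c(\omega_{P_2})\bigr)$, so the pivot for $\{\omega_{P_3}\}$ in your scheme should be $-3\xi^\ast/2$ rather than $3\xi^\ast/2$; this is harmless for the lacunarity conclusion.
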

  
  \begin{proof}
  Without loss of generality, we shall assume that $T$ is 1-tree in this proof. 
  
  First, to show that the collection $\{\omega_{P_1}\}_{ \vec \P, T}$
  is non-lacunary, pick any $\xi_o \in \omega_{T,1}$. It is easy to see that $\xi_o \in \omega_{T,1} \subset 3\omega_{Q_1}$ for all $\vec Q \in T$, henceforth there exists a fixed number $C_o$ such that
  $\xi_o \in C_o \omega_{Q_3}$
  for all $\vec Q \in T$. Therefore, we conclude that $ \xi_o \in C_0 \omega_{P_1}  $  for all $\omega_{P_1} \in  \{\omega_{P_1}\}_{ \vec \P, T}$, and hence the collection $\{\omega_{P_1}\}_{ \vec \P, T}$
  is non-lacunary.
  
  Then, it follows that the collection $\{\omega_{P_i}\}_{ \vec \P, T}$, for each $i=2,3$, is lacunary with the same $\xi_0$, because we know that, for each $\vec P \in \vec \P$, $\omega_{P_2}$ and $ \omega_{P_3}$ are a few steps away from $ \omega_{P_1}.$ 
  
  Now, we prove that $\{ \omega_{Q_3}\}_{\vec Q \in T}$ is lacunary:
For any $\vec Q \in T$, we have $Q_1 \leq Q_{T, 1}$. Then, because we have rank 1, we obtain $Q_2 \lesssim Q_{T, 2}$ and $Q_3 \lesssim Q_{T, 3}$. In particular, if we furthermore assume $Q_1 < Q_{T,1}$, then we have $|10^9 \omega_{T,1}|<|\omega_{Q_1}| $
by sparseness.
Thus, we obtain
$Q_3 \lesssim Q_{T,3} \;  {\rm{and}} \; Q_3 \nleq Q_{T,3}.$
Thence, we have
$$10^7 \omega_{T,3} \subset 10^7\omega_{Q_3} \; {\rm but} \; 3\omega_{T,3} \nsubseteq 3\omega_{Q_3}.$$
Again, pick any $\xi_o \in \omega_{T,1}$. Then, we conclude that $\dist(\xi_o , \omega_{Q_3}) \approx C_o |\omega_{Q_3}|.$
  \end{proof}
  Now we can obtain the size estimates for $a^{(3),\#}_{Q_3}$.
 \begin{lemma} 
For $ j = 3,4$, let $E_j$ be sets of finite measure in $\R$ and $f_j$ be functions in $X(E_j)$.
Then, we have
\begin{equation}\label{19}
\size_3((a^{(3),\#}_{Q_3})_{ \vec Q\in \vec \Q}) \lesssim \sup_{ \vec Q\in \vec \Q}
\left( \int_{E_3} \frac {\tilde\chi_{I_{\vec Q}}^M} {|I_{\vec Q}|} \right)^{1-\theta}
\left( \int_{E_4} \frac {\tilde\chi_{I_{\vec Q}}^M} {|I_{\vec Q}|} \right)^{\theta}
\end{equation}
for any $0 < \theta < 1$ and $M > 0$, with the implicit constant depending on $\theta, M$.
\end{lemma}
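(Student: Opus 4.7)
Fix a $j$-tree $T\subseteq \vec\Q$ with $j\in\{1,2\}$, and let $\xi_0\in\omega_{T,j}$ be the central frequency identified in the proof of Lemma 8.1. Rewrite each summand as an inner product
$$a^{(3),\#}_{Q_3}=\langle F_{Q_3},\Phi_{Q_3}\rangle,\qquad F_{Q_3}(x):=\sum_{\vec P\in\vec\P_{Q_3}}\frac{\langle f_3,\Phi_{P_2}\rangle\langle f_4,\Phi_{P_3}\rangle}{|I_{\vec P}|^{1/2}}\,\Phi_{P_1}(x),$$
where $\vec\P_{Q_3}:=\{\vec P\in\vec\P:\omega_{Q_3}\subset\omega_{P_1},\,2^{\#}|\omega_{Q_3}|\sim|\omega_{P_1}|\}$. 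The uncertainty relation $|I_{\vec P}||\omega_{P_1}|=1=|I_{\vec Q}||\omega_{Q_3}|$ together with the scale constraint forces $|I_{\vec P}|\sim 2^{-\#}|I_{\vec Q}|$, so the tiles $\vec P$ feeding $F_{Q_3}$ live at a spatial scale $2^{\#}$ finer than $\vec Q$.

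Next I would exploit Lemma 8.1: across $\vec Q\in T$, the frequency intervals $\omega_{P_2}$, $\omega_{P_3}$ are lacunary around $\xi_0$ while $\omega_{P_1}$ is non-lacunary around $\xi_0$. After a modulation by $e^{-2\pi i\xi_0 x}$, the bilinear map $(f_3,f_4)\mapsto F_{Q_3}$ thus falls in the framework of the (vector-valued) bilinear modulated Calder\'on--Zygmund paraproducts introduced at the beginning of Section 8. The resulting kernel estimates, combined with $|\Phi_{P_i}|\lesssim|I_{\vec P}|^{-1/2}\widetilde\chi_{I_{\vec P}}^{M}$ and with a H\"older split at exponents $1-\theta$ and $\theta$, yield a pointwise majorization
$$|F_{Q_3}(x)|\lesssim \bigl(M f_3(x)\bigr)^{1-\theta}\bigl(M f_4(x)\bigr)^{\theta}$$
by Hardy--Littlewood maximal functions, with implied constants independent of $\#$, $\vec Q$, and the wave-packet parameters. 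Pairing with $|\Phi_{Q_3}|\lesssim|I_{\vec Q}|^{-1/2}\widetilde\chi_{I_{\vec Q}}^{M}$ and using $f_j\in X(E_j)$ together with the standard estimate $\int (M f_j)\,\widetilde\chi_{I_{\vec Q}}^{M}/|I_{\vec Q}|\lesssim \int_{E_j}\widetilde\chi_{I_{\vec Q}}^{M}/|I_{\vec Q}|$ for $j=3,4$, one obtains
$$|a^{(3),\#}_{Q_3}|\lesssim \Bigl(\int_{E_3}\tfrac{\widetilde\chi_{I_{\vec Q}}^{M}}{|I_{\vec Q}|}\Bigr)^{1-\theta}\Bigl(\int_{E_4}\tfrac{\widetilde\chi_{I_{\vec Q}}^{M}}{|I_{\vec Q}|}\Bigr)^{\theta}.$$
Squaring, summing over $\vec Q\in T$, invoking the bounded Carleson overlap $\sum_{\vec Q\in T}|I_{\vec Q}|\lesssim |I_T|$ that is standard for trees in sparse collections, normalizing by $|I_T|$, and taking the supremum over $T$ then delivers the claimed size estimate; Lemma 7.4 provides the parallel John--Nirenberg formulation if needed in the sequel.

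\textbf{Main obstacle.} The crux, as the paper emphasizes immediately after Lemma 8.1, is that the scale constraint $2^{\#}|\omega_{Q_3}|\sim|\omega_{P_1}|$ couples $\vec P$ to $\vec Q$ and blocks the decoupling trick used in Lemma 8.2 of \cite{MTT2}. My proposed way around this is to keep $F_{Q_3}$ genuinely $Q_3$-dependent but to absorb the entire $Q_3$-dependence into a single scale parameter of an underlying modulated Calder\'on--Zygmund operator, using Lemma 8.1 to guarantee that the output frequencies cluster in a non-lacunary family around the common $\xi_0$. The technically delicate step will be verifying that the kernel estimates for this modulated paraproduct hold uniformly in $\#$, which is exactly what permits the size bound to carry no $\#$-dependent constant, in contrast to the $2^{\#/2}$ factor tolerated in \eqref{first} for the whole form $\Lambda^{\#}_{\vec\P,\vec\Q}$.
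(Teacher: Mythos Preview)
Your argument has two concrete false steps.

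First, the ``standard estimate'' you invoke,
\[
\int (M f_j)\,\frac{\widetilde\chi_{I_{\vec Q}}^{M}}{|I_{\vec Q}|}\;\lesssim\; \int_{E_j}\frac{\widetilde\chi_{I_{\vec Q}}^{M}}{|I_{\vec Q}|},
\]
is exactly the assertion that the Hardy--Littlewood maximal operator is bounded on $L^1$ with respect to the weight $\widetilde\chi_{I_{\vec Q}}^{M}/|I_{\vec Q}|$, which is false. Take $E_j$ to be a tiny interval at the center of $I_{\vec Q}$: the right-hand side is $\sim |E_j|/|I_{\vec Q}|$, while the left-hand side is $\sim (|E_j|/|I_{\vec Q}|)\log(|I_{\vec Q}|/|E_j|)$, so no uniform bound holds. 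This already kills the pointwise-to-size passage you propose.

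Second, even if you had a uniform bound $|a^{(3),\#}_{Q_3}|\lesssim |I_{\vec Q}|^{1/2}\cdot A$, your square-summing step needs
\[
\sum_{\vec Q\in T}|I_{\vec Q}|\;\lesssim\;|I_T|,
\]
which you call ``standard for trees in sparse collections.'' It is not: sparseness in this paper is a condition on the \emph{frequency} cubes, not on the spatial intervals, and a $j$-tree can contain arbitrarily many nested scales so that $\sum_{\vec Q\in T}|I_{\vec Q}|$ is unbounded in terms of $|I_T|$. This is precisely why the John--Nirenberg formulation (Lemma~7.4) and an $L^{1,\infty}$ square-function bound are needed rather than a termwise estimate.

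The paper's proof avoids both pitfalls by a decoupling trick you did not find. One writes $\langle\Phi_{P_1},\Phi_{Q_3}\rangle=2^{-\#/2}\langle\widetilde\Phi_{\widetilde P_1},\Phi_{Q_3}\rangle$, where $\widetilde\Phi_{\widetilde P_1}$ is adapted to the enlarged interval $I_{\vec P,2^\#}$ of length $2^\#|I_{\vec P}|\sim|I_{\vec Q}|$ and has Fourier support in $\omega_{Q_3}$. The point (Lemma~8.3) is that the resulting collection $\vec\P'(T)$ depends only on the tree $T$, not on the individual $\vec Q\in T$; hence $a^{(3),\#}_{Q_3}=2^{-\#/2}\langle B_{\vec\P'}(f_3,f_4),\Phi_{Q_3}\rangle$ with $B_{\vec\P'}$ \emph{independent} of $\vec Q$. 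Now the lacunarity of $\{\omega_{Q_3}\}_{\vec Q\in T}$ makes $F\mapsto(\langle F,\Phi_{Q_3}\rangle)_{\vec Q\in T}$ a modulated Calder\'on--Zygmund operator, so the $L^{1,\infty}$ square-function bound on $I_T$ reduces to $\|B_{\vec\P'}(f_3,f_4)\|_1$, which is handled by the lacunary square functions $S_2,S_3$ and H\"older at exponents $1/(1-\theta)$ and $1/\theta$. Your proposal keeps $F_{Q_3}$ genuinely $Q_3$-dependent and tries to compensate downstream, which is where both errors enter.
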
 
\begin{proof}
We first recall that $a^{(3),\#}_{Q_3}$ has been defined by
$$a^{(3),\#}_{Q_3}=  \sum_{\vec P \in \vec\P:\omega_{Q_3}\subseteq \omega_{P_1} \atop 2^{\#}|\omega_{Q_3}| \sim |\omega_{P_1}|}
\frac {1} {|I_{\vec P}|^{1/2}}
\langle f_3 , \Phi_{P_2}\rangle 
   \langle f_4 , \Phi_{P_3}\rangle
  \langle  \Phi_{P_1},  \Phi_{Q_3}\rangle.$$
By Lemma 7.4, it suffices to show that, for any $1$ or $2$-tree $T\subset \vec Q$, we have
$$ \frac 1 {|I_T|
}
 \left\| 
\left(\sum_{\vec Q\in T}
|a_{Q_3}^{(3),\#}|^2 \frac {\chi_{I_{\vec Q}}} {|I_{\vec Q}|}
\right)^{1/2}
\right\|_{L^{1, \infty}(I_T)} 
\lesssim 
\sup_{ \vec Q\in \vec \Q}
\frac 1 {|I_{\vec Q}|
}
\left( \int_{E_3}{\tilde\chi_{I_{\vec Q}}^M} \right)^{1-\theta}
\left( \int_{E_4} {\tilde\chi_{I_{\vec Q}}^M}  \right)^{\theta}
.$$
 We may assume that $T$
contains its top $P_T$, since in the other case we could decompose $T$ into disjoint trees containing its top.
Henceforth, it suffices to estimate
\begin{equation}\label{20}
 \left\| 
\left(\sum_{\vec Q\in T}
|a_{Q_3}^{(3),\#}|^2 \frac {\chi_{I_{\vec Q}}} {|I_{\vec Q}|}
\right)^{1/2}
\right\|_{L^{1, \infty}(I_T)} 
\lesssim 
  \left( \int_{E_3}\tilde\chi_{I_T}^M\right)^{1-\theta}
  \left( \int_{E_4}\tilde\chi_{I_T}^M\right)^{\theta}
\end{equation}
for any $1$ or $2$-tree $T\subset \vec Q$.
Now we fix $T$. To prove (\ref{20}), we first consider the case when both $f_3$ and $f_4$ are supported on $5I_T.$
We may then assume that $E_3,E_4 \subset 5I_T$. By Plancherel's Theorem we see that
 \begin{equation}\label{22}
 \langle  \Phi_{P_1},  \Phi_{Q_3}\rangle
 =\langle \widehat{\Phi}_{P_1}, \widehat{\Phi}_{Q_3}\rangle.
 \end{equation}
 Now we pick a Schwartz function $\psi_{\omega_{Q_3}} $ so that
  $\supp\; \widehat{\psi}_{\omega_{Q_3}} \subseteq  \omega_{Q_3}$ and
  $\widehat{\psi}_{\omega_{Q_3}} \equiv 1$ on $\frac {9}{10} \omega_{Q_3}.$
   Then, (\ref{22}) can be rewritten as
 \beq
  \langle \widehat{\Phi}_{P_1}, \widehat{\Phi}_{Q_3} \cdot  \widehat{\psi}_{\omega_{Q_3}} \rangle
   &=& \langle \widehat{\Phi_{P_1} \ast {\psi}_{\omega_{Q_3}}} , \widehat{\Phi}_{Q_3} \rangle\\
  &=&2^{-\frac {\#} {2}} \langle 2^{{\frac {\#} {2}}} \widehat{\Phi_{P_1} \ast {\psi}_{\omega_{Q_3}} }, \widehat{\Phi}_{Q_3} \rangle\\
  &:=& 2^{-\frac {\#} {2}} \langle \Tilde{\Phi}_{\tilde{P_1}} , {\Phi}_{Q_3} \rangle,
  \eeq
  where 
  $ \Tilde{\Phi}_{\tilde{P_1}} :=
  2^{{\frac {\#} {2}}} {\Phi_{P_1} \ast {\psi}}_{\omega_{Q_3}}.$ We note that $\Tilde{\Phi}_{\tilde{P_1}}$ is an $L^2$-normalized bump adapted to $I_{\vec P,{2^{\#}}}$, where $I_{\vec P,{2^{\#}}}$ denotes the unique dyadic interval of length
  $2^{\#}|I_{\vec P}|$ containing $I_{\vec P}$. 
  Thus, we can see that
  $$a^{(3),\#}_{Q_3}=
  2^{-\frac {\#} {2}}
   \sum_{\vec P \in \vec\P:\omega_{Q_3}\subseteq \omega_{P_1} \atop 2^{\#}|\omega_{Q_3}| \sim |\omega_{P_1}|}
\frac {1} {|I_{\vec P}|^{1/2}}
\langle f_3 , \Phi_{P_2}\rangle 
   \langle f_4 , \Phi_{P_3}\rangle
 \langle \Tilde{\Phi}_{\tilde{P_1}} , {\Phi}_{Q_3} \rangle$$
 Now we claim that
  $$a^{(3),\#}_{Q_3}=
  2^{-\frac {\#} {2}}
   \sum_{\vec P' \in \vec\P'}
\frac {1} {|I_{\vec P}|^{1/2}}
\langle f_3 , \Phi_{P_2}\rangle 
   \langle f_4 , \Phi_{P_3}\rangle
 \langle \Tilde{\Phi}_{\tilde{P_1}} , {\Phi}_{Q_3} \rangle,$$
where the collection $\vec\P'$ of tri-tiles is defined by
 $$\vec \P' (T) = \{\vec P'=\tilde{P_1} \times P_2 \times {P_3} : \vec P =P_1 \times P_2 \times P_3  \in \vec \P \;{\rm and} \;  \tilde{P_1}=I_{\vec P,{2^{\#}}} \times  \omega_{Q_3} $$
 $$\;{\rm for}\; {\rm some}\; \vec Q \in T\; {\rm with} \;
 \omega_{Q_3}\subseteq \omega_{P_1} \;{\rm and }\; 2^{\#}|\omega_{Q_3}| \sim |\omega_{P_1}|\}.$$

 We claim that the sub-collection $\vec \P' \subset \vec \P$ depends on $\vec \P$ and $T \subset \vec \Q$, and not on each $\vec Q \in T$. We will prove this claim in Lemma 8.3.
Using the claim, we can see that 
  $$a^{(3),\#}_{Q_3}=2^{-\frac {\#} {2}}
  \left\langle B_{\vec \P'}(f_3, f_4), \Phi_{Q_3} \right\rangle,
$$
where $$ B_{\vec \P'}:= \sum_{\vec P' \in \vec\P'}
\frac {1} {|I_{\vec P}|^{1/2}}
\langle f_3 , \Phi_{P_2}\rangle 
   \langle f_4 , \Phi_{P_3}\rangle
 \Tilde{\Phi}_{\tilde{P_1}}.$$
Thus, in order to prove (\ref{20}), it suffices to show that
$$\left\|
\left(        
 \sum_{\vec Q\in T}
 |2^{-\frac {\#} {2}}
  \left\langle B_{\vec \P'}(f_3, f_4), \Phi_{Q_3} \right\rangle |^2
   \frac {\tilde \chi^{100}_{I_{\vec Q}}} {|I_{\vec Q}|}
\right)^{1/2}
\right\|_{L^{1, \infty}} 
\lesssim 
|E_3|^{1-\theta}
|E_4|^\theta.$$
The vector-valued operator
$$F \longmapsto
\left(\langle F, \Phi_{Q_3} \rangle 
 \frac {\tilde \chi^{50}_{I_{\vec Q}}} {{|I_{\vec Q}|}^{1/2}}
\right)_{\vec Q \in T}
$$
is a modulated Calder\'{o}n-Zygmund operator since we know that the collection $\{\omega_{Q_3}\}_{\vec Q \in T}$ of intervals is lacunary, so it suffices to show that
$$2^{-\frac {\#} {2}}
\|B_{\vec \P'}(f_3, f_4)
\|_1
\lesssim
\|f_3\|_{1/(1-\theta)}
\|f_4\|_{1/{\theta}}.
$$
We observe that 
\beq
2^{-\frac {\#} {2}}
\|B_{\vec \P'}(f_3, f_4)
\|_1 
&=& 
2^{-\frac {\#} {2}}
\sum_{\vec P' \in \vec\P'}
\frac {1} {|I_{\vec P}|^{1/2}}
|\langle f_3 , \Phi_{P_2}\rangle |
 |  \langle f_4 , \Phi_{P_3}\rangle|
\int_{\scriptsize{\R}} | \Tilde{\Phi}_{\tilde{P_1}}| dx \\
&=&
\sum_{\vec P' \in \vec\P'}
|\langle f_3 , \Phi_{P_2}\rangle |
 |  \langle f_4 , \Phi_{P_3}\rangle|\label{ppp}
  \eeq
 as $\frac  {\Tilde{\Phi}_{\tilde{P_1}}(x)} {2^{\frac {\#} {2}}  {|I_{\vec P}|^{1/2}}}$ is $L^1$-normalized. Then, it is equal to 
 \begin{align}
&\sum_{\vec P' \in \vec\P'}
\frac{|\langle f_3 , \Phi_{P_2}\rangle |}{|I_{\vec P}|^{1/2}}
\cdot
 \frac {|  \langle f_4 , \Phi_{P_3}\rangle|}{|I_{\vec P}|^{1/2}}
\cdot
|I_{\vec P}|\nonumber\\
&\lesssim
\int_{\scriptsize{\R}} \sum_{\vec P' \in \vec\P'}
\frac{|\langle f_3 , \Phi_{P_2}\rangle |}{|I_{\vec P}|^{1/2}}
\cdot
 \frac {|  \langle f_4 , \Phi_{P_3}\rangle|}{|I_{\vec P}|^{1/2}}
\cdot
 {\tilde \chi^{100M}_{I_{\vec P}}}(x) dx\nonumber\\
 &\lesssim
 \int_{\scriptsize{\R}} \left(  \sum_{\vec P' \in \vec\P'}
 \frac{|\langle f_3 , \Phi_{P_2}\rangle |^2}{|I_{\vec P}|} \cdot 
 {\tilde \chi^{100M}_{I_{\vec P}}}
  \right)^{1/2}
 \left(  \sum_{\vec P' \in \vec\P'}
 \frac{|\langle f_4 , \Phi_{P_3}\rangle |^2}{|I_{\vec P}|} \cdot 
 {\tilde \chi^{100M}_{I_{\vec P}}}
  \right)^{1/2}
  dx \nonumber\\
  &\lesssim
 \int_{\scriptsize{\R}} |S_2(f_3)(x)| \;|S_3(f_4)(x)| dx, \label{23}
\end{align}
where  $S_jf:=\left( \langle f, \Phi_{P_j} \rangle
\frac {\tilde \chi^{50M}_{I_{\vec P}}}{|I_{\vec P}|^{1/2}}
\right)_{\vec P' \in \vec \P'\; }$.
Then, since we know that the collections of intervals, $\{ \omega_{P_2}\}_{ \vec \P, T}$ and $\{ \omega_{P_3}\}_{ \vec \P, T}$, are lacunary, we can majorize (\ref{23}) by
$$ \|S_2(f_3)(x)\|_{1/(1-\theta)}  \|S_3(f_4)(x)\|_{1/\theta}\;  \lesssim \;
\|f_3\|_{1/(1-\theta)}
\|f_4\|_{1/{\theta}}$$
for any $0\leq \theta < 1.$

Now we consider the relatively easy case $f_3 \equiv 0$ on $5I_T$. A proof of this case is essentially similar to the proof of Lemma 9.1 in \cite{MTT2} modulo the extra constraint $2^{\#}|\omega_{Q_3}| \sim |\omega_{P_1}|$ from the definition of
$a_{Q_3}^{(3),\#}$. This extra constraint does not affect the proof in this case because we know that the collections of intervals, $\{ \omega_{P_2}\}_{ \vec \P, T}$ and $\{ \omega_{P_3}\}_{ \vec \P, T}$, are lacunary. 
For fixed $\vec Q \in T$, because of the decay of $ \Phi_{P_1}$ and  $\Phi_{Q_3}$, we have
$$|a_{Q_3}^{(3),\#}| \lesssim 
|I_{\vec P}|^{-1/2} 
\sum_{\vec P \in \vec\P:\omega_{Q_3}\subseteq \omega_{P_1} \atop 2^{\#}|\omega_{Q_3}| \sim |\omega_{P_1}|}
|\langle f_3 , \Phi_{P_2}\rangle |
|   \langle f_4 , \Phi_{P_3}\rangle|
 \int \frac {\tilde\chi_{I_{\vec P}}^{100M}} {|I_{\vec P}|^{1/2}}  \frac{ \tilde\chi_{I_{\vec Q}}^{100M} }{|I_{\vec Q}|^{1/2} }dx$$
 $$\;\;\;\;\;\;=
|I_{\vec Q}|^{-1/2} 
\int
\sum_{\vec P \in \vec\P:\omega_{Q_3}\subseteq \omega_{P_1} \atop 2^{\#}|\omega_{Q_3}| \sim |\omega_{P_1}|}
\left(
|\langle f_3 , \Phi_{P_2}\rangle |
|   \langle f_4 , \Phi_{P_3}\rangle|
 \frac {\tilde\chi_{I_{\vec P}}^{100M}} {|I_{\vec P}|}  
\right)
 \tilde\chi_{I_{\vec Q}}^{100M} dx.$$
 Furthermore, by applying Cauchy-Schwarz inequality, we thus have
 $$|a_{Q_3}^{(3),\#}| \lesssim 
|I_{\vec Q}|^{-1/2} 
 \int |S_2f_3||S_3f_4|\tilde\chi_{I_{\vec Q}}^{100M}dx,$$
 where the square function $S_j$, for $j = 2, 3$, is the vector-valued quantity
 $$S_jf := \left(\langle f, \Phi_{P_j}\rangle 
 \frac { \tilde\chi_{I_{\vec P}}^{50M}} {|I_{\vec P}|^{1/2}}  \right)_{\vec P \in \vec\P:\omega_{Q_3}\subseteq \omega_{P_1},\;  2^{\#}|\omega_{Q_3}| \sim |\omega_{P_1}|}.$$ 
 We claim the weighted square-function estimate
 $$\| S_jf  \|_{L^P( \tilde\chi_{I_{\vec Q}}^{100M} dx)}
 \lesssim \|f\|_{L^P( \tilde\chi_{I_{\vec Q}}^{M} dx)},$$
 for all $1 < p < \infty$ and $j = 2,3$, by H\"older, which follows because the collection $\{\omega_{P_j}\}_{ \vec \P, T}$ of intervals, for each $j=2,3$, is lacunary around some frequency $\xi_o$. So $S_j$ is a modulated Calder\'{o}n-Zygmund operator whose kernel is
 $$K_j(x, y) = \bar\Phi_{P_j}(y) \frac { \tilde\chi_{I_{\vec P}}^{50M}(x)} {|I_{\vec P}|^{1/2}},$$ 
which decays like $O(|I_{\vec Q}|^{-1}(|x - y|/|I_{\vec Q}|)^{-50M})$ for all
$|x -y| \gg |I_{\vec Q}|$.

Therefore, we see that 
$$|a_{Q_3}^{(3),\#}| \lesssim
|I_{\vec Q}|^{-1/2}
  \|f_3\|_{L^{\frac 1 {1-\theta}}( \tilde\chi_{I_{\vec Q}}^{M} dx)}
    \|f_4\|_{L^{\frac 1 {\theta}}( \tilde\chi_{I_{\vec Q}}^{M} dx)}$$
    $$\;\;\;
    \lesssim
|I_{\vec Q}|^{-1/2} 
 ( \int_{E_3}\tilde\chi_{I_{\vec Q}}^M)^{1-\theta}
  ( \int_{E_4}\tilde\chi_{I_{\vec Q}}^M)^{\theta}$$
$$ \;\;\;\;\;\;\;\;\;\;\;\;\;\;\; \;\;\;\;\;\;\;\;
\lesssim
  |I_{\vec Q}|^{-1/2} \left(  \frac { |I_{\vec Q}|} { |I_{T}|}
   \right)^{M(1-\theta)}
  ( \int_{E_3}\tilde\chi_{I_{T}}^M)^{1-\theta}
  ( \int_{E_4}\tilde\chi_{I_T}^M)^{\theta}$$
for all $\vec Q \in T$; the claim (\ref{20}) then follows by square-summing in $\vec Q$.
This proves the estimate (\ref{20}) when $f_3 \equiv 0$ on $5I_T$. A similar argument gives the estimate (\ref{20}) when $f_4 \equiv 0$ on $5I_T.$ 
\end{proof}

We will now prove the claim in the proof of Lemma 8.2.
\begin{lemma}
Let $T \subset \vec \Q$ be sparse i-tree for some $i=1,2$. And let
$$\vec \P' (T) = \{\vec P'=\tilde{P_1} \times P_2 \times {P_3} : \vec P =P_1 \times P_2 \times P_3  \in \vec \P \;{\rm and} \;  \tilde{P_1}=I_{\vec P,{2^{\#}}} \times  \omega_{Q_3} $$
 $$\;{\rm for}\; {\rm some}\; \vec Q \in T\; {\rm with} \;
 \omega_{Q_3}\subseteq \omega_{P_1} \;{\rm and }\; \;2^{\#}|\omega_{Q_3}| \sim |\omega_{P_1}|\},$$ 
 where $I_{\vec P,{2^{\#}}}$ is the unique dyadic interval of length
  $2^{\#}|I_{\vec P}|$ containing $I_{\vec P}$. 
 Then by assuming $\vec Q \in T$ and $\vec P \in \vec \P$, 
 we have the following statement:
 $$\omega_{Q_3}\subseteq \omega_{P_1}\; {\rm and}\;  2^{\#}|\omega_{Q_3}| \sim |\omega_{P_1}|
 \;\;\;{\rm if \;and\; only\; if }\;\;\;
 \vec P' \in \vec \P' \;{\rm and }\; \langle \tilde \Phi_{\tilde P_1},  \Phi_{Q_3}\rangle \neq 0.$$
 \end{lemma}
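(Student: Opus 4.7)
The plan is to verify both implications. The forward direction is essentially a definitional unpacking once the relation between $\tilde\Phi_{\tilde P_1}$ and $\Phi_{P_1}$ is collapsed via Plancherel; the reverse direction is the substantive step and will rest on a dichotomy for frequency intervals in a tree.

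For the forward direction ($\Rightarrow$), I would first observe that if $\omega_{Q_3}\subseteq \omega_{P_1}$ and $2^{\#}|\omega_{Q_3}| \sim |\omega_{P_1}|$ for the chosen $\vec Q\in T$, then this $\vec Q$ itself serves as the required witness in the definition of $\vec \P'$, so $\vec P'\in \vec \P'$ by setting $\tilde P_1 = I_{\vec P, 2^{\#}}\times \omega_{Q_3}$. For the non-vanishing of the inner product I would apply Plancherel, noting that $\hat\psi_{\omega_{Q_3}}\equiv 1$ on $\frac{9}{10}\omega_{Q_3}$, which contains $\supp\hat\Phi_{Q_3}$, so that $\hat\psi_{\omega_{Q_3}}\hat\Phi_{Q_3} = \hat\Phi_{Q_3}$ and hence $\langle\tilde\Phi_{\tilde P_1},\Phi_{Q_3}\rangle = 2^{\#/2}\langle\Phi_{P_1},\Phi_{Q_3}\rangle$. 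Non-vanishing then reduces to $\frac{9}{10}\omega_{P_1}\cap \frac{9}{10}\omega_{Q_3}\neq\emptyset$, which follows from the lacunarity established in Lemma 8.1: $\omega_{Q_3}$ sits at distance $\sim |\omega_{Q_3}|\sim 2^{-\#}|\omega_{P_1}|$ from a fixed frequency $\xi_o\in \omega_{P_1}$, hence lies well inside $\frac{9}{10}\omega_{P_1}$.

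For the reverse direction ($\Leftarrow$), I would unpack the membership $\vec P'\in \vec \P'$ to produce a witness $\vec Q_0\in T$ satisfying $\omega_{Q_0,3}\subseteq \omega_{P_1}$, $2^{\#}|\omega_{Q_0,3}|\sim |\omega_{P_1}|$, and $\tilde P_1 = I_{\vec P, 2^{\#}}\times \omega_{Q_0,3}$. The Fourier support of $\tilde\Phi_{\tilde P_1}$ is then contained in $\omega_{Q_0,3}$, so the hypothesis $\langle\tilde\Phi_{\tilde P_1},\Phi_{Q_3}\rangle\neq 0$ forces $\omega_{Q_0,3}\cap \frac{9}{10}\omega_{Q_3}\neq\emptyset$. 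The central step is now to invoke the tree-frequency dichotomy noted right after Definition 7.2: because $T$ is a sparse $i$-tree with $i\in\{1,2\}$ and $j=3\neq i$, any two elements $\vec Q,\vec Q_0\in T$ must satisfy either $\omega_{Q_3}=\omega_{Q_0,3}$ or $2\omega_{Q_3}\cap 2\omega_{Q_0,3}=\emptyset$. The latter is incompatible with the overlap just derived, so $\omega_{Q_3}=\omega_{Q_0,3}$, and the witness conditions transfer verbatim to our $\vec Q$.

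The hardest part will be this reverse direction, specifically the correct identification and application of the tree-frequency dichotomy for the third coordinate; without it, different $\vec Q\in T$ could in principle contribute incompatible $\omega_{Q_3}$'s to the same $\tilde P_1$, and the witness condition could not be promoted from the abstract $\vec Q_0$ to the concrete $\vec Q$ appearing in the inner product. Everything else amounts to routine Plancherel manipulation together with bookkeeping against the definition of $\vec \P'$.
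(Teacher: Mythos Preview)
Your reverse direction ($\Leftarrow$) is exactly what the paper does: extract a witness $\vec Q^*\in T$ from the definition of $\vec\P'$, observe that $\langle\tilde\Phi_{\tilde P_1},\Phi_{Q_3}\rangle\neq 0$ forces $\omega_{Q^*_3}\cap\omega_{Q_3}\neq\emptyset$ via Fourier supports, and then invoke the sparse $i$-tree dichotomy (for $j=3\neq i$, either $\omega_{Q_3}=\omega_{Q^*_3}$ or $2\omega_{Q_3}\cap 2\omega_{Q^*_3}=\emptyset$) to conclude equality. This is the substantive half of the lemma and your argument is correct and essentially identical to the paper's.

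There is, however, a gap in your forward direction. You correctly note that $\vec Q$ itself witnesses $\vec P'\in\vec\P'$ and that $\langle\tilde\Phi_{\tilde P_1},\Phi_{Q_3}\rangle=2^{\#/2}\langle\Phi_{P_1},\Phi_{Q_3}\rangle$ by Plancherel. But the assertion that non-vanishing of $\langle\Phi_{P_1},\Phi_{Q_3}\rangle$ ``reduces to $\tfrac{9}{10}\omega_{P_1}\cap\tfrac{9}{10}\omega_{Q_3}\neq\emptyset$'' is not valid: overlap of Fourier supports is necessary but by no means sufficient for a nonzero inner product of two wave packets, which are only specified up to the qualitative bounds of Definition~4.8. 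The paper itself does not attempt to prove non-vanishing; it simply declares the forward implication ``obvious.'' In fact, for the purpose the lemma serves in Lemma~8.2 --- replacing the $\vec Q$-dependent summation range by the $T$-dependent set $\vec\P'$ --- one only needs that (i) the original constraints force $\vec P'\in\vec\P'$, and (ii) any $\vec P'\in\vec\P'$ with nonzero inner product satisfies the original constraints. Point~(i) is the trivial membership you already established, and point~(ii) is the reverse direction. The non-vanishing claim in the forward direction is neither provable at this level of generality nor needed; you should drop that part and state the forward direction as the paper does, namely as the immediate membership observation.
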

We remark that the collection $\vec \P'$ depends only on $\vec \P$ and $T \subset \vec \Q$, and not on each $\vec Q \in T$.
\vskip .5cm

\begin{proof}
Let $\vec Q \in T$ and $\vec P \in \vec \P$. And let $\vec P' \in \vec \P'$ such that $\langle \tilde \Phi_{\tilde P_1},  \Phi_{Q_3}\rangle \neq 0$. Then by definition of $\vec \P'$, we know that there exists a $\vec{{Q^*}} \in T$ such that 
$\omega_{Q^*_3}\subseteq \omega_{P_1}, \; 2^{\#}|\omega_{Q^*_3}| \sim |\omega_{P_1}|$
 and $\tilde{P_1}=I_{\vec P,{2^{\#}}} \times  \omega_{Q^*_3}$. 
Since $\langle \tilde \Phi_{\tilde P_1},  \Phi_{Q_3}\rangle \neq 0$, we have that $\omega_{Q^*_3} \cap \omega_{Q_3} \neq \emptyset$. Thus we obtain that $\omega_{Q^*_3}= \omega_{Q_3}$ because both $\vec Q$ and $\vec{{Q^*}}$ are in $T$ which is sparse $i$-tree for some $i=1,2$. Therefore, we have that $\omega_{Q_3}\subseteq \omega_{P_1}$ and $2^{\#}|\omega_{Q_3}| \sim |\omega_{P_1}|.$
This proves the ``if " part. Since the opposite implication is obvious, we have completed the proof.
\end{proof}
We will give the proof of the energy estimates.
\begin{lemma}
For $ j = 3,4$, let $E_j$ be sets of finite measure in $\R$ and $f_j$ be functions in $X(E_j)$.
Then, we have
$${\energy_3}((  a^{(3), \#}_{Q_3} )_{ \vec Q\in\vec \Q} ) \lesssim 
2^{\#/2}
\left( |E_4|^{1/2} \sup_{\vec P \in \vec \P} \frac{\int_{E_3}\tilde{ \chi}_{I_{\vec P}}^M} {|I_{\vec P}|} \right)^{1-\theta}
\left( |E_3|^{1/2} \sup_{\vec P \in \vec \P} \frac{\int_{E_4}\tilde{ \chi}_{I_{\vec P}}^M} {|I_{\vec P}|} \right)^{\theta}
$$
for any $0 < \theta < 1$ and $M > 0$, with the implicit constant depending on $\theta, M$. In particular, we have
$
{\energy_3}((  a^{(3), \#}_{Q_3} )_{ \vec Q\in\vec \Q} ) \lesssim 
2^{\#/2}
 |E_4|^{(1-\theta)/2} |E_3|^{\theta/2}
$
for any $0 < \theta < 1$, with the implicit constant depending on $\theta$.
\end{lemma}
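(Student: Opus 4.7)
The plan is to combine Lemma 7.5 (duality formula for the energy) with the same $\tilde\Phi_{\tilde P_1}$ substitution that was used in the proof of Lemma 8.2. By Lemma 7.5, we may pick an integer $n$, a collection $\T$ of strongly $3$-disjoint trees in $\vec\Q$, and coefficients $(c_{Q_3})$ satisfying $\sum_{\vec Q \in T'}|c_{Q_3}|^2 \lesssim |I_{T'}|/\sum_{T \in \T}|I_T|$ for every subtree $T' \subseteq T \in \T$, such that
$$\energy_3\big((a^{(3),\#}_{Q_3})_{\vec Q\in\vec\Q}\big) \;\sim\; \bigg|\sum_{T \in \T}\sum_{\vec Q \in T} a^{(3),\#}_{Q_3}\,\bar c_{Q_3}\bigg|.$$
Expanding the definition of $a^{(3),\#}_{Q_3}$, replacing $\langle\Phi_{P_1},\Phi_{Q_3}\rangle$ by $2^{-\#/2}\langle\tilde\Phi_{\tilde P_1},\Phi_{Q_3}\rangle$ as in Lemma 8.2, and invoking Lemma 8.3 to identify the resulting family of inner tri-tiles as $\vec\P'(T)$, the quantity above can be rewritten as
$$2^{-\#/2}\bigg|\sum_{T \in \T}\sum_{\vec P' \in \vec\P'(T)}\frac{\langle f_3,\Phi_{P_2}\rangle\,\langle f_4,\Phi_{P_3}\rangle}{|I_{\vec P}|^{1/2}}\big\langle\tilde\Phi_{\tilde P_1},\,g_T\big\rangle\bigg|,\qquad g_T := \sum_{\vec Q \in T}\bar c_{Q_3}\Phi_{Q_3}.$$

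To estimate this last expression I would proceed as in the computation $(\ref{ppp})$--$(\ref{23})$ in the proof of Lemma 8.2: bound $|\langle\tilde\Phi_{\tilde P_1},g_T\rangle|$ by $\int|\tilde\Phi_{\tilde P_1}|\cdot|g_T|$, using that $\tilde\Phi_{\tilde P_1}/(2^{\#/2}|I_{\vec P}|^{1/2})$ is $L^1$-normalized (this is precisely where the factor $2^{\#/2}$ appearing in the conclusion enters), then apply Cauchy--Schwarz in space to the resulting bilinear paraproduct in the discrete square functions $(\langle f_j,\Phi_{P_j}\rangle\,\tilde\chi_{I_{\vec P}}^{50M}/|I_{\vec P}|^{1/2})_{\vec P' \in \vec\P'(T)}$ for $j=2,3$, and use the lacunarity of $\{\omega_{P_2}\}_{\vec\P,T}$ and $\{\omega_{P_3}\}_{\vec\P,T}$ from Lemma 8.1 to recognize these as modulated Calder\'on--Zygmund square functions, hence $L^p$-bounded for all $1<p<\infty$. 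One then applies H\"older at conjugate exponents $1/(1-\theta)$ and $1/\theta$: interpreting $f_4$ in $L^2$ to extract the factor $|E_4|^{1/2}$ and $f_3$ in $L^\infty$ against the density $\sup_{\vec P}|I_{\vec P}|^{-1}\int_{E_3}\tilde\chi^M_{I_{\vec P}}$ (as in Lemma 7.7) for one endpoint, and swapping the roles of $f_3, f_4$ for the other endpoint, yields after interpolation the asymmetric geometric-mean bound of Lemma 8.4. The sum over $T \in \T$ is absorbed using the coefficient normalization $\sum_{\vec Q \in T}|c_{Q_3}|^2 \lesssim |I_T|/\sum_T|I_T|$ and the (essentially) orthogonal nature of the $g_T$ across different trees coming from strong $3$-disjointness, which together with Lemma 8.1 makes the combined family $\{g_T\}_{T\in\T}$ an $L^2$-type object of norm $\lesssim 1$ after the prefactor $(\sum_T|I_T|)^{1/2}$ in Lemma 7.5 has been cancelled.

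The principal obstacle is exactly the same as in Lemma 8.2: the family $\{\omega_{P_1}\}_{\vec\P,T}$ is non-lacunary by Lemma 8.1, which prevents the direct symmetric decoupling of $\vec P$ from $\vec Q$ used in MTT2 for the analogous form. The $\tilde\Phi_{\tilde P_1}$-bump substitution is the device that bypasses this at the cost of the factor $2^{\#/2}$, and Lemma 8.3 is what allows the residual inner sum to be packaged into a single sub-collection $\vec\P'(T)$ independent of each $\vec Q \in T$, so that the duality sum can be reorganized cleanly. A secondary technical point is tracking the asymmetric interpolation parameter $\theta$ through the bilinear H\"older step so that the $|E_j|^{1/2}$ factor remains paired with the density in $E_{j'}$ rather than with itself. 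The ``in particular'' statement is then an immediate consequence of the trivial bound $|I_{\vec P}|^{-1}\int_{E_j}\tilde\chi_{I_{\vec P}}^M \lesssim 1$.
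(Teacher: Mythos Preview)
Your approach differs substantially from the paper's, and there is a real gap.

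After invoking Lemma 7.5 you reorganize tree by tree and try to transplant the $\tilde\Phi_{\tilde P_1}$ substitution and Lemma 8.3 from the size proof (Lemma 8.2). The difficulty is that Lemma 8.3 packages the inner sum into a collection $\vec\P'(T)$ for \emph{one fixed} $i$-tree $T$ ($i=1,2$); for the energy you face \emph{many} strongly $3$-disjoint trees, and the collections $\vec\P'(T)$ can overlap heavily as $T$ ranges over $\T$. Your assertion that the sum over $T$ is ``absorbed'' by the coefficient normalization and near-orthogonality of $(g_T)_{T\in\T}$ is where the argument breaks: the square-function bounds you invoke for $f_3,f_4$ run over $\vec\P'(T)$ and therefore depend on $T$, and each such tree-by-tree estimate already carries a factor of order $|E_3|^{1-\theta}|E_4|^{\theta}$ with no residual smallness in $T$; there is no mechanism offered for summing these over an arbitrarily large $\T$. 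Strong $3$-disjointness controls the $\vec Q$ side, not the repetition of $\vec P$'s across the $\vec\P'(T)$. Incidentally, your bookkeeping for the factor $2^{\#/2}$ is also off: in your scheme the $2^{-\#/2}$ from the substitution and the $2^{\#/2}$ from $\|\tilde\Phi_{\tilde P_1}\|_1$ cancel, so the stated loss would have to come from a step you have not identified.

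The paper takes the opposite route. It reverses summation once more, writing
\[
\sum_{T\in\T}\sum_{\vec Q\in T} a^{(3),\#}_{Q_3}\,\bar c_{Q_3}
=\sum_{\vec P\in\vec\P}\frac{1}{|I_{\vec P}|^{1/2}}\,b^{(1)}_{P_1}\,\langle f_3,\Phi_{P_2}\rangle\,\langle f_4,\Phi_{P_3}\rangle,
\qquad
b^{(1)}_{P_1}:=\sum_{T}\sum_{\substack{\vec Q\in T:\ \omega_{Q_3}\subset\omega_{P_1}\\ 2^{\#}|\omega_{Q_3}|\sim|\omega_{P_1}|}}\langle\Phi_{P_1},\,c_{Q_3}\Phi_{Q_3}\rangle,
\]
and then applies Proposition 7.6 to the $\vec P$ sum with $\theta_1=0$, so that only $\energy_1\big((b^{(1)}_{P_1})_{\vec P\in\vec\P}\big)$ needs to be controlled. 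That energy is handled by a \emph{second} application of Lemma 7.5 (producing strongly $1$-disjoint trees $\T'$ and coefficients $d_{P_1}$) followed by a direct combinatorial count: using $|\langle\Phi_{P_1},\Phi_{Q_3}\rangle|\lesssim 2^{-\#/2}\big(1+\dist(I_{\vec Q},I_{\vec P})/|I_{\vec Q}|\big)^{-100}$ and the cardinality bound $\#\A^k_{\vec P,\T}\lesssim 2^{k}\cdot 2^{\#}$, Cauchy--Schwarz yields $\energy_1\lesssim 2^{\#/2}$. Neither the $\tilde\Phi_{\tilde P_1}$ device nor Lemma 8.3 is used in the paper's proof of this lemma; they are specific to the single-tree size estimate.
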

\begin{proof}
By Lemma 7.5, it suffices to show that 
\begin{equation}\label{24}
| \sum_{T \in \T} \sum_{\vec Q \in T} a^{(3), \#}_{Q_3} \bar{c}_{Q_3} |
 \lesssim
 2^{\#/2}
  \left( |E_4|^{1/2} \sup_{\vec P \in \vec \P} \frac{\int_{E_3}\tilde{ \chi}_{I_{\vec P}}^M} {|I_{\vec P}|} \right)^{1-\theta}
\left( |E_3|^{1/2} \sup_{\vec P \in \vec \P} \frac{\int_{E_4}\tilde{ \chi}_{I_{\vec P}}^M} {|I_{\vec P}|} \right)^{\theta}
\end{equation}
for all collections $\T$ of strongly 3-disjoint trees and all coefficients $c_{Q_3}$ such that
$$\sum_{\vec Q \in \tilde T} |c_{Q_3}|^2 \lesssim \frac{|I_{\tilde T}|} {\sum_{T \in \T} |I_T|}$$
for all $T \in \T$ and all sub-trees $\tilde T \subseteq T$.
Now fix $\T$ and $c_{Q_3}$. Then,
\begin{align}
| \sum_{T \in \T} \sum_{\vec Q \in T} a^{(3), \#}_{Q_3} \bar{c}_{Q_3} | &=
|\sum_T \sum_{\vec Q}
\sum_{\vec{P}\in\vec{\P}; \; \omega_{Q_3} \subset \omega_{P_1}   \atop   2^\#| \omega_{Q_3}| \sim|\omega_{P_1}| }
\frac{1}{|I_{\vec{P}}|^{1/2}}
\langle f_3, \Phi_{P_2} \rangle
\langle f_4, \Phi_{P_3} \rangle
\langle \Phi_{P_1},\; {\Phi}_{Q_3} \rangle
\bar{c}_{Q_3} | \nonumber\\
&= |\sum_{\vec{P}\in\vec{\P}} 
\frac{1}{|I_{\vec{P}}|^{1/2}}
b_{P_1}^{(1)}
\langle f_3, \Phi_{P_2} \rangle
\langle f_4, \Phi_{P_3} \rangle|,\label{25}
\end{align}
where $$b_{P_1}^{(1)}:=
 \sum_T \sum_{\vec Q \in T; \; \omega_{Q_3} \subset \omega_{P_1}   \atop   2^\#| \omega_{Q_3}| \sim|\omega_{P_1}| }
\langle \Phi_{P_1},\; c_{Q_3}{\Phi}_{Q_3} \rangle.
$$
Then, by Proposition 7.6, (\ref{25}) can be majorized by
\begin{align}
\size_1&((b_{P_1}^{(1)})_{ \vec P\in \vec \P} )^{\theta_1}
{\energy}_1((b_{P_1}^{(1)} )_{ \vec P\in \vec \P} )^{1-\theta_1}
\cdot \size_2(\langle f_3, \Phi_{P_2} \rangle_{ \vec P\in \vec \P} )^{\theta_2}
\cdot\nonumber\\
&{\energy}_2(\langle f_3, \Phi_{P_2} \rangle_{ \vec P\in \vec \P} )^{1-\theta_2}
\size_3(\langle f_4, \Phi_{P_3} \rangle_{ \vec P\in \vec \P} )^{\theta_3}
{\energy}_3(\langle f_4, \Phi_{P_3} \rangle_{ \vec P\in \vec \P} )^{1-\theta_3} \label{26}
\end{align}
for any $0 \leq \theta_1, \theta_2, \theta_3< 1$ with $\theta_1+\theta_2+\theta_3=1$.
Furthermore, by Lemma 7.7 and Lemma 7.8, we obtain that (\ref{26}) can be majorized by
\begin{align}
\size_1((b_{P_1}^{(1)})_{ \vec P\in \vec \P} )^{\theta_1}
{\energy}_1((b_{P_1}^{(1)} )_{ \vec P\in \vec \P} )^{1-\theta_1}
  \left( \sup_{\vec P \in \vec \P} \frac{\int_{E_3}\tilde{ \chi}_{I_{\vec P}}^M} {|I_{\vec P}|} \right)^{\theta_2}
\left( \sup_{\vec P \in \vec \P} \frac{\int_{E_4}\tilde{ \chi}_{I_{\vec P}}^M} {|I_{\vec P}|} \right)^{\theta_3}\|f_3\|_2^{1-\theta_2} \|f_4\|_2^{1-\theta_3}.\label{27}
\end{align}
By setting $\theta_1=0$ and $\theta_3:=\theta$, we have that (\ref{27}) is equal to
$$
{\energy}_1((b_{P_1}^{(1)} )_{ \vec P\in \vec \P} )
 \left( \sup_{\vec P \in \vec \P} \frac{\int_{E_3}\tilde{ \chi}_{I_{\vec P}}^M} {|I_{\vec P}|} \right)^{1-\theta}
\left( \sup_{\vec P \in \vec \P} \frac{\int_{E_4}\tilde{ \chi}_{I_{\vec P}}^M} {|I_{\vec P}|} \right)^{\theta}
\|f_3\|_2^{\theta} \|f_4\|_2^{1-\theta} $$
$$\lesssim
{\energy}_1((b_{P_1}^{(1)} )_{ \vec P\in \vec \P} )
 \left( |E_4|^{1/2} \sup_{\vec P \in \vec \P} \frac{\int_{E_3}\tilde{ \chi}_{I_{\vec P}}^M} {|I_{\vec P}|} \right)^{1-\theta}
\left( |E_3|^{1/2} \sup_{\vec P \in \vec \P} \frac{\int_{E_4}\tilde{ \chi}_{I_{\vec P}}^M} {|I_{\vec P}|} \right)^{\theta}.
$$
So, to establish (\ref{24}), we are left to show that 
$${\energy}_1((b_{P_1}^{(1)} )_{ \vec P\in \vec \P} ) \lesssim 
 2^{\#/2}.
$$
By Lemma 7.5 again, we obtain that there exist a collection $\T'$ of strongly 1-disjoint trees and a complex coefficient $d_{P_1}$, for all $\vec P \in \bigcup_{T' \in \T'} T'$, such that 
$${\energy}_1((b_{P_1}^{(1)} )_{ \vec P\in \vec \P} ) \sim 
| \sum_{T' \in \T'} \sum_{\vec P \in T'} b_{P_1} \bar{d_{P_1}} | $$
and such that 
$$\sum_{\vec P \in \tilde T''} |d_{P_1}|^2 \lesssim \frac{|I_{ T''}|} {\sum_{T' \in \T'} |I_{T'}|}$$
for all $T' \in \T' $ and all sub-trees $T'' \subseteq T'$  of $T'$.

Then, we reduce to showing that 
\begin{equation}\label{28}
| \sum_{T' \in \T'} \sum_{\vec P \in T'} b_{P_1} \bar{d_{P_1}} | =
| \sum_{T' \in \T'} \sum_{\vec P \in T'}  \sum_{T \in \T} \sum_{\vec Q \in T; \; \omega_{Q_3} \subset \omega_{P_1}   \atop   2^\#| \omega_{Q_3}| \sim|\omega_{P_1}| }
c_{Q_3} d_{P_1} 
\langle  \Phi_{P_1} {\Phi}_{Q_3} \rangle|
\lesssim
2^{{\#}/2}.
\end{equation}
From the decay of the ${\Phi}_{Q_3}$ we have
$$|\langle  \Phi_{P_1} {\Phi}_{Q_3} \rangle|
\lesssim
 \frac { |I_{\vec P}|^{1/2} }{ |I_{\vec Q}|^{1/2} } \left( 1+\frac {\dist(I_{\vec Q}, I_{\vec P})} {|I_{\vec Q}|} \right)^{-100}$$
 $$\lesssim 2^{{-\#}/2} 2^{-100k}, $$
 if we assume $\dist(I_{\vec Q}, I_{\vec P}) \sim 2^k |I_{\vec Q}|$ for some $k \in \Z$.
 
 Now, choose any $T' \in \T'$ and pick any $\vec P \in T'$. Then, we construct sets $\A_{\vec P, \T}^k$ defined by
 $$\A_{\vec P, \T}^k := \{ \vec Q \in \bigcup_{T \in \T} T : \omega_{Q_3} \subset \omega_{P_1} ,\:   2^\#| \omega_{Q_3}| \sim|\omega_{P_1}| \; {\rm{and}} \; \dist(I_{\vec Q}, I_{\vec P}) \sim 2^k |I_{\vec Q}| \},$$
 for $k \geq 1$.
 Then, we can see that there is a bounded number $C_{\#, k}$ for $\vec Q$ satisfying all assumptions in the set for each $\vec P \in T'$. More specifically, we establish that $C_{\#, k} \lesssim 2^k \cdot 2^{\#}$.
 
 Hence, using the Cauchy-Schwarz inequality, we can estimate the corresponding piece of (\ref{28}) by
\begin{align}
 &| \sum_{T' \in \T'} \sum_{\vec P \in T'} \sum_{ \vec Q \in \A_{\vec P, \T}^k}
c_{Q_3} d_{P_1} 
\langle  \Phi_{P_1} {\Phi}_{Q_3} \rangle|\nonumber\\
\lesssim &\;\;
C_{\#, k} 2^{{-\#}/2} 2^{-100k}  (\sum_{T' \in \T'} \sum_{\vec P \in T'}  |d_{P_1}|^2)^{1/2} (\sum_{T \in \T} \sum_{\vec Q \in T}  |c_{Q_3}|^2)^{1/2}\nonumber\\
\lesssim & \;\;2^{{\#}/2} 2^{-99k}. 
\end{align}
Summing over $k \geq 1$, we have
$$| \sum_{T' \in \T'} \sum_{\vec P \in T'}  \sum_{T \in \T} \sum_{\vec Q \in T; \; \omega_{Q_3} \subset \omega_{P_1}  \;    2^\#| \omega_{Q_3}| \sim|\omega_{P_1}| \atop \dist(I_{\vec P}, I_{\vec Q}) \gtrsim |I_{\vec Q}|}
c_{Q_3} d_{P_1} 
\langle  \Phi_{P_1} {\Phi}_{Q_3} \rangle|
\lesssim
2^{{\#}/2}.$$
If $\dist(I_{\vec P}, I_{\vec Q}) \lesssim |I_{\vec Q}|$, then we have $I_{\vec P} \subseteq 3 I_{\vec Q}$ and thus
$$ |\langle  \Phi_{P_1} {\Phi}_{Q_3} \rangle| \lesssim 2^{{-\#}/2}.$$
Hence we can estimate the corresponding piece of (\ref{28}) by
$$| \sum_{T' \in \T'} \sum_{\vec P \in T'}  \sum_{T \in \T} \sum_{\vec Q \in T; \; \omega_{Q_3} \subset \omega_{P_1}  \;    2^\#| \omega_{Q_3}| \sim|\omega_{P_1}| \atop \dist(I_{\vec P}, I_{\vec Q}) \leq |I_{\vec Q}|}
c_{Q_3} d_{P_1} 
\langle  \Phi_{P_1} {\Phi}_{Q_3} \rangle|
\lesssim
3 \cdot 2^{\#} 2^{-{\#}/2} \lesssim 2^{{\#}/2}.$$
This completes the proof of the energy estimates.
\end{proof}

\vskip 1cm

\section{Proof of Theorem 3.5 }

We now use the estimates from the previous two sections to prove the main theorem. We have few cases to consider depending on the bad index of each vertex $A_i$, $\tilde{A_i}$ for $1<i<12$. Here, we only consider the case of vertex $A_i$ because the other case can be done analogously.

\underline{\emph{\textbf{(Case 1: vertices with a bad index 1)}}}:  the case of bad index 2 can be dealt with similarly.

Fix an admissible tuple $\alpha= (\alpha_1, \alpha_2, \alpha_3, \alpha_4)$ near $A_i$ for some $9 \leq i \leq 12$.  Fix also the finite collections $\vec \P$, $\vec \Q$ of tri-tiles and an arbitrary tuple $(E_1,E_2,E_3,E_4)$ of subsets of $\R$ with
finite measure. We need  to find a major subset $E'_1$ of $E_1$ such that
$$|\Lambda^{\#}_{\vec \P, \vec \Q} (f_1, f_2, f_3, f_4)| \lesssim 2^{\#/2}|E_1|^{\alpha_1} |E_2|^{\alpha_2}|E_3|^{\alpha_3}|E_4|^{\alpha_4}$$
for all tuple $(f_1, f_2, f_3, f_4)$ of functions with $f_1 \in X(E'_1)$ and $f_i \in X(E_i)$, $i=2,3,4$.

Define the exceptional set $\Omega_{C_1}$ by
$$ \Omega_{C_1}:= \bigcup_{j=1}^4 \{M_{\chi_{E_j} }> C_1|E_j|/|E_1|\}$$
for a large constant $C_1$, where $M$ is the dyadic Hardy-Littlewood maximal function. By the classical Hardy-Littlewood inequality, we have $|\Omega_{C_1}| < |E_1|/2$ if ${C_1}$ is sufficiently large. Thus, if we set $E'_1 := E_1\setminus \Omega_{C_1}$ for a sufficiently large constant, then $E'_1$ is a major subset of $E_1$. Now, we shall show that, for fixed $f_1 \in X(E'_1)$ and $f_i \in X(E_i)$, 
 for $i = 2, 3, 4$, we establish the estimate
 \begin{equation}\label{29}
 | \sum_{\vec Q \in \vec\Q} \frac {1}{|I_{\vec Q}|^{1/2}}
a^{(1)}_{Q_1}a^{(2)}_{Q_2}a^{(3), \#}_{Q_3}| 
\lesssim 2^{\#/2} |E_1|^{\alpha_1} |E_2|^{\alpha_2}|E_3|^{\alpha_3}|E_4|^{\alpha_4},
\end{equation}
where the $a^{(1)}_{Q_1}$, $a^{(2)}_{Q_2}$ and $a^{(3),\#}_{Q_3}$ are defined by (\ref{tiles}).

We first consider a decomposition of $ \vec\Q$ as  $\bigcup_{k\geq0}  \vec\Q_k$, where, for each $k \geq 0$, the finite collection $\vec\Q_k$ of tri-tiles is given by $$\vec\Q_k:=\{\vec Q \in \vec\Q : 1 +\frac{\dist(I_{\vec Q} , \R \setminus \Omega_{C_1})} {| I_{\vec Q}|} \sim 2^k \}.$$

We will establish such corresponding estimate of $\vec\Q_k$ as (\ref{29}) with an additional factor of $2^{-k}$ on the right-hand side, which enables us to complete our proof after summing over $k$.

Fix $k\geq0$. For any $\vec Q \in \vec\Q_k$, we have
$$\frac{\int_{E'_1} \tilde \chi_{I_{\vec Q}}^{M_1} } {|I_{\vec Q}|} \lesssim 2^{-({M_1}+C_1)k},$$
and
$$\frac{\int_{E_j} \tilde \chi_{I_{\vec Q}}^{M_1} } {|I_{\vec Q}|} \lesssim 2^k \frac {|E_j|}{|E_1|}$$
for $j = 2, 3, 4$ and for a sufficiently big number ${M_1}$.

By Lemma 7.7 and Lemma 8.2 we thus have that
\beq
\size_1((a^{(1)}_{Q_1})_{ \vec Q\in \vec \Q_k}) &\lesssim& 2^{-({M_1}+C_1)k}\\
\size_2((a^{(2)}_{Q_2})_{ \vec Q\in \vec \Q_k}) &\lesssim& 2^k \frac {|E_2|}{|E_1|}\\
\size_3((a^{(3),\#}_{Q_3})_{ \vec Q\in \vec \Q_k}) &\lesssim& 2^k \frac {|E_3|^{1-\theta}|E_4|^{\theta}}{|E_1|}
\eeq
for some $0<\theta <1$. Similarly, by Lemma 7.8, Lemma 8.4, we have
\beq
{\energy_1}((  a^{(1)}_{Q_1} )_{ \vec Q\in\vec \Q_k} )&\lesssim& {|E_1|}^{1/2}\\
{\energy_2}((  a^{(2)}_{Q_2} )_{ \vec Q\in\vec \Q_k} )&\lesssim& {|E_2|}^{1/2}\\
{\energy_3}((  a^{(3), \#}_{Q_3} )_{ \vec Q\in\vec \Q_k} ) &\lesssim& 
2^{\#/2}{|E_3|}^{(1-\theta)/2}{|E_4|}^{\theta/2}\eeq

By Proposition 7.6, for a sufficiently large $M_1$, we establish that
$$| \sum_{\vec Q \in \vec\Q_k} \frac {1}{|I_{\vec Q}|^{1/2}}
a^{(1)}_{Q_1}a^{(2)}_{Q_2}a^{(3), \#}_{Q_3}| 
\lesssim  2^{\#/2}2^{-k} \frac 
{|E_1|^{(1+\theta_1)/2}{|E_2|}^{(1+\theta_2)/2}({|E_3|}^{(1-\theta)/2}{|E_4|}^{\theta/2})^{(1+\theta_3)/2} }
{|E_1|}
$$
for $0< \theta_1, \theta_2, \theta_3<1$ with $ \theta_1+ \theta_2+ \theta_3=1$. By choosing $ \theta_1:=2\alpha_1+1,  \theta_2:=2\alpha_2-1,  \theta_3:=2(\alpha_3+\alpha_4)-1,$ and $\theta:=\frac{\alpha_4}{(\alpha_3+\alpha_4)}$, we obtain the corresponding estimates for $\vec\Q_k$ in (\ref{29}).
\vskip .5cm
\underline{\emph{\textbf{(Case 2: vertices with a bad index 4)}}}:  the bad index 3 case can be dealt with similarly.

Fix an admissible tuple $\alpha= (\alpha_1, \alpha_2, \alpha_3, \alpha_4)$ near $A_i$ for some $i=1,2$.  Fix also the finite collections $\vec \P$, $\vec \Q$ of tri-tiles and an arbitrary tuple $(E_1,E_2,E_3,E_4)$ of subsets of $\R$ with
finite measure. 
As before, define the exceptional set $\Omega_{C_2}$ by
$$ \Omega_{C_2}:= \bigcup_{j=1}^4 \{M_{\chi_{E_j} }> C_2|E_j|/|E_4|\}$$
for a constant $C_2$, and set $E'_4 := E_4\setminus \Omega_{C_2}$. Then $E_4'$ is a major subset of $E_4$, if ${C_2}$ is sufficiently large. Now, we will show that for fixed $f_4 \in X(E'_4)$ and $f_i \in X(E_i)$, 
 for $i = 1, 2, 3$, we establish the estimate
 \begin{equation}\label{30}
 | \sum_{\vec Q \in \vec\Q} \frac {1}{|I_{\vec Q}|^{1/2}}
a^{(1)}_{Q_1}a^{(2)}_{Q_2}a^{(3),\#}_{Q_3}| 
\lesssim 2^{\#/2} |E_1|^{\alpha_1} |E_2|^{\alpha_2}|E_3|^{\alpha_3}|E_4|^{\alpha_4},
\end{equation}
where the $a^{(1)}_{Q_1}$, $a^{(2)}_{Q_2}$ and $a^{(3),\#}_{Q_3}$ are given by (\ref{tiles}).

We now obtain a decomposition of $ \vec\Q$ as  $\bigcup_{k\geq0}  \vec\Q_k$, where the finite collection $\vec\Q_k$ of tri-tiles is given by $$\vec\Q_k:=\{\vec Q \in \vec\Q : 1 +\frac{\dist(I_{\vec Q} , \R \setminus \Omega_{C_2})} {| I_{\vec Q}|} \sim 2^k \},$$
for each $k \geq 0$, and similarly obtain a decomposition of $ \vec\P$ as  $\bigcup_{k'\geq0}  \vec\P_{k'}$, where the finite collection $\vec\P_{k'}$ of tri-tiles is given by $$\vec\P_{k'}:=\{\vec P \in \vec\P : 1 +\frac{\dist(I_{\vec P} , \R \setminus \Omega_{C_2})} {| I_{\vec P}|} \sim 2^{k'} \},$$
for each $k' \geq 0$.
We will establish such corresponding estimate of $\vec\Q_k$ and $\vec\P_{k'}$ as (\ref{30}) with an additional factor of $2^{-k-k'}$ on the right-hand side, which enables us to complete our proof after summing over $k$ and $k'$.
 
 Fix $k\geq0$. For any $\vec Q \in \vec\Q_k$, we have
 $$\frac{\int_{E'_4} \tilde \chi_{I_{\vec Q}}^{M_2} } {|I_{\vec Q}|} \lesssim 2^{-({M_2}+C_2)k}$$
$$\frac{\int_{E_j} \tilde \chi_{I_{\vec Q}}^{M_2} } {|I_{\vec Q}|} \lesssim 2^k \frac {|E_j|}{|E_4|}$$
for $j =1, 2, 3, $ and for a sufficiently big number ${M_2}$.

By Lemma 7.7 and Lemma 8.2 we have
$$\size_1((a^{(1)}_{Q_1})_{ \vec Q\in \vec \Q_k}) \lesssim 2^k \frac {|E_1|}{|E_4|}$$
$$\size_2((a^{(2)}_{Q_2})_{ \vec Q\in \vec \Q_k}) \lesssim 2^k \frac {|E_2|}{|E_4|}$$
$$\size_3((a^{(3),\#}_{Q_3})_{ \vec Q\in \vec \Q_k}) \lesssim 2^{-(M_2+C_2)k}$$
for some $0<\theta <1$. Similarly, by Lemma 7.8, Lemma 8.4, we have
$${\energy_1}((  a^{(1)}_{Q_1} )_{ \vec Q\in\vec \Q_k} )\lesssim {|E_1|}^{1/2}$$ 
$${\energy_2}((  a^{(2)}_{Q_2} )_{ \vec Q\in\vec \Q_k} )\lesssim {|E_2|}^{1/2}.$$ 

Now, fix $k'\geq0$. For any $\vec P \in \vec\P_{k'}$, we obtain that
$$\frac{\int_{E_3} \tilde \chi_{I_{\vec P}}^{M_2} } {|I_{\vec P}|} \lesssim 2^{k'} \frac {|E_3|}{|E_4|}$$
$$\frac{\int_{E'_4} \tilde \chi_{I_{\vec P}}^{M_2} } {|I_{\vec P}|} \lesssim 2^{-({M_2}+C_2){k'}}.$$
By Lemma 8.4 we thus have
$${\energy_3}((  a^{(3), \#}_{Q_3} )_{ \vec Q\in\vec \Q_k}, \vec P \in \vec\P_{k'} ) \lesssim 
2^{\#/2} 2^{-(M_2+C_2)k'}{|E_3|}^{(2-\theta)/2}{|E_4|}^{{(\theta-1)}/2}$$ 
for some $0<\theta<1$ to be chosen later.

By Proposition 7.6, for a sufficiently large $M_2$, we establish that
$$| \sum_{\vec Q \in \vec\Q_k, \vec P \in \vec\P_{k'}} \frac {1}{|I_{\vec Q}|^{1/2}}
a^{(1)}_{Q_1}a^{(2)}_{Q_2}a^{(3), \#}_{Q_3}| 
\lesssim
2^{\#/2}2^{-k-k'} \frac 
{|E_1|^{(1+\theta_1)/2}{|E_2|}^{(1+\theta_2)/2}}{|E_4|^{1-\theta_3}}
({|E_3|}^{(2-\theta)/2}{|E_4|}^{{(\theta-1)}/2})^{1-\theta_3}
$$
for $0< \theta_1, \theta_2, \theta_3<1$ with $ \theta_1+ \theta_2+ \theta_3=1$. By setting $ \theta_1:=2\alpha_1-1,  \theta_2:=2\alpha_2-1,  \theta_3:=2(\alpha_3+\alpha_4)-1,$ and $\theta:=\frac{(3\alpha_3+2\alpha_4)}{\alpha_3+\alpha_4}$, we obtain the corresponding estimates for $\vec\Q_k$ and $\vec\P_{k'}$ in (\ref{30}).

\vskip .1cm









\end{document}